\documentclass[11pt]{amsart} 
\usepackage[english]{babel}
\usepackage{mathtools}
\usepackage{stmaryrd}
\usepackage{amssymb,amscd, graphics,color,latexsym,cancel, booktabs}   
\usepackage[linktoc=all, pagebackref, hyperindex]{hyperref}%
\hypersetup{colorlinks,  citecolor=blue,  filecolor=blue,  linkcolor=blue,  urlcolor=black}
\usepackage{multirow}
\usepackage{comment}
\usepackage{extarrows}

\usepackage{tikz-cd} 
\usepackage{tikz}
\usetikzlibrary{matrix,calc}
\usepackage{mathrsfs}
\usepackage[all]{xy}
\usepackage{amsfonts}
\usepackage[normalem]{ulem}
\usepackage{euscript}
\usepackage{amsmath}
\usepackage{xcolor}
\usepackage{ifpdf}
\usepackage{cleveref}
\newcommand\restr[2]{{
  \left.\kern-\nulldelimiterspace 
  #1 
  \vphantom{\big|} 
  \right|_{#2} 
}}

\newcommand{\lar}{\longrightarrow}

\newtheorem{Theorem}{Theorem}[section]
\newtheorem{Lemma}[Theorem]{Lemma}
\newtheorem{Corollary}[Theorem]{Corollary}
\newtheorem{Proposition}[Theorem]{Proposition}

\theoremstyle{definition}
\newtheorem{Remark}[Theorem]{Remark}
\newtheorem{Example}[Theorem]{Example}

\newtheorem{Question}[Theorem]{Question}
\def\sqr#1#2{{\vcenter{\hrule height.#2pt
		\hbox{\vrule width.#2pt height#1pt \kern#1pt
		\vrule width.#2pt}
		\hrule height.#2pt}}}
\def\phi{\varphi}

\def\VaVa{{\mathcal V}\kern-5pt {\mathcal V}}
\def\gr#1#2{{\rm gr}\, _{#1}(#2)}
\def\gr{{\rm gr}\,}
\def\hht{{\rm ht}\,}
\def\depth{{\rm depth}\,}

\def\Min{{\rm Min}\,}
\def\codim{{\rm codim}\,}

\def\ker{{\rm ker}\,}
\def\grade{{\rm grade}\,}
\def\rk{\rm rank}

\def\syz{\mbox{\rm Syz}}

\def\coker{\mbox{\rm coker}}

\def\Ext#1#2#3#4{{\rm Ext}\,^{#1}_{#2}({#3},{#4})}

\def\supp#1{{\rm Supp}\, (#1)}

\def\ini{\mbox{\rm in}}

\def\der#1{\mbox{\rm der}_k(#1)}

\def\cl#1{{\mathcal #1}}

\def\ev{{\rm ev}}
\def\phi{\varphi}

\def\hht{{\rm ht}\,}
\def\grade{{\rm grade}\,}

\def\cl#1{{\cal #1}}
\def\rk{\rm rank}

\def\ch{\text{ch}}
\newcommand{\excise}[1]{}

\def\NZQ{\mathbb}               

\def\PP{{\NZQ P}}

\def\G{{\mathcal G}}

\def\T{{\mathcal T}}

\def\opn#1#2{\def#1{\operatorname{#2}}} 
\opn\chara{char} \opn\length{\lambda} \opn\pd{pd} \opn\rk{rk}
\opn\projdim{proj\,dim} \opn\injdim{inj\,dim} \opn\rank{rank}
\opn\depth{depth} \opn\grade{grade} \opn\height{height}
\opn\embdim{emb\,dim} \opn\codim{codim}

\opn\Tr{Tr} \opn\bigrank{big\,rank}
\opn\superheight{superheight}\opn\lcm{lcm}
\opn\trdeg{tr\,deg}
	\opn\reg{reg} \opn\lreg{lreg} \opn\ini{in} \opn\lpd{lpd}
	\opn\size{size} \opn\sdepth{sdepth}
	\opn\link{link}\opn\fdepth{fdepth}\opn\lex{lex}
	\opn\tr{tr}
	\opn\type{type}
    \opn\Num{Num}   \opn\Str{Str}
	\opn\div{div} \opn\Div{Div} \opn\cl{cl} \opn\Cl{Cl}
	\opn\Spec{Spec} \opn\Supp{Supp} \opn\supp{supp} \opn\Sing{Sing}
	\opn\Ass{Ass} \opn\Min{Min}\opn\Mon{Mon}
	\opn\Ho{H}  \opn\indeg{indeg} \opn\Hs{HS} \opn\Eig{E}
	\opn\Ann{Ann} \opn\Rad{Rad} \opn\Soc{Soc}  \opn\der{Der}
	\opn\Bour{Bour} \opn\proj{Proj}
    \opn\Sym{Sym}
    \opn\Lie{Lie}
	\opn\Im{Im} \opn\Ker{Ker} \opn\Coker{Coker} \opn\Am{Am}
    \opn\im{im}
	\opn\Hom{Hom} \opn\Tor{Tor} \opn\Ext{Ext} \opn\End{End}
	\opn\Aut{Aut} \opn\id{id}
	
	\opn\nat{nat}
	\opn\pff{pf}
	\opn\Pf{Pf} \opn\GL{GL} \opn\SL{SL} \opn\mod{mod} \opn\ord{ord}
	\opn\Gin{Gin} \opn\Hilb{HP}\opn\sort{sort}
	\opn\PF{PF}\opn\Ap{Ap}\opn\sat{sat}
    \opn\PGL{PGL}
    \opn\Coh{Coh}
    \opn\op{op}
	\opn\aff{aff} \opn
	\con{conv} \opn\relint{relint} \opn\st{st}
	\opn\lk{lk} \opn\cn{cn} \opn\core{core} \opn\vol{vol}  \opn\inp{inp} \opn\nilpot{nilpot}
	\opn\link{link} \opn\star{star}\opn\lex{lex}\opn\set{set}
	\opn\width{wd}
	\opn\Fr{F}
	\opn\QF{QF}
	\opn\G{G}
    \opn\Id{Id}
	\opn\type{type}\opn\res{res}
	\opn\log{Log}
    \def\Trip{{\bf Trip}}
    \def\Set{{\bf Set}}
    \def\Sch{{\bf Sch}}
    
    \def\HilbertF{{\bf Hilb}}
    \def\Hilbert{{\rm Hilb}}
    
    \def\QuotF{{\bf Quot}}
    \def\Quot{{\rm Quot}}

	\opn\gr{gr}   
	
    \opn\sm{{\rm sm}}
	\def\pot#1#2{#1[\kern-0.28ex[#2]\kern-0.28ex]}

	\opn\dirlim{\underrightarrow{\lim}}
	\opn\inivlim{\underleftarrow{\lim}}
\begin{document}
		
		\title[Geometry of three--generated ideals on the plane]{Geometry of three--generated ideals on the plane} 
		\author{Felipe Monteiro}

        \address{
        IMECC, University of Campinas (UNICAMP),
        Campinas, SP, CEP 13083-872, Brazil
        }
        
        \address{
        Universit\'e Bourgogne Europe,
        CNRS, IMB UMR 5584,
        F-21000 Dijon, France
        }
        \email{ffcmonteiro00@gmail.com}
		
		\subjclass[2020]{Primary 13D02, 14C05;Secondary 14D20, 14J60, 14H20, 14H50}

        \keywords{
        three-generated ideals, Bourbaki degree, syzygy bundles,
        Quot schemes, Hilbert schemes, semistability,
        du Plessis--Wall bounds, gradient ideals, plane curves
        }

		\begin{abstract}
		We study the geometry of parameter spaces of homogeneous ideals in $k[x,y,z]$ minimally generated by three forms of the same degree, stratified by the degree of their base scheme and by the initial degree of their syzygies. We realize these strata through Quot and Hilbert schemes, construct universal families of Bourbaki schemes, and prove smoothness and irreducibility results for the free loci. We then compare these spaces with the loci of gradient ideals and plane curves. Using slope semistability of the associated syzygy bundles, we give a vector-bundle interpretation of the refined du Plessis--Wall bounds and obtain the same numerical restriction for arbitrary three-generated ideals. Finally, in the case of triples of cubic forms, we determine all nonempty degree strata, prove that they are smooth and irreducible, and identify their gradient loci in terms of the Bourbaki stratification of reduced quartic plane curves.
		\end{abstract}
	\maketitle
    		
    	\begingroup
            \tableofcontents
        \endgroup
		
\section*{Introduction}

Three-generated homogeneous ideals provide a natural first setting
in which the geometry of rank-two syzygy modules exhibits
nontrivial behavior. In a companion work, written jointly with M.~Jardim,
A.~Nasrollah Nejad, and Z.~Ramos~\cite{JMNRS2026}, we developed
a numerical theory for three-generated homogeneous ideals. In
particular, we studied the interaction among the initial degree of the syzygy module, the codimension-two degree of the ideal, and the
Bourbaki degree, and introduced the distinction between numerical,
structural, dimensional, and integrable gaps.

The purpose of the present work is to develop the geometric and
moduli-theoretic side of this picture in the case of $R = k[x,y,z]$, where $k$ is algebraically closed of characteristic zero. Rather than asking only which pairs of numerical invariants can occur, we study how the ideals
realizing a fixed pair sit inside their natural parameter spaces,
how these loci vary in families, and how the associated syzygy and
Bourbaki constructions interact with Hilbert schemes, Quot schemes,
and moduli spaces of sheaves.

Let \(W_d=H^0(\PP^2,\mathcal O_{\PP^2}(d))^{\oplus3}\simeq R_d^3\). We consider
the open subset
\[
U_d\subset\PP(W_d)
\]
parametrizing triples \([J]=[f_1:f_2:f_3]\) which are linearly independent and have no common divisorial factor.
For such a triple, write \(
Z(J)=V(f_1,f_2,f_3)\subset\PP^2\), \(j=\deg Z(J)\) and let \(e=\indeg(\syz(J))\) be the minimal degree of a nonzero syzygy of $J$.

The corresponding syzygy bundle $\T_J \doteq \widetilde{\syz(J)}$ sits in the short exact sequence
\[
0\longrightarrow
\T_J
\longrightarrow
\mathcal O_{\PP^2}^{\oplus3}
\longrightarrow
\mathcal I_{Z(J)}(d)
\longrightarrow0.
\]
The \emph{Bourbaki degree formula} from \cite{JMNRS2026} takes the form
\[
\Bour(J)=e(e-d)+d^2-j.
\]
Thus the two numerical invariants $(e,j)$ determine the Bourbaki
degree, and the natural geometric pieces of $U_d$ are
\[
Z_j
=
\{[J]\in U_d:\deg Z(J)=j\} \text{ and }I_e^j=\{[J]\in Z_j:\indeg(\syz(J))=e\}.
\]
A distinguished linear subspace of $\PP(W_d)$ is the gradient locus
\[
G_d
=
\{[J_F]\doteq[\partial_xF:\partial_yF:\partial_zF]:
F\in R_{d+1}\}.
\]
When $F$ is reduced and not a cone, \(
j=\deg V(J_F)=\tau(F)\), the global Tjurina number. Hence intersecting the strata $I_e^j$
with $G_d$ translates the realizability problem for arbitrary triples into the corresponding integrability problem for plane curves.

We now summarize the main results.

\medskip
\noindent
\textbf{Theorem A
(= Theorems~\ref{thm:strata},
\ref{thm:open-Z-j-quot-scheme},
\ref{thm:hilb-map-j}, and
\ref{thm:morphism-Bourbaki-scheme}).}
The loci $Z_j$ and $I_e^j$ are locally closed subschemes of
$\PP(W_d)$ and give finite stratifications of $U_d$. The stratum
$Z_j$ is naturally an open subscheme of a Quot scheme, and at a
point $[J]\in Z_j$, with $Z=Z(J)$,
\[
T_{[J]}Z_j
\simeq
\Hom(\T_J,\mathcal I_Z(d)),
\]
while \(\Ext^1(\T_J,\mathcal I_Z(d))
\) is an obstruction space.

There is also a natural morphism
\[
\Phi_j:Z_j\longrightarrow\Hilbert^j(\PP^2),
\qquad
[J]\longmapsto Z(J),
\]
whose fibers are open subsets of \(\PP\bigl(H^0(\mathcal I_Z(d))^{\oplus3}\bigr)\).

The vanishing \(
H^1(\mathcal I_Z(d))=0\) implies smoothness of both $Z_j$ and $\Phi_j$ at the corresponding
point. In particular, whenever nonempty, the strata $Z_j$ are smooth
and irreducible for $j\leq d$.

Finally, on the reduced open locus of $I_e^j$ where
$h^0(\T_J(e))$ is minimal, minimal syzygies form a projective bundle
and the Bourbaki construction gives a natural morphism
\[
\PP(\mathcal E_e^j)
\longrightarrow
\Hilbert^{\,e(e-d)+d^2-j}(\PP^2)
\]
corresponding to the association of the Bourbaki scheme from a given pair of an ideal $J$ and a minimal degree syzygy $\nu \in \syz(J)_e$.

\medskip
\noindent
\textbf{Theorem B
(= Proposition~\ref{prop:free-strata-smooth},
Corollary~\ref{cor:free-points-picture}, and
Proposition~\ref{prop:dimension-bound-free-families}).}
For
\[
1\leq e\leq\left\lfloor\frac d2\right\rfloor,
\qquad
j=d(d-e)+e^2,
\]
the stratum $I_e^j$ parametrizes the free triples with
\[
\T_J
\simeq
\mathcal O_{\PP^2}(-e)
\oplus
\mathcal O_{\PP^2}(e-d).
\]
It is smooth and irreducible, is open in $Z_j$, and its closure is
an irreducible component of $Z_j$. Moreover,
\[
\dim I_e^j
=
\begin{cases}
d^2-de+e^2+3d+3e+3,
& e<d/2,\\[1mm]
\dfrac34d^2+\dfrac92d+2,
& e=d/2.
\end{cases}
\]
The free locus need not be closed in its component: for $d=6$ we
construct a degeneration from $I_3^{27}$ to $I_2^{27}$ (Example~\ref{ex:1-par-family-free-degeneration}).

We obtain expected-dimension bounds for the intersections of the
free strata with the gradient and arrangement loci, and use them to
exhibit systematic failures of transversality. In Remark~\ref{rmk:known-free-families-of-divisors} we explore some families of known divisors and how they fit into the present framework.

\medskip
\noindent
\textbf{Theorem C
(= Lemma~\ref{lem:mu-semistability},
Proposition~\ref{prop:semistable-refined-bound}, and
Proposition~\ref{prop:morphism-gieseker}).}
The syzygy bundle $\T_J$ is slope-semistable if and only if
\[
e\geq\left\lceil\frac d2\right\rceil,
\]
and slope-stable if and only if $e>d/2$. In the semistable range,
Riemann--Roch and the vanishings
\[
H^0(\T_J(e-1))
=
H^2(\T_J(e-1))
=
0
\]
give
\[
j
\leq
d(d-e)+e^2-\binom{2e-d+1}{2}.
\]
For gradient ideals with $e>d/2$, this is precisely the refined
du Plessis--Wall bound. Thus its correction term admits a
vector-bundle interpretation in terms of slope semistability and
the cohomology of the syzygy bundle. In the stable range, the
universal syzygy family also gives a morphism to the corresponding
Gieseker moduli space.

\medskip
\noindent
\textbf{Theorem D
(= Theorems~\ref{quartic-Bour-strata} and
\ref{thm:strata-d-3}).}
For triples of cubic forms, the nonempty degree strata are precisely \(Z_0,\ldots,Z_7\). They are smooth and irreducible, with
\[
\dim Z_j=29-j,
\]
and $G_3$ meets each $Z_j$ properly. Combining this with Wall's
classification of reduced quartics and the numerical classification
of~\cite[Theorem~4.1]{JMNRS2026}, we obtain the corresponding
Bourbaki stratification.

\medskip

The results above give a geometric framework for the three levels of
realizability emphasized in \cite{JMNRS2026}:
\[
\text{numerically admissible}
\supseteq
\text{realizable by a triple}
\supseteq
\text{realizable by a gradient triple}.
\]
The strata $I_e^j$ encode the middle level, while their intersections
with $G_d$ encode the last one. From this point of view, integrable
gaps arise when a nonempty stratum $I_e^j$ fails to meet $G_d$.

The paper is organized as follows.
Section~\ref{sec:framework} recalls the numerical framework and notations from \cite{JMNRS2026}.
Section~\ref{sec:constructible} constructs the basic parameter spaces
and their stratifications.
Section~\ref{sec:local-geometry} realizes the fixed-degree strata inside Quot schemes and studies their local geometry.
Section~\ref{sec:hilbert-map} introduces the morphisms to Hilbert
schemes of points.
Section~\ref{sec:Bourbaki-map} develops the universal Bourbaki
construction and the geometry of free strata.
Section~\ref{sec:gradient-gaps} studies gradient triples, free
divisors, and integrable gaps.
Section~\ref{sec:semistable-syzygies} develops the stability approach
and its relation with the refined du Plessis--Wall bounds.
Finally, Section~\ref{sec:quartic-strata} treats triples of cubics and
the Bourbaki stratification of quartic plane curves.

Throughout the paper, $k$ is an algebraically closed field of
characteristic zero. We denote by $\Sch_k$ the category of locally
Noetherian $k$-schemes, and all functors of points are defined on
$\Sch_k^{\mathrm{op}}$ unless stated otherwise.
        
\subsection*{Acknowledgments}
The author is supported by the S\~ao Paulo Research Foundation
(FAPESP), Brazil, grant \#2021/10550-4. The author is currently a
Ph.D. student under the supervision of Marcos Jardim and Daniele
Faenzi. The author thanks Marcos Jardim, Daniele Faenzi,
Abbas Nasrollah Nejad, and Zaqueu Ramos for fruitful discussions and
suggestions concerning this work.

\section{Numerical framework and notation}\label{sec:framework}

Let $R=k[x,y,z]$ be the standard graded polynomial ring over $k$, and fix an integer $d\geq 1$. We denote by $
W_d \doteq H^0(\mathcal{O}_{\mathbb{P}^2}(d))^{\oplus 3} \simeq R_d^3$ the parameter space of triples of homogeneous polynomials $(f_1, f_2, f_3)$ of degree $d$. The associated  syzygy graded $R$-module $\syz(J)$ is adopted in the following grading convention:
$$
0 \lar \syz(J) \lar R^3 \xlongrightarrow{(f_1, f_2, f_3)} J(d) \lar 0,
$$
and we denote by $e = \indeg(\syz(J))$ the minimal degree for non-trivial syzygies of $J$.

Our main objects of study are homogeneous ideals minimally generated
by three forms of the same degree and satisfying the conditions
below, with particular emphasis on the height-two almost complete
intersection case. We follow the conventions and numerical framework
developed in~\cite{JMNRS2026}.

\begin{itemize}
    \item[(a)] The ideals $J = (f_1, f_2, f_3)$ are assumed to satisfy $\gcd(f_1, f_2, f_3) = 1$, meaning the triple is primitive in the sense that it cannot be written in the form
    $$
    (f_1, f_2, f_3) = g(g_1, g_2, g_3)
    $$ with $\deg(g) > 0$. This also implies that $\hht(J) \geq 2$, so the associated closed projective subscheme $Z = V(J) \subseteq \mathbb{P}^2$ is zero-dimensional (possibly empty).
    \item[(b)] The ideals $J=(f_1,f_2,f_3)$ are assumed to be \emph{minimally generated}; equivalently, $\{f_1,f_2,f_3\}\subset R_d$ is $k$-linearly independent, or $\syz(J)_0=0$. For ideals satisfying~$(a)$, one has
    $$
    \syz(J)_0 \neq 0 \iff \syz(J) \simeq R \oplus R(-d).
    $$
    We also call this the \emph{compressible case}, since one of the generators may be removed without changing the ideal.
\end{itemize}

For ideals satisfying~(a) and~(b), after choosing a nonzero
minimal-degree syzygy $\nu\in\syz(J)_e$, one obtains a short exact
sequence
\[
0\longrightarrow
R(-e)
\xlongrightarrow{\nu}
\syz(J)
\longrightarrow
I_\nu(e-d)
\longrightarrow0,
\]
where $I_\nu\subset R$ is a homogeneous ideal whose projective zero
scheme is zero-dimensional, possibly empty. Moreover,
$I_\nu=R$ if and only if
\[
\syz(J)\simeq R(-e)\oplus R(e-d).
\]
The degree $\deg(R/I_\nu)$ is independent of the choice of the
minimal-degree syzygy $\nu$ and is called the \emph{Bourbaki degree}
of $J$. By \cite[Theorem~1.2]{JMNRS2026},
\begin{equation}\label{eq:Bourbaki-deg-formula}
\Bour(J)=e(e-d)+d^2-\deg(R/J).
\end{equation}
Here $\deg(R/J)$ denotes the codimension-two degree: it is the usual
degree when $\height(J)=2$, and by convention it is zero when $\height(J)=3$. Equivalently,
\[
\deg(R/J)=\deg V(J),
\]
where the empty projective scheme is assigned degree zero.

The formulation \eqref{eq:Bourbaki-deg-formula} generalizes the notion for projective plane curves present in \cite{MAA}. The Bourbaki degree may be viewed as a numerical gadget to measure the failure of freeness of the syzygy module, since
$$
\Bour(J) = 0 \iff \syz(J) \simeq R(-e)\oplus R(e-d).
$$
By \cite[Theorem~2.1]{JMNRS2026}, \(\Bour(J)\leq e^2\), and consequently
\[
d(d-e)\leq \deg(R/J)\leq d(d-e)+e^2,
\]
bounds that generalize the classical du--Plessis and Wall bounds (\cite[Theorem 3.2]{CTC-Plessis}) for the global Tjurina number of a reduced projective plane curve. In particular, they introduce \emph{numerically admissible ranges}
$$
j=\deg(R/J) \in [d(d-e), d(d-e)+e^2]
$$
with $\indeg(\syz(J))=e$. Following the terminology introduced in \cite{JMNRS2026}, a pair $(e,j)$ satisfying $j \in [d(d-e), d(d-e)+e^2]$ is said to be \emph{realizable} for a given degree $d$ if there is a degree $d$ triple $J = (f_1, f_2, f_3)$ such that
$$
(\indeg(\syz(J)), \deg(R/J))=(e,j).
$$

\section{The constructible sets of fixed Bourbaki degree}\label{sec:constructible}

In this section, we establish the parameter spaces relevant to our
study. Related parameter spaces for gradient ideals were previously
considered in~\cite{Futata2023}.

Retain the notation introduced in Section~\ref{sec:framework}. For \([f]=[f_1:f_2:f_3]\in\PP(W_d)\), we write
\[
J_{[f]}\doteq(f_1,f_2,f_3)\subset R.
\]
The condition \(\gcd(f_1,f_2,f_3)=1\) is equivalent to the absence of a common divisorial component and
implies $\height(J_{[f]})\geq2$. Thus the corresponding projective
base scheme
\[
Z_{[f]}\doteq V(J_{[f]})\subset\PP^2
\]
is zero-dimensional, possibly empty.

The first result of the section is to show that the space of triples $U_d \subset \mathbb{P}(W_d)$ satisfying both conditions above is open (see Proposition~\ref{prop:parameter-space-open}). Afterwards, we describe the two closed subsets of triples of interest (see Proposition~\ref{prop:parameter-space-gradient}): the locus of \emph{gradient triples}, that is, triples $[f] = [\partial_x F : \partial_y F : \partial_z F]$ for some homogeneous polynomial $F \in R_{d+1}$, and the locus of gradient triples coming from \emph{line arrangements}, when $F = h_1 \cdot \cdots \cdot h_{d+1}$ is a product of linear forms. 

The main result of the section is Theorem~\ref{thm:strata}, where we stratify the subset $U_d$ into two locally closed stratifications: one by fixing the degree $\deg(R/J_{[f]})$, and another stratification within the latter fixing the initial degree for syzygies. We also produce a functorial description of the open subset $U_d$ and of the strata of fixed degree $\deg(R/J_{[f]})$ (see Lemma~\ref{lem:functors-of-triples}). At the end, we discuss with examples our choice of stratification and flatness notions for triples (see Section~\ref{subsec:different-flatness}).

\subsection{Spaces of three-generated ideals}

\begin{Proposition}\label{prop:parameter-space-open}
For each $d \geq 1$, the subset
$$
U_d = \{[f] \in \mathbb{P}(W_d) : \hht(J_{[f]}) \geq 2\text{ and }J_{[f]}\text{ is minimally generated}\}
$$
is a dense open subset of $\mathbb{P}(W_d)$, so it has dimension $3 \binom{d+2}{2}-1$.
\end{Proposition}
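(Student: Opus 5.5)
I will write $U_d$ as the intersection of two open subsets of $\PP(W_d)$, and then show it is nonempty. Irreducibility of $\PP(W_d)$ then gives density. The dimension count is immediate: $\dim W_d = 3\binom{d+2}{2}$, so $\dim\PP(W_d)=3\binom{d+2}{2}-1$, and a dense open subset has the same dimension.

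\emph{Minimal generation.} A triple is minimally generated exactly when $f_1,f_2,f_3$ are $k$-linearly independent in $R_d$. In coordinates, $[f]$ is a $3\times\binom{d+2}{2}$ coefficient matrix, defined up to a scalar. Linear dependence is the vanishing of all $3\times3$ minors, which are homogeneous polynomials in the coordinates of $W_d$. So the locus $U'$ of linearly independent triples is the complement of a closed set, hence open. When $d=0$ this locus would be empty, but we assume $d\ge1$, so $\dim R_d\ge3$ and $U'$ is nonempty.

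\emph{Height $\ge 2$.} For a nonzero triple, $\hht(J_{[f]})\le1$ holds exactly when $f_1,f_2,f_3$ have a common factor $g$ of degree $a$ with $1\le a\le d$. For each such $a$, consider the multiplication morphism
\[
\mu_a:\PP(R_a)\times\PP(R_{d-a}^{\oplus3})\longrightarrow\PP(W_d),\qquad ([g],[g_1:g_2:g_3])\longmapsto[gg_1:gg_2:gg_3].
\]
It is well defined because $R$ is a domain, so $gg_i$ are not all zero. The source is projective, so the image of $\mu_a$ is closed. The complement of $U''=\{\hht\ge2\}$ is the finite union $\bigcup_{a=1}^d\operatorname{im}\mu_a$, which is closed, so $U''$ is open. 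This properness argument is the only point needing care, and it is standard.

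\emph{Nonemptiness.} I exhibit an explicit point of $U'\cap U''$, namely $(x^d,y^d,z^d)$. These monomials are linearly independent, and they have no common factor; in fact $V(x^d,y^d,z^d)=\emptyset$, so the ideal has height $3$. Since $\PP(W_d)$ is irreducible, the nonempty open set $U_d=U'\cap U''$ is dense, and the dimension statement follows.
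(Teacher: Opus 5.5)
Your proof is correct and follows the paper's argument. Both proofs split $U_d$ into the gcd-one locus and the linear-independence locus, show the first is open as the complement of the closed images of the multiplication maps $\PP(R_a)\times\PP(R_{d-a}^{\oplus 3})\to\PP(W_d)$, and cut out the second by $3\times 3$ minors. The only difference is in nonemptiness: the paper shows each image is a proper closed subset by a dimension count and uses $[x^d:y^d:z^d]$ only for the linear-independence locus, whereas you use that single point to show $U_d$ is nonempty directly, so the dimension estimate is not needed.
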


\begin{proof}
Let us consider the following two subsets:
\begin{align*}
U &\doteq \{[f] \in \mathbb{P}(W_d) : \gcd(f_1,f_2,f_3) = 1\},\\
V &\doteq \{[f] \in \mathbb{P}(W_d) : \dim_k \langle f_1, f_2, f_3 \rangle = 3\}.
\end{align*}
Since $U_d = U \cap V$, it suffices to show $U$ and $V$ are open. For each $1 \leq k \leq d$, there is a proper morphism
\begin{align*}
F_k : \mathbb{P}(R_k) \times \mathbb{P}(R_{d-k}^3) &\rightarrow \mathbb{P}(W_d)\\
( [g], [(h_1, h_2, h_3)] )&\longmapsto [(g h_1, g h_2, g h_3)]
\end{align*}
induced by the corresponding $k$-bilinear map, so its image is the closed subset inside $\mathbb{P}(W_d)$ where the three polynomials have a common factor of degree $k \geq 1$. Then
$$
U = \{[f_1: f_2: f_3] \in \mathbb{P}(W_d) : \gcd(f_i) = 1\} = \mathbb{P}(W_d) \setminus \left(\bigcup_{k=1}^{d} \text{im } F_k \right).
$$
For each $1 \leq k \leq d$, one has 
\begin{align*}
\dim(\text{im }F_k) &\leq \dim \mathbb{P}(R_k) + \dim(\mathbb{P}(R_{d-k}^3))\\
&= \binom{k+2}{2}-1 + 3 \binom{d-k+2}{2}-1,
\end{align*}
and, on the other hand, a direct computation gives
\begin{align*}
\dim(\mathbb{P}(W_d)) - \dim(\text{im }F_k) &\geq 3 \binom{d+2}{2} - \binom{k+2}{2}- 3\binom{d-k+2}{2} + 1\\
&\geq k(3d - 2k + 3)\\
&\geq k(3d-2d+3) = k(d+3) > 0,
\end{align*}
so each \(\operatorname{im}(F_k)\subset\PP(W_d)\) is a proper closed subset. Since $\PP(W_d)$ is irreducible and only
finitely many values of $k$ occur,
\[
\bigcup_{k=1}^d\operatorname{im}(F_k)
\]
is a proper closed subset. Hence $U$ is dense and open.

To treat $V$, fix a basis of $R_d$ and write $f_1,f_2,f_3$ in this
basis. Set
\[
L_d
\doteq
\left\{
[f]\in\PP(W_d):
f_1\wedge f_2\wedge f_3=0
\right\}.
\]
This is a determinantal closed subset of $\PP(W_d)$. It is proper,
since
\[
[x^d:y^d:z^d]\notin L_d.
\]
Therefore \(V=\PP(W_d)\setminus L_d\) is dense and open. Hence \(U_d=U\cap V\) is dense and open in $\PP(W_d)$.
\end{proof}

The next result describes two important families inside the parameter space $\mathbb{P}(W_d)$.

\begin{Proposition}\label{prop:parameter-space-gradient}
For each $d \geq 1$, we have:
\begin{itemize}
    \item[(a)] The locus of gradient triples
    $$
    G_d \doteq \{[f_1:f_2:f_3] \in \mathbb{P}(W_d) : \exists \ F \in R_{d+1} , \ \nabla F = (f_1, f_2, f_3)\}
    $$
    is a linear closed subset, of dimension $\binom{d+3}{2}-1$.

    \item[(b)] The locus of triples coming from line arrangements
    $$
    A_d \doteq \{[\nabla F] \in G_d : F = h_1 \cdots h_{d+1}, \ \deg(h_i) = 1\}
    $$
    is a closed subset of dimension $2d+2$.
\end{itemize}
\end{Proposition}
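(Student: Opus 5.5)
For (a), I would use the linear map
\[
\nabla: R_{d+1}\longrightarrow W_d,\qquad F\longmapsto(\partial_xF,\partial_yF,\partial_zF).
\]
Since $\chara k=0$, Euler's formula $(d+1)F=x\partial_xF+y\partial_yF+z\partial_zF$ shows that $\nabla$ is injective. Its image is therefore a linear subspace of dimension $\dim R_{d+1}=\binom{d+3}{2}$. By definition, $G_d$ is the projectivization of this image (the zero triple is excluded because $\nabla F=0$ forces $F=0$). Hence $G_d$ is a linear, in particular closed, subspace of $\PP(W_d)$ of dimension $\binom{d+3}{2}-1$.

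For (b), I would consider the multiplication morphism
\[
\mu:(\PP^{2\vee})^{d+1}\longrightarrow \PP(R_{d+1}),\qquad ([h_1],\dots,[h_{d+1}])\longmapsto [h_1\cdots h_{d+1}].
\]
It is induced by a multilinear map and is well defined, since $R$ is a domain and so the product is never zero. The source is projective, so $\mu$ is proper and its image $X$ is closed. By unique factorization in $R$, every fiber of $\mu$ is finite: it consists of orderings of the linear factors of the product, so it has at most $(d+1)!$ points. Since the source is irreducible, $X$ is an irreducible closed subvariety of dimension $2(d+1)=2d+2$.

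The projectivized map $\PP(\nabla):\PP(R_{d+1})\to G_d$ is an isomorphism of projective spaces, being induced by an injective linear map onto the linear span of $G_d$. Then $A_d=\PP(\nabla)(X)$ is closed in $G_d$, hence closed in $\PP(W_d)$, and $\dim A_d=\dim X=2d+2$.

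The only points needing care are the nonvanishing of the product and the finiteness of the fibers of $\mu$, which is what makes the dimension exactly $2d+2$ and not merely at most $2d+2$. Both follow from unique factorization, so I do not expect a real obstacle. Note that $A_d$ is not required to lie in $U_d$: nonreduced products such as $h^{d+1}$ are allowed. This is consistent with taking the closed image of $\mu$, which is exactly the set described in the statement.
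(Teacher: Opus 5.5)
Your proposal is correct and follows essentially the same route as the paper: injectivity of $\nabla$ via the Euler relation for (a), and for (b) a proper morphism from $(\PP^2)^{d+1}$ with finite (permutation) fibers, composed with the linear embedding $\overline{\nabla}_d$. The only cosmetic difference is that the paper builds your multiplication map $\mu$ as the Segre embedding followed by the projection $R_1^{\otimes(d+1)}\to\Sym^{d+1}(R_1)$, and checks that the Segre variety avoids the base locus of that projection; this is the same nonvanishing-of-products argument you give.
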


\begin{proof}
To show $(a)$, we use the linearity of the gradient construction to induce a morphism between projective spaces: 
\begin{align*}
\overline{\nabla}_d: \mathbb{P}(R_{d+1}) &\lar \mathbb{P}(W_d)\\
[F]&\lar [\nabla F]=[\partial_x F: \partial_y F: \partial_z F]
\end{align*}
which embeds $\mathbb{P}(R_{d+1})$ linearly in $\mathbb{P}(W_d)$, since $\nabla_d$ is an injective $k$-linear map: for $F \in R_{d+1}$ such that $\nabla F = 0$, one gets $\partial_x F = \partial_y F = \partial_z F = 0$, and from the Euler relation we obtain
$$
(d+1) F = x \partial_x F + y \partial_y F + z \partial_z F = 0,
$$
so $\ker(\nabla_d) = \{0\}$.

For $(b)$, note that $\mathbb{P}(R_1) \simeq \mathbb{P}^2$, and we may form the following commutative diagram
\begin{center}
\begin{tikzcd} (\mathbb{P}^2)^{d+1} \arrow[r, "\sigma", hook] \arrow[rd, "\varphi"'] & \mathbb{P}\big((R_1)^{\otimes (d+1)}\big) \arrow[d, dashed, "\pi"] \\ & \mathbb{P}\big(\Sym^{d+1}(R_1)\big) = \mathbb{P}(R_{d+1}) 
\end{tikzcd}
\end{center}
where $\sigma$ is the Segre embedding and $\pi$ is the rational map induced by the projection $R_1^{\otimes d+1} \twoheadrightarrow \Sym^{d+1}(R_1)$. We claim that $\varphi \doteq \pi \circ \sigma$ defines a morphism of projective varieties. Indeed, the Segre variety $\text{im }\sigma$ consists of completely decomposable tensors, and the base locus of $\pi$ is composed of non-zero tensors which have zero symmetrization: for $v = v_1 \otimes \ldots \otimes v_{d+1}$ where each $v_i \in R_1 \setminus \{0\}$, its symmetrization is
$$
\Sym(v) = \sum_{\tau \in S_{d+1}} v_{\tau(1)} \cdots v_{\tau(d+1)},
$$
which is, up to a non-zero scalar, the product of $v_1 \cdots v_{d+1}$, hence non-zero since $R$ is an integral domain. Therefore, the image of $\sigma$ is disjoint from the base locus of $\pi$, and $\varphi$ is a morphism. The composition
$$
(\mathbb{P}^2)^{d+1} \xlongrightarrow{\varphi} \mathbb{P}(R_{d+1}) \xlongrightarrow{\overline{\nabla}_d} \mathbb{P}(W_d)
$$
has image $A_d$, hence it is a closed subscheme. Moreover, the fibers of the composition above are finite, given by permutations (indeed, the map $\varphi$ descends to $\Sym^{d+1}(\mathbb{P}^2)$), so that
$$
\dim(A_d) = \dim((\mathbb{P}^2)^{d+1}) = 2d+2,
$$
as claimed.
\end{proof}

In the literature, a \emph{line arrangement} usually refers to an element of the intersection $U_d \cap A_d$, that is, a completely decomposable polynomial
$$
F = h_1 \cdot \cdots \cdot h_{d+1}, \quad \deg(h_i) = 1,
$$
which is assumed to be reduced. 

The next result describes stratifications in $U_d$ related to the Bourbaki construction.

\begin{Theorem}\label{thm:strata}
For $d \geq 1$, the following hold:

\begin{enumerate}
\item[(i)] For every $j\geq 0$, the subsets 
$$
Z_j = \{[f] = [f_1:f_2:f_3] \in U_d : \deg(R/J_{[f]}) = j\}
$$
are locally closed for $j \geq 1$ in $\mathbb{P}(W_d)$, open for $j = 0$, and $U_d = \bigsqcup_{j \geq 0} Z_j$.

\item[(ii)] The subsets
$$
I_e^j \doteq \{[f] = [f_1 : f_2 : f_3] \in Z_j : \indeg(\syz(J_{[f]})) = e\}
$$
are locally closed inside $\mathbb{P}(W_d)$ and $Z_j = \bigsqcup_{1 \leq e \leq d} I_e^j$. Within these strata, the Bourbaki degree is constant. 

\item[(iii)] For $b\geq0$ and $j\geq0$, set
\begin{align*}
B_{d,b,j}
&\doteq
\left\{
[f]\in Z_j:
\Bour(J_{[f]})=b
\right\},\\
B_{d,b,j}^{(e)}
&\doteq
\left\{
[f]\in Z_j:
\Bour(J_{[f]})=b,\ 
\indeg(\syz(J_{[f]}))=e
\right\}.
\end{align*}
Then
\[
B_{d,b}
\doteq
\left\{
[f]\in U_d:\Bour(J_{[f]})=b
\right\}
=
\bigsqcup_{j=0}^{d^2}
\bigsqcup_{e=1}^{d}
B_{d,b,j}^{(e)}.
\]
In particular, $B_{d,b}$ is constructible in $\PP(W_d)$.
\end{enumerate}

The same statements hold after restricting to gradient triples or
line arrangements, by intersecting respectively with
$G_d\subset\PP(W_d)$ or $A_d\subset\PP(W_d)$.
\end{Theorem}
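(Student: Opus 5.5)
The plan is to deduce local closedness from semicontinuity of Hilbert functions over $U_d$, using the universal triple. Write $S=\PP(W_d)$, let $\mathcal J\subset\mathcal O_{S\times\PP^2}$ be the ideal sheaf generated by the three universal forms, and let $\mathcal Z\subset U_d\times\PP^2$ be the universal base scheme. By Proposition~\ref{prop:parameter-space-open}, $U_d$ is open, so it suffices to prove local closedness inside $U_d$.

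\textbf{(i).} The projection $\pi:\mathcal Z\to U_d$ is projective, and each fiber $Z_{[f]}$ is zero-dimensional or empty. Hence $\pi$ is quasi-finite and projective, so it is finite, and $\pi_*\mathcal O_{\mathcal Z}$ is a coherent sheaf whose fiber dimension at $[f]$ equals $\deg Z_{[f]}=j$ (since $\pi$ is finite, $H^0$ commutes with base change). By upper semicontinuity of fiber dimension, the sets $\{j\ge m\}$ are closed. Therefore $Z_j=\{\ge j\}\setminus\{\ge j+1\}$ is locally closed, and $Z_0=\{\le 0\}$ is open. The degree is bounded by $d^2$ by B\'ezout, or by the bound $j\le d(d-e)+e^2$ of Section~\ref{sec:framework}. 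This gives the finite disjoint union.

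\textbf{(ii).} The condition $\indeg\syz(J_{[f]})\le e$ says that the multiplication map
\[
\mu_{e}:R_e^{3}\to R_{d+e},\qquad (a_1,a_2,a_3)\mapsto \textstyle\sum a_if_i
\]
has nontrivial kernel, i.e.\ $\rank\mu_e<3\dim R_e$. The entries of $\mu_e$ depend linearly on $f$, so this is a determinantal condition and defines a closed subset. Hence $\{\indeg\le e\}$ is closed, and $I_e^j=Z_j\cap(\{\le e\}\setminus\{\le e-1\})$ is locally closed. The range $1\le e\le d$ holds because $e\ge1$ by minimal generation and the Koszul syzygy has degree $d$. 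By the formula \eqref{eq:Bourbaki-deg-formula}, $\Bour=e(e-d)+d^2-j$ is constant on $I_e^j$.

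\textbf{(iii).} For fixed $b$, $B_{d,b,j}^{(e)}$ equals $I_e^j$ when $b=e(e-d)+d^2-j$ and is empty otherwise. So $B_{d,b}$ is a finite union of locally closed sets, hence constructible, and the decomposition follows from (i) and (ii). For $G_d$ and $A_d$, which are closed by Proposition~\ref{prop:parameter-space-gradient}, intersecting a locally closed set with a closed set gives a locally closed set, so all statements restrict.

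The main obstacle is (i): one must justify that $\pi$ is finite and that $\deg Z_{[f]}$ is computed by the fiber rank of $\pi_*\mathcal O_{\mathcal Z}$, so that upper semicontinuity applies. Note that $\mathcal Z$ need not be flat over $U_d$; this is exactly why the strata are locally closed rather than open.
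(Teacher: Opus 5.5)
Your proof is correct. Part (iii) matches the paper, but (i) and (ii) take a genuinely different route.

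For (i), the paper applies Mumford's flattening stratification to the universal cokernel $\mathcal Q$ on $\PP^2\times U_d$. You instead push $\mathcal O_{\mathcal Z}$ forward along the finite map $\pi$ and use only upper semicontinuity of the fiber rank of the coherent sheaf $\pi_*\mathcal O_{\mathcal Z}$. Your version is more elementary and makes openness of $Z_0$ immediate. However, it produces the strata only as sets. The paper's version gives each $Z_j$ the scheme structure over which the base-scheme family is flat. That structure is what the paper uses afterwards to build the flat universal syzygy sheaf $\T_S$, to represent $\Trip_{d,j}$, and to embed $Z_j$ in the Quot scheme. In your setup the same structure would come from the Fitting ideals of $\pi_*\mathcal O_{\mathcal Z}$, since flatness of a finite family amounts to local freeness of this pushforward.

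For (ii), the paper works inside each $Z_j$, using flatness of $\T_S$ and upper semicontinuity of $h^0(\T_{[f]}(e))$. This relies on (i) and on the identification $H^0(\T_J(e))\simeq\syz(J)_e$. Your argument via the minors of $\mu_e$ needs neither. It shows that $\indeg$ is lower semicontinuous on all of $U_d$, not just on each $Z_j$, which strengthens Lemma~\ref{lem:indeg-lowersemicontinuous}. It is also exactly the determinantal description the paper adopts right after the proof to put a scheme structure on $I_e^j$.

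The one step you leave implicit is that $\syz(J)_l\neq0$ for some $l\leq e$ forces $\syz(J)_e\neq0$. This is immediate, since multiplying a degree-$l$ syzygy by $x^{e-l}$ gives a nonzero degree-$e$ syzygy, because $\syz(J)\subset R^3$ is torsion-free.
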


\begin{proof}
Let $X_d \doteq \mathbb{P}^2 \times U_d \subseteq \mathbb{P}^2 \times \mathbb{P}(W_d)$ and denote the projections by:
\begin{center}
\begin{tikzcd}
             & X_d \arrow[ld, "p"'] \arrow[rd, "q"] &     \\
\mathbb{P}^2 &                                      & U_d
\end{tikzcd}
\end{center}
As usual, we denote:
$$
\mathcal{E} \boxtimes \mathcal{F} \doteq p^*( \mathcal{E}) \otimes q^*( \mathcal{F} ),
$$
for $\mathcal{E} \in \Coh(\mathbb{P}^2)$ and $\mathcal{F} \in \Coh(U_d)$. There is a morphism of sheaves on $X_d$ given by
$$
\sigma : \mathcal{O}_{\mathbb{P}^2}^{\oplus 3} \boxtimes \mathcal{O}_{U_d} \rightarrow \mathcal{O}_{\mathbb{P}^2}(d) \boxtimes \mathcal{O}_{U_d}(1) \doteq \mathcal{L},
$$
which is the expansion over a basis of $W_d$ given by monomials in $x,y,z$ of the map 
$
(f_1, f_2, f_3): \mathcal{O}_{\mathbb{P}^2}^{\oplus 3} \lar \mathcal{O}_{\mathbb{P}^2}(d)
$ with varying $f_1, f_2$ and $f_3$. This morphism has a cokernel sheaf
$$
\mathcal{O}_{\mathbb{P}^2}^{\oplus 3} \boxtimes \mathcal{O}_{U_d} \xlongrightarrow{\sigma} \mathcal{O}_{\mathbb{P}^2}(d) \boxtimes \mathcal{O}_{U_d}(1) \lar \mathcal{O}_T(d,1) \doteq \mathcal{Q} \lar 0
$$
defined by a closed subscheme $T \subset X_d$. For each $[f] \in U_d$, we denote the restriction of the morphism $\sigma$ to $P^2 \times \{[f]\} \simeq P^2$ by $\sigma_{[f]}$, which corresponds to 
$$
\sigma_{[f]} = (f_1,f_2,f_3):\mathcal{O}_{\mathbb{P}^2}^{\oplus 3} \lar \mathcal{O}_{\mathbb{P}^2}(d),
$$
hence
$$
\restr {\coker(\sigma)} {\mathbb{P}^2 \times {[f]}} \simeq \coker(\restr {\sigma} {\mathbb{P}^2 \times \Spec k}) \simeq \mathcal{O}_{Z[f]}(d),
$$
where $Z[f] = V(J_{[f]}) \subset \mathbb{P}^2$.

Considering the flattening stratification (see, for example, \cite[Chapter 8]{mumford1966lectures}) for the coherent sheaf $\mathcal{Q}$ over $X_d \twoheadrightarrow U_d$, we obtain a finite set $\{Z_j \subset U_d\}$ of locally closed subschemes satisfying:
\begin{itemize}
    \item[(a)] For each $[f] \in Z_j$, the Hilbert polynomial of each fiber 
    $$
    \Hilb(\restr {\mathcal{O}_T(d,1)} {\mathbb{P}^2 \times [f]}, t) = \Hilb(\mathcal{O}_{Z[f]}, t) = j
    $$
    is constant;
    \item[(b)] Each restriction $\restr {\mathcal{O}_T(d,1)} {\mathbb{P}^2 \times Z_j}$ is flat over $Z_j$;
    \item[(c)] $U_d = \bigsqcup_{j \geq 0} Z_j$.
\end{itemize}
This shows ${\rm (i)}$.

For ${\rm (ii)}$, let us fix $j \geq 0$. For the inclusion $i : S \doteq Z_j \hookrightarrow U_d$ there is an associated base-change map
$$
\iota = \id_{\mathbb{P}^2} \times i : \mathbb{P}^2_S \lar \mathbb{P}^2 \times U_d = X_d,
$$
so we define
$$
\mathcal{L}_S \doteq \iota^* \mathcal{L}, \quad \sigma_S \doteq \iota^*(\sigma), \quad \mathcal{Q}_S \doteq \iota^*(\mathcal{Q}).
$$
Right-exactness of pullback gives an exact sequence
$$
\mathcal{O}_{\mathbb{P}^2_S}^{\oplus 3} \xlongrightarrow{\sigma_S} \mathcal{L}_S \lar \mathcal{Q}_S \lar 0,
$$
and we may define two kernel sheaves on $\mathbb{P}^2_S$: 
$$
\mathcal{F}_S \doteq \ker( \mathcal{L}_S \lar \mathcal{Q}_S ) = \text{im }(\sigma_S), \quad \T_S \doteq \ker(\sigma_S),
$$
so we get the two short exact sequences:
\begin{align*}
0 \lar \mathcal{F}_S \lar &\mathcal{L}_S \lar \mathcal{Q}_S \lar 0\\
0 \lar \mathcal{T}_S \lar &\mathcal{O}_{\mathbb{P}^2_S}^{\oplus 3} \lar \mathcal{F}_S \lar 0
\end{align*}
By construction, the restriction $\mathcal{Q}_S$ is flat over $S$ and $\mathcal{L}_S$ is invertible (hence flat) over $S$. Thus, the flatness of $\mathcal{F}_S$ over $S$ follows. Moreover, by repeating the same argument, we get flatness of $\mathcal{T}_S$ over $S$. 

The sheaf $\mathcal{T}_S$ will be called the \emph{universal syzygy sheaf} over the strata $Z_j$. For each triple $[f] = [f_1:f_2:f_3] \in Z_j$, the restriction to $\PP^2 \times [f] \simeq \PP^2$ induces a short exact sequence
$$
0 \lar (\mathcal{T}_S)_{[f]} \lar \mathcal{O}_{\mathbb{P}^2}^{\oplus 3} \xlongrightarrow{(f_1, f_2, f_3)} \mathcal{I}_{Z[f]}(d) \lar 0,
$$
by flatness, presenting the fiber $\mathcal{T}_{[f]}$ as the syzygy sheaf $\T_{J_{[f]}}$. Each fiber has Hilbert polynomial
\begin{align*}
\Hilb(\restr {\T_S} {\mathbb{P}^2 \times [f]}, t) &= \Hilb(\mathcal{O}_{\mathbb{P}^2}^{\oplus 3}, t) - \Hilb(\mathcal{I}_{Z[f]}(d), t) \\
&= 3\binom{t+2}{2}-\binom{t+d+2}{2}+j.  
\end{align*}

From the upper semi-continuity of cohomology for flat families (see, for example, \cite[Chapter III, Theorem 12.8]{hartshorne2013algebraic}), we conclude that
$$
Y_{e, c} \doteq \{[f] \in Z_j : h^0(\mathcal{T}_{[f]}(e)) \leq c\}
$$
is open inside $Z_j$ for every $c, e \in \mathbb{Z}_{\geq 0}$, where $\T_{[f]}(e) \doteq \restr {\T_S(e)} {\mathbb{P}^2 \times \{[f]\}}$. Then, we have, for any $1 \leq e \leq d$, the following description for the underlying set 
\begin{align*}
|I_{e}^j| &\doteq \{[f] \in Z_j : \indeg(\mathcal{T}_{[f]}) = e\}\\
&= Y_{e-1, 0} \cap (Z_j \setminus Y_{e,0})
\end{align*}
which is locally closed in $Z_j$, and thus in $\mathbb{P}(W_d)$. Then, the Bourbaki degree formula \eqref{eq:Bourbaki-deg-formula} shows that, for $[f] = [f_1:f_2:f_3] \in |I_e^j|$, the Bourbaki degree is constant, so ${\rm (ii)}$ follows. 

For ${\rm (iii)}$, if one fixes the Bourbaki degree within $Z_j$, there are at most two possible solutions $e_1, e_2 \geq 0$ for $\Bour(J) = b$, and thus
$$
\{[f] \in Z_j : \Bour(J_{[f]}) = b \} = I_{e_1}^j \cup I_{e_2}^j
$$
is a finite union of locally closed subsets, hence constructible. Since $G_d, A_d \subset \mathbb{P}(W_d)$ are closed subsets, all the considered properties remain true after intersecting with them.
\end{proof}

We endow the sets $|I_e^j|$ described above with a determinantal scheme structure, as follows. For each $l\ge0$ and $[f]\in Z_j$, consider the multiplication map
\[
\mu_{l,[f]}:R_l^{\oplus3}\longrightarrow R_{d+l},
\qquad
(a_1,a_2,a_3)\longmapsto\sum_i a_if_i.
\]
Then $\ker(\mu_{l, [f]}) = \syz(J_{[f]})_l$ is the $l$-th graded piece of the syzygy module of $J$, and the rank of the source is
$$
a_l \doteq 3 \dim_k R_{l} = 3\binom{l+2}{2}
$$
Allowing the triple $[f] = [f_1:f_2:f_3]$ to vary along $S = Z_j \subset \mathbb{P}(W_d)$, we may assemble the multiplication maps into a map of vector bundles
$$
\mu_l : R_l^{\oplus 3} \otimes \mathcal{O}_S \lar R_{d+l} \otimes_k \mathcal{O}_S(1).
$$
Another way to obtain the morphism $\mu_l$ is to twist the universal morphism over $S$
$$
\sigma_S : \mathcal{O}_{\mathbb{P}^2_S}^{\oplus 3} \lar \mathcal{O}_{\mathbb{P}^2}(d) \boxtimes \mathcal{O}_S(1)
$$
by $p_S^*\mathcal{O}_{\mathbb{P}^2}(l)$ and then push forward along $q_S:\mathbb{P}^2_S\lar S$. At geometric points $s = [J] \in S$, we have
$$
h^0(\T_{J}(l)) = \dim_k \syz(J)_l = 3 \binom{l+2}{2}- \rank(\mu_{l}(s)),
$$
hence the initial degree strata $|I_e^j|$ may be described by the scheme structure induced by the fiber product over $Z_j$ of:
$$
I_e^j \doteq Y_{e-1, 0} \times_{Z_j} D_{a_e-1}(\mu_e),
$$
where $D_{a_e-1}(\mu_e)$ is zero locus of the maximal minors of the degree-$e$ multiplication map $\mu_e$.

The next Lemma, describing the behavior of the initial degree function, will be useful in the next sections.

\begin{Lemma}\label{lem:indeg-lowersemicontinuous}
For $d \geq 1$ and $0 \leq j \leq d^2$, the function
\begin{align*}
\indeg:Z_j &\lar \mathbb{Z}\\
[f]&\longmapsto \indeg(\T_{[f]})
\end{align*}
is lower-semicontinuous.
\end{Lemma}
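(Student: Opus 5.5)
Lower semicontinuity of $\indeg$ on $Z_j$ means that for every integer $n$ the superlevel set $\{[f]\in Z_j:\indeg(\T_{[f]})\geq n\}$ is open in $Z_j$. Equivalently, the sublevel set $\{\indeg\leq n-1\}$ is closed. The plan is to rewrite this superlevel set as a finite intersection of the open sets $Y_{l,0}$ produced in the proof of Theorem~\ref{thm:strata}, using only the flatness of the universal syzygy sheaf $\T_S$ over $S=Z_j$ and upper semicontinuity of $h^0$.

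First I will record the elementary equivalence
\[
\indeg(\T_{[f]})\geq n
\iff
h^0(\T_{[f]}(l))=0\ \text{for all } l\leq n-1 .
\]
It follows from the identification $h^0(\T_J(l))=\dim_k\syz(J)_l$ established after Theorem~\ref{thm:strata}. The relevant range is only $0\leq l\leq n-1$: for $l<0$ we have $H^0(\T_{[f]}(l))\subset H^0(\mathcal O_{\PP^2}(l))^{\oplus3}=0$. In fact, monotonicity in $l$ makes the single condition $l=n-1$ sufficient. Indeed, if $\syz(J)_l\neq0$, multiplying a nonzero syzygy by a linear form gives $\syz(J)_{l+1}\neq 0$, so vanishing at $l=n-1$ forces vanishing for all smaller $l$. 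Hence
\[
\{[f]\in Z_j:\indeg(\T_{[f]})\geq n\}=Y_{n-1,0},
\]
and the case $n\leq 0$ is trivial, since then the set is all of $Z_j$.

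Next, $Y_{n-1,0}$ is open in $Z_j$ by the semicontinuity theorem \cite[III.12.8]{hartshorne2013algebraic}. It applies to the coherent sheaf $\T_S(n-1)$ on $\PP^2_S$, which is flat over $S$ by the proof of Theorem~\ref{thm:strata}. One point needs care here: the fibre $\T_S|_{\PP^2\times[f]}$ must really be $\T_{J_{[f]}}$, so that $h^0$ of the fibre computes $\dim\syz(J_{[f]})_{n-1}$. This was also shown there, using flatness of $\mathcal F_S$ and $\mathcal Q_S$.

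The main point to check is precisely this base-change compatibility, since semicontinuity concerns the fibres of the flat family rather than the sheaves defined pointwise. Should it fail, I would instead argue via the multiplication maps $\mu_l$. Their ranks are lower semicontinuous, as the nonvanishing loci of minors are open. Hence $\{\dim\syz_l=0\}=\{\rank\mu_l(s)=a_l\}$ is open directly, and this argument holds on all of $U_d$, not only on $Z_j$.
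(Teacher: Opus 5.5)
Your proof is correct and follows the paper's argument. The paper writes $\{\indeg>e\}$ as the finite intersection $\bigcap_{l=1}^{e}\{h^0(\T_{[f]}(l))=0\}$ of open sets obtained from upper semicontinuity of $h^0$ for the flat universal syzygy sheaf over $Z_j$; your monotonicity observation (multiplying a syzygy by a linear form) only collapses that intersection to the single set $Y_{n-1,0}$, and your fallback via the ranks of the multiplication maps $\mu_l$ is also valid, avoids flatness, and works on all of $U_d$.
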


\begin{proof}
For each $e \geq 1$, the subset:
\[
\{[f] \in Z_j : \indeg(\mathcal{T}_{[f]}) > e\} = \bigcap_{l=1}^{e} \{{[f]} \in Z_j : h^0(\mathcal{T}_{[f]}(l)) \leq 0\}
\]
is a finite intersection of open subsets, using upper semi-continuity of $h^0$ and flatness of $\mathcal{T}$ over $Z_j$. Thus, the complement, i.e. the set where $\indeg(\T_{[f]}) \leq e$, is closed. 
\end{proof}

\subsection{Functorial descriptions}

To achieve our goals in this paper, we need a functorial description for the inclusions $Z_j \subset U_d \subset \mathbb{P}(W_d)$.

\begin{Lemma}\label{lem:functors-of-triples}
Let $d \geq 1$. For each $j \geq 0$, there is a chain of functors $\Sch_k^{\op} \lar \Set$:
$$
\Trip_{d,j} \hookrightarrow \Trip_{d}^{\circ} \hookrightarrow \Trip_d 
$$
representable by the chain of inclusions of schemes $Z_j \hookrightarrow U_d \hookrightarrow \mathbb{P}(W_d)$.
\end{Lemma}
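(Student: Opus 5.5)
I will first define the three functors explicitly. For a locally Noetherian $k$-scheme $S$, set $\Trip_d(S)$ to be the set of equivalence classes of pairs $(\mathcal M,\phi)$, where $\mathcal M$ is a line bundle on $S$ and
\[
\phi:\mathcal O_{\PP^2_S}^{\oplus3}\longrightarrow \mathcal O_{\PP^2}(d)\boxtimes\mathcal M
\]
is a morphism that is nonzero on every fiber $\PP^2_s$. Two such pairs are identified when they differ by an isomorphism of line bundles. Equivalently, $\Trip_d(S)$ is the set of surjections $W_d^\vee\otimes\mathcal O_S\twoheadrightarrow\mathcal M$, obtained from $\phi$ by pushing forward along $q_S$ and using $q_{S*}\mathcal O_{\PP^2_S}(d)=R_d\otimes\mathcal O_S$.

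With this description, representability of $\Trip_d$ by $\PP(W_d)$ is the standard functor-of-points description of projective space. The universal object is the map $\sigma$ built in the proof of Theorem~\ref{thm:strata}, taken over all of $\PP(W_d)$, with $\mathcal M=\mathcal O(1)$.

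Next I define $\Trip_d^\circ(S)\subset\Trip_d(S)$ as the subset of those $(\mathcal M,\phi)$ whose restriction to every geometric point $s$ of $S$ gives a triple in $U_d$. That is, $\phi_s$ has $k(s)$-linearly independent components without a common factor. A morphism $S\to\PP(W_d)$ factors through the open subscheme $U_d$ exactly when it does so on points. Since Proposition~\ref{prop:parameter-space-open} shows $U_d$ is open, and since being open is a pointwise condition that is stable under base change, $\Trip_d^\circ$ is an open subfunctor represented by $U_d$.

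For $\Trip_{d,j}$ I use the flattening stratification. I define $\Trip_{d,j}(S)$ as the set of $(\mathcal M,\phi)\in\Trip_d^\circ(S)$ such that $\coker(\phi)$ is flat over $S$ and every fiber has constant Hilbert polynomial $j$. Because cokernels commute with pullback (right exactness), this condition is stable under base change, so it really defines a subfunctor.

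By Mumford's universal property of the flattening stratification, a morphism $g:S\to U_d$ factors through the stratum $Z_j$, with the scheme structure produced in Theorem~\ref{thm:strata}(i), if and only if $(\id\times g)^*\mathcal Q$ is flat over $S$ with Hilbert polynomial $j$. Here $\mathcal Q$ is the cokernel sheaf of the proof of Theorem~\ref{thm:strata}. Applying this criterion identifies $\Trip_{d,j}$ with $h_{Z_j}$. The inclusions of functors then correspond to the immersions $Z_j\hookrightarrow U_d\hookrightarrow\PP(W_d)$ under Yoneda.

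The main obstacle is this last step. I must make sure the stratum $Z_j$ used in Theorem~\ref{thm:strata} carries exactly the flattening scheme structure, not just the reduced one. I must also check that Mumford's universal property applies, since it is stated for projective morphisms over a Noetherian base; this holds here because $X_d\to U_d$ is projective and $\mathcal Q$ is coherent.
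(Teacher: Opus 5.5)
Your proposal is correct and follows essentially the same route as the paper. $U_d$ represents $\Trip_d^\circ$ because factoring through an open subscheme is a pointwise condition, and $Z_j$ represents $\Trip_{d,j}$ by the universal property of the flattening stratification of the universal cokernel, which is exactly how $Z_j$ was constructed in Theorem~\ref{thm:strata}; your right-exactness remark on $\coker(\phi)$ is the same content as the paper's check that $\mathcal Z_{h^*(\mathcal L,\mathbf f)}=\mathcal Z_{(\mathcal L,\mathbf f)}\times_T S$, phrased via cokernels instead of images of the dual map.
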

\begin{proof}
The functor of points of $\PP(W_d)$ (see, for example, \cite[Chapter II, Theorem 7.1]{hartshorne2013algebraic}) associates to a $k$-scheme $T$
the set of equivalence classes of pairs $(\mathcal L,\mathbf f)$,
where $\mathcal L$ is an invertible sheaf on $T$ and
\[
\mathbf f:
\mathcal O_T\otimes_k W_d^\vee
\twoheadrightarrow
\mathcal L
\]
is a quotient. Two pairs $(\mathcal L,\mathbf f)$ and
$(\mathcal L',\mathbf f')$ are equivalent if there is an
isomorphism $\alpha:\mathcal L\xlongrightarrow{\sim}\mathcal L'$ such that \(\mathbf f'=\alpha\circ\mathbf f.\) For each morphism of $k$-schemes, $h: S \lar T$ and $(\mathcal{L}, \textbf{f})$ a pair over $T$, the functor assigns the pair $(h^*\mathcal{L}, h^*\textbf{f})$.

Through the isomorphisms, 
$$
\Hom(\mathcal{O}_T \otimes W_d^*, \mathcal{L}) \simeq \Hom(\mathcal{O}_T, \mathcal{L} \otimes W_d) \simeq \Hom(\mathcal{O}_T, \mathcal{L} \otimes R_d)^{\oplus 3},
$$
each morphism $\mathbf f$ is
equivalently given by three sections
\(
\mathbf f_i\in H^0(T,\mathcal L\otimes_kR_d)\), for $i = 1, 2, 3$.

We first describe the open subfunctor $\Trip_d^\circ$ represented by
$U_d$. For each pair $[\mathcal{L}, \textbf{f}] \in \Trip_d(T)$, we ask two additional assumptions:
\begin{itemize}
    \item[(a)] the following non-vanishing condition
    \begin{equation}\label{eq:condition-a-functor-of-triples}
    w_{[\textbf{f}]} = \textbf{f}_1 \wedge \textbf{f}_2 \wedge \textbf{f}_3 \in H^0\left(T, \mathcal{L}^{\otimes 3} \otimes \bigwedge^3 R_d \right) \text{ is nowhere vanishing on }T.    
    \end{equation}
    If $[\mathcal{L}, \textbf{f}] = [\mathcal{L}', \textbf{f}'] \text{ via } \alpha : \mathcal{L} \simeq \mathcal{L}'$, then 
    $$
    (\alpha^{\otimes 3} \otimes \id) w_{
    [\textbf{f}]} = w_{\textbf{f}'},
    $$
    so \eqref{eq:condition-a-functor-of-triples} is well-defined on equivalence classes. To see that it is preserved under pullbacks, let $h: S \lar T$ be a morphism of $k$-schemes. Then, the pullback
    $$
    h^* : H^0\left(T, \mathcal{L}^{\otimes 3} \otimes \bigwedge^3 R_d \right) \lar H^0\left(S, h^*\mathcal{L}^{\otimes 3} \otimes \bigwedge^3 R_d\right)
    $$
    satisfies
    $$
    h^*(w_{[f]}) = h^*(\textbf{f}_1 \wedge \textbf{f}_2 \wedge \textbf{f}_3) = h^*(\textbf{f}_1) \wedge h^*(\textbf{f}_2) \wedge h^*(\textbf{f}_3) = w_{h^*\textbf{f}},
    $$
    hence the pullbacks preserve the condition \eqref{eq:condition-a-functor-of-triples}.
    \item[(b)] Over $X = \mathbb{P}^2_T$ with projections $\mathbb{P}^2_T \xlongrightarrow{p} \mathbb{P}^2$ and $\mathbb{P}^2_T \xlongrightarrow{q} T$, let
    $$
    \sigma_i : \mathcal{O}_{X} \xlongrightarrow{q^*\textbf{f}_i} q^* \mathcal{L} \otimes_k R_d \xlongrightarrow{\id \boxtimes \text{ev}} q^* \mathcal{L} \otimes_{\mathcal{O}_X} p^* \mathcal{O}_{\mathbb{P}^2}(d) = \mathcal{L} \boxtimes \mathcal{O}_{\mathbb{P}^2}(d)
    $$
    for $i =1, 2, 3$. Joining the dual morphisms $\sigma_i^{\vee} : \mathcal{L}^\vee \boxtimes \mathcal{O}_{\mathbb{P}^2}(-d) \lar \mathcal{O}_X$ into a single map as below:
    $$
    \tau = \tau_{(\mathcal{L}, \textbf{f})} \doteq (\sigma_1^\vee, \sigma_2^\vee, \sigma_3^\vee) : (\mathcal{L}^\vee \boxtimes \mathcal{O}_{\mathbb{P}^2}(-d))^{\oplus 3} \lar \mathcal{O}_X.
    $$
    The image sheaf $\text{im }\tau = \mathcal{I}_{\mathcal{Z}_{(\mathcal{L}, \textbf{f})}} \hookrightarrow \mathcal{O}_X$ determines a closed subscheme $\mathcal{Z}_{(\mathcal{L}, \textbf{f})}$. This construction depends only on the equivalence class of
$(\mathcal L,\mathbf f)$. Indeed, if
$\alpha:\mathcal L\xlongrightarrow{\sim}\mathcal L'$ identifies the two
triples, then the corresponding morphisms $\tau$ and $\tau'$ differ
only by the induced isomorphism of their sources, and hence have the
same image ideal.

We next verify compatibility with arbitrary base change. Let
$h:S\lar T$ be a morphism of $k$-schemes and set
\[
g\doteq\id_{\PP^2}\times h:
\PP^2_S\longrightarrow\PP^2_T,
\]
and \(\widetilde{\mathcal L}
\doteq
\mathcal L\boxtimes\mathcal O_{\PP^2}(d)\). By construction of the evaluation maps,
\[
g^*\tau_{(\mathcal L,\mathbf f)}
=
\tau_{h^*(\mathcal L,\mathbf f)}.
\]
We emphasize that pullback does not, in general, commute with taking
images. Instead, factor $\tau_{(\mathcal L,\mathbf f)}$ as
\[
(\widetilde{\mathcal L}^{\vee})^{\oplus3}
\twoheadrightarrow
\mathcal I_{\mathcal Z_{(\mathcal L,\mathbf f)}}
\hookrightarrow
\mathcal O_{\PP^2_T}.
\]
Since pullback is right exact, after applying $g^*$ we obtain a
surjection
\[
g^*(\widetilde{\mathcal L}^{\vee})^{\oplus3}
\twoheadrightarrow
g^*\mathcal I_{\mathcal Z_{(\mathcal L,\mathbf f)}}.
\]
Therefore
\[
\im\bigl(g^*\tau_{(\mathcal L,\mathbf f)}\bigr)
=
\im\left(
g^*\mathcal I_{\mathcal Z_{(\mathcal L,\mathbf f)}}
\longrightarrow
\mathcal O_{\PP^2_S}
\right).
\]
The right-hand side is precisely the ideal sheaf of the
scheme-theoretic inverse image
\[
\mathcal Z_{(\mathcal L,\mathbf f)}\times_T S,
\]
whereas the left-hand side is \(
\mathcal I_{\mathcal Z_{h^*(\mathcal L,\mathbf f)}}\). Hence
\begin{equation}\label{eq:pullback-Z-scheme}
\mathcal Z_{h^*(\mathcal L,\mathbf f)}
=
\mathcal Z_{(\mathcal L,\mathbf f)}\times_T S.
\end{equation}

In particular, for every geometric point $t\lar T$,
\[
\bigl(\mathcal Z_{(\mathcal L,\mathbf f)}\bigr)_t
=
V\bigl(
\mathbf f_1(t),\mathbf f_2(t),\mathbf f_3(t)
\bigr)
\subseteq\PP^2_{\kappa(t)}
\]
scheme-theoretically. The second condition defining $\Trip_d^\circ$ is that these geometric
fibers be finite, possibly empty. Since the three forms are linearly
independent, this is equivalent to
\[
\height\bigl(
\mathbf f_1(t),\mathbf f_2(t),\mathbf f_3(t)
\bigr)\geq2
\]
for every geometric $t\lar T$, or equivalently to saying that the
three forms have no common divisorial factor.
\end{itemize}

These two fiberwise conditions are exactly the conditions defining the open subset
\[
U_d\subseteq\PP(W_d).
\]
Thus a morphism $T\lar\PP(W_d)$ factors through $U_d$ if and only if
the corresponding pair $(\mathcal L,\mathbf f)$ satisfies the two
conditions above. Hence $U_d$ represents $\Trip_d^\circ$.

Finally, let $Z_j\subseteq U_d$ be the flattening stratum of the
universal base scheme corresponding to the constant Hilbert
polynomial $j$. By the universal property of the flattening
stratification, a morphism $T\lar U_d$ factors through $Z_j$ if and
only if the associated closed subscheme
\[
\mathcal Z_{(\mathcal L,\mathbf f)}
\subseteq\PP^2_T
\]
is $T$-flat and every geometric fiber has length $j$. Therefore, the subfunctor
\[
\Trip_{d,j}(T)
=
\left\{
[\mathcal L,\mathbf f]\in\Trip_d^\circ(T)
\ \middle|\
\begin{array}{l}
\mathcal O_{\mathcal Z_{(\mathcal L,\mathbf f)}}\text{ is flat over }T,\\[2mm]
\length_{\kappa(t)}
\bigl(\mathcal Z_{(\mathcal L,\mathbf f),t}\bigr)=j
\text{ for every geometric }t\lar T
\end{array}
\right\},
\]
is represented by $Z_j$. Consequently, the inclusions of functors
\[
\Trip_{d,j}\hookrightarrow
\Trip_d^\circ\hookrightarrow
\Trip_d
\]
are represented by the inclusions \(Z_j\hookrightarrow U_d\hookrightarrow\PP(W_d)\).
\end{proof}

\subsection{Different notions of flat families}
\label{subsec:different-flatness}

There are two natural notions of flatness for a family of triples.
Let \([\mathcal L,\mathbf f]\in\Trip_d(T)\). Locally on $T$, after trivializing $\mathcal L$, the three sections
define a homogeneous ideal
\[
J=(F_1,F_2,F_3)\subseteq\mathcal O_T[x,y,z],
\]
and these local ideals glue to a quasi-coherent graded ideal
$\mathcal J_{[\mathcal L,\mathbf f]}$. Set
\[
\mathcal Q_{[\mathcal L,\mathbf f]}
\doteq
\frac{\mathcal O_T[x,y,z]}
     {\mathcal J_{[\mathcal L,\mathbf f]}}.
\]
We distinguish:

\begin{itemize}
\item[(a)] \emph{quotient-flatness}: $\mathcal Q_{[\mathcal L,\mathbf f]}$
is $T$-flat;

\item[(b)] \emph{subscheme-flatness}: the projective base scheme \(\mathcal Z_{[\mathcal L,\mathbf f]}
\subseteq\PP^2_T\) is $T$-flat.
\end{itemize}

Throughout this paper we use subscheme-flatness. For gradient ideals
of reduced plane curves, this amounts to keeping the global Tjurina
number constant, which is weaker than equisingularity.

There is a practical reason for this choice. The universal graded
quotient over $\PP(W_d)$ is not coherent as an
$\mathcal O_{\PP(W_d)}$-module. For instance, at
\[
[x^d:x^{d-1}y:y^d]
\]
its fiber contains the infinite linearly independent family \(\{1,z,z^2,\ldots\}.\)
Thus ordinary flattening stratification is naturally applied instead
to the projective base schemes.

\begin{Lemma}\label{lem:quot-flatness-implies-subscheme-flatness}
Quotient-flatness implies subscheme-flatness.
\end{Lemma}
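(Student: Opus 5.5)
The question is local on $T$, so I would take $T=\Spec A$ affine with $\mathcal L$ trivialized. Then $\mathcal J=(F_1,F_2,F_3)\subseteq A[x,y,z]$ is a homogeneous ideal and $\mathcal Q=A[x,y,z]/\mathcal J$ is a graded $A$-module. The projective base scheme is
\[
\mathcal Z=\proj(\mathcal Q)\subseteq\PP^2_A,
\qquad
\mathcal O_{\mathcal Z}=\widetilde{\mathcal Q},
\]
since it is cut out by the sheaf image of $\tau$, which is the sheafification of $\mathcal J$.

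The plan is to reduce $T$-flatness of $\mathcal Z$ to flatness of the graded pieces of $\mathcal Q$. Assume $\mathcal Q$ is $A$-flat. Since $\mathcal Q=\bigoplus_{m\ge0}\mathcal Q_m$ as an $A$-module, and a direct sum is flat if and only if each summand is, every graded piece $\mathcal Q_m$ is a flat $A$-module.

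Next I would show that each localization $\mathcal Q_{(x)}$ is flat; the cases $y$ and $z$ are identical. Flatness of $\mathcal O_{\mathcal Z}$ over $T$ then follows, since it is checked on the affine cover $D_+(x),D_+(y),D_+(z)$ of $\mathcal Z$, where $\mathcal O_{\mathcal Z}(D_+(x))=\mathcal Q_{(x)}$. The degree-zero localization is a filtered colimit,
\[
\mathcal Q_{(x)}=\varinjlim_m \mathcal Q_m,
\]
with transition maps given by multiplication by $x$: an element $q/x^m$ with $q\in\mathcal Q_m$ maps to $xq/x^{m+1}$. A filtered colimit of flat modules is flat, because $\Tor$ commutes with filtered colimits, or equivalently because tensor products commute with colimits and filtered colimits are exact. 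Hence $\mathcal Q_{(x)}$ is $A$-flat. Alternatively, $\mathcal Q_x=\mathcal Q\otimes_{A[x,y,z]}A[x,y,z]_x$ is a filtered colimit of shifted copies of $\mathcal Q$, so it is flat, and $\mathcal Q_{(x)}$ is its degree-zero direct summand, so it is flat as well.

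No step is a real obstacle. The only point needing care is the identification of the scheme $\mathcal Z_{(\mathcal L,\mathbf f)}$ from Lemma~\ref{lem:functors-of-triples} with $\proj(\mathcal Q)$ locally on $T$. This holds because the image of $\tau$ is the sheaf associated to the graded ideal $\mathcal J$, so $\mathcal O_{\mathcal Z}=\widetilde{\mathcal Q}$ on $\PP^2_A$.
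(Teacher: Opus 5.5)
Your proof is correct and is essentially the paper's argument: you work over $T=\Spec A$, identify $\mathcal Z$ with $\proj(\mathcal Q)$, and show that each chart ring $\mathcal Q_{(x_i)}$ is $A$-flat. Your alternative route (flatness of $\mathcal Q_{x_i}$ plus the degree-zero part being a direct summand) is exactly what the paper does, and your main route via $\mathcal Q_{(x)}=\varinjlim_m \mathcal Q_m$ with flat graded pieces is an equally valid minor variant.
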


\begin{proof}
The assertion is local on $T$. Write
\[
T=\Spec A,
\qquad
Q=A[x,y,z]/(F_1,F_2,F_3),
\]
and assume that $Q$ is $A$-flat. On a standard affine chart of
$\proj(Q)$ one has
\[
D_+(x_i)\cap\proj(Q)
=
\Spec\bigl((Q_{x_i})_0\bigr).
\]
The localization $Q_{x_i}$ is $A$-flat, and its degree-zero
part is a direct summand as an $A$-module. Hence
$(Q_{x_i})_0$ is $A$-flat. The standard charts cover
$\proj(Q)$, proving the claim.
\end{proof}

The converse fails because passage to $\proj$ only remembers the
saturation.

\begin{Example}\label{ex:subscheme-flat-and-quot-flat}
Consider
\[
F_t=x^4+t x^2y^2+y^3z,
\]
whose gradient ideal is
\[
J_t=
(4x^3+2txy^2,\,
 2tx^2y+3y^2z,\,
 y^3).
\]
For every $t$, the projective scheme $V(J_t)$ is supported at $[0:0:1]$ and has length six. Thus the fibers of the projective family have constant Hilbert polynomial. By \cite[Chapter~III, Theorem~9.9]{hartshorne2013algebraic},
the corresponding family over $\mathbb A^1$ is flat.

On the other hand, the graded quotient
\[
Q=k[x,y,z,t]/J_t
\]
is not flat over $k[t]$. Indeed, \(H=x^2y^2\notin J_t
\), whereas
\[
2tH
=
y(2tx^2y+3y^2z)-3z(y^3)
\in J_t.
\]
Thus the class of $H$ is nonzero $t$-torsion in $Q$.

This torsion disappears after saturation. At $t=0$,
\[
J_0=(4x^3,3y^2z,y^3),
\qquad
y^2\in J_0^{\mathrm{sat}}\setminus J_0,
\]
and therefore \(x^2y^2\in J_0^{\mathrm{sat}}\). This explains geometrically why subscheme-flatness may hold although
quotient-flatness fails.
\end{Example}

\section{Local geometry of the strata of fixed degree}\label{sec:local-geometry}

This section is devoted to the study of the strata $Z_j$ described in Theorem~\ref{thm:strata}. We start with some preliminary lemmas about the functor of triples.

\begin{Lemma}
\label{lem:compatibility-determinant}
Let $T$ be a locally Noetherian $k$-scheme and let \([\mathcal L,\mathbf f]\in \Trip_{d,j}(T)\). Write
\[
p:\PP^2_T\lar\PP^2,
\qquad
q:\PP^2_T\lar T
\]
for projections and set \( \widetilde{\mathcal L}
\doteq
q^*\mathcal L\otimes p^*\mathcal O_{\PP^2}(d).
\)
Let \(
\lambda:
\mathcal O_{\PP^2_T}^{\oplus3}
\longrightarrow
\widetilde{\mathcal L}\) be the evaluation morphism determined by $\mathbf f$, let
$\mathcal Z\subset\PP^2_T$ be its base scheme, and put
\[
\mathcal F\doteq\im(\lambda)
=
\mathcal I_{\mathcal Z}\otimes\widetilde{\mathcal L},
\qquad
\mathcal K\doteq\ker(\lambda).
\]
Then:

\begin{enumerate}
    \item[(a)] $\mathcal K$ is locally free of rank two.

    \item[(b)] The canonical wedge morphism
    \[
    \delta:
    \mathcal O_{\PP^2_T}^{\oplus3}
    \longrightarrow
    \det(\mathcal K)^\vee
    \]
    annihilates $\mathcal K$ and hence factors through $\mathcal F$.
    Moreover, there is an isomorphism
    \[
    \theta:
    \det(\mathcal K)^\vee
    \xlongrightarrow{\sim}
    \widetilde{\mathcal L}
    \]
    such that
    \(
    \theta\circ\delta=\lambda.
    \)

    \item[(c)] The constructions of $\mathcal K$, $\delta$, and $\theta$
    are compatible with arbitrary base change $S\lar T$.
\end{enumerate}

In particular, the determinant reconstruction applied to the quotient
associated with $[\mathcal L,\mathbf f]$ recovers the original triple
up to the equivalence defining $\Trip_{d,j}$.
\end{Lemma}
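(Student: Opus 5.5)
The plan is to work fiberwise and then spread out over the base, using the flatness built into $\Trip_{d,j}$ (Lemma~\ref{lem:functors-of-triples}). We have the exact sequences
\[
0\lar\mathcal K\lar\mathcal O_{\PP^2_T}^{\oplus3}\lar\mathcal F\lar0,
\qquad
0\lar\mathcal F\lar\widetilde{\mathcal L}\lar\mathcal O_{\mathcal Z}\otimes\widetilde{\mathcal L}\lar0 .
\]
Since $\mathcal O_{\mathcal Z}$ is $T$-flat and $\widetilde{\mathcal L}$ is invertible, $\mathcal F$ is $T$-flat, and then so is $\mathcal K$. This is the argument already used in the proof of Theorem~\ref{thm:strata}. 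By flatness, both sequences restrict to exact sequences on every fiber $\PP^2_{\kappa(t)}$. Hence $\mathcal K_t=\T_{J_t}$ and $\mathcal F_t=\mathcal I_{Z_t}(d)$, and base change of $\mathcal K$, $\mathcal F$, $\mathcal Z$ holds for arbitrary $S\lar T$, giving the first part of (c).

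For (a), I will first work on a single fiber. On $\PP^2$, $\mathcal I_Z(d)$ has depth $\geq1$ at every point because $Z$ is zero-dimensional. Hence $\T_J$ is a second syzygy sheaf in dimension two, i.e. reflexive, hence locally free of rank $2$. Concretely, at a point $x$ the local ring $\mathcal O_x$ has depth $2$, so the Auslander--Buchsbaum formula gives $\pd(\mathcal I_{Z,x})\leq1$ and therefore $\mathcal K_x$ is free. To pass to the family, I use the local criterion: $\mathcal K$ is coherent, $T$-flat, and has locally free fibers on each $\PP^2_{\kappa(t)}$. The fiberwise criterion for flatness over $\PP^2_T\to T$ (EGA IV, 11.3.10, in the locally Noetherian setting) then shows that $\mathcal K$ is locally free of rank $2$.

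For (b), define $\delta$ by $v\mapsto(\kappa_1\wedge\kappa_2\mapsto v\wedge\kappa_1\wedge\kappa_2)$, using $\bigwedge^3\mathcal O^{\oplus3}\simeq\mathcal O$. It kills $\mathcal K$ because $\mathcal K$ has rank two, so $\delta$ factors through $\mathcal F$ as $\bar\delta:\mathcal F\to\det(\mathcal K)^\vee$. To produce $\theta$, compare $\bar\delta$ with the inclusion $\mathcal F\hookrightarrow\widetilde{\mathcal L}$. Away from $\mathcal Z$, $\mathcal F=\widetilde{\mathcal L}$ and $\mathcal O^{\oplus3}\to\widetilde{\mathcal L}$ is a surjection with kernel $\mathcal K$. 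There the standard linear algebra identity $\det\mathcal K\otimes\widetilde{\mathcal L}\simeq\det\mathcal O^{\oplus3}=\mathcal O$ shows that $\bar\delta$ is an isomorphism onto $\det(\mathcal K)^\vee$. This yields an isomorphism $\theta^\circ:\det(\mathcal K)^\vee|_{U}\to\widetilde{\mathcal L}|_U$ with $\theta^\circ\circ\delta=\lambda$ on $U=\PP^2_T\setminus\mathcal Z$.

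The main obstacle is extending $\theta^\circ$ across $\mathcal Z$. The first thing I will try is to observe that $\mathcal Z$ is finite over $T$, hence of relative codimension $2$ in every fiber. Both sheaves are line bundles, so I would use a relative Hartogs argument. A line bundle map defined off a closed subset $\mathcal Z$ that is flat over $T$ and of fiber codimension $\geq2$ in a $T$-flat scheme with $S_2$ fibers extends uniquely, because $j_*\mathcal O_U=\mathcal O$ (fiberwise depth $\geq2$ along $\mathcal Z$ and flatness give $\mathcal H^0_{\mathcal Z}=\mathcal H^1_{\mathcal Z}=0$). The same argument applied to $(\theta^\circ)^{-1}$ shows that the extension is an isomorphism, and $\theta\circ\delta=\lambda$ holds since it holds on a schematically dense open. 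If this local cohomology step is delicate, the alternative is to check directly that the local cofactor description gives $\delta=(\text{unit})\cdot\lambda$ in local bases.

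For the rest of (c), $\delta$ visibly commutes with pullback. The map $\theta$ does as well, being the unique map satisfying $\theta\circ\delta=\lambda$, since $\delta$ has schematically dense surjective locus after any base change. Finally, the closing assertion follows. The quotient $\mathcal O^{\oplus3}\twoheadrightarrow\mathcal F$ determines $\mathcal K$ and $\delta$, and $\theta$ identifies $\det(\mathcal K)^\vee$ with $\widetilde{\mathcal L}$ compatibly with $\lambda$. Pushing forward $q_*$ recovers $\mathcal L\otimes R_d$ and the three sections $\mathbf f_i$ up to the automorphism of $\mathcal L$, i.e. up to the equivalence defining $\Trip_{d,j}$.
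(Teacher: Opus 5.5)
Your proof is correct. For (a) and (c) it follows the paper: flatness of $\mathcal O_{\mathcal Z}$ gives $T$-flatness of $\mathcal F$ and $\mathcal K$, a depth count makes each $\mathcal K_t$ locally free, and the fibrewise flatness criterion globalises this. Compatibility of $\theta$ with base change then comes from uniqueness of $\theta$ subject to $\theta\circ\delta=\lambda$.

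The genuine difference is how you obtain $\theta$ in (b).

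\emph{The paper's route.} It works in local trivialisations at $x\in\PP^2_T$. Hilbert--Burch gives $(f_1,f_2,f_3)=a(\Delta_{23},-\Delta_{13},\Delta_{12})$ with $a$ a non-zero-divisor. Then $a$ must be a unit, because on the geometric fibre $(\bar f_i)\subseteq(\bar a)$, while $(\bar f_i)$ is either the unit ideal or of height two. The local isomorphisms given by multiplication by $a$ are then glued, using only that $\PP^2_T\setminus\mathcal Z$ is schematically dense.

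\emph{Your route.} You define $\theta$ canonically on $U=\PP^2_T\setminus\mathcal Z$ as the inverse of $\bar\delta|_U$. You then extend it, and its inverse, by a relative Hartogs statement $j_*\mathcal O_U=\mathcal O_{\PP^2_T}$. That statement does hold, and it carries the real content, so it deserves the one-line justification you only gestured at. Take $x\in\mathcal Z$ over $t$. Since $\mathcal Z_t$ is finite, you can choose $\bar g_1,\bar g_2$ in the ideal of $\mathcal Z_t$ forming a regular sequence in the two-dimensional regular local ring $\mathcal O_{\PP^2_{\kappa(t)},x}$. Lift them to $\mathcal I_{\mathcal Z,x}$. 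By flatness of $\mathcal O_{\PP^2_T,x}$ over $\mathcal O_{T,t}$, the lifts form a regular sequence (Matsumura, Cor.\ to Thm.~22.5). Hence $\operatorname{grade}(\mathcal I_{\mathcal Z,x})\geq 2$, and so $\mathcal H^0_{\mathcal Z}=\mathcal H^1_{\mathcal Z}=0$. Flatness of $\mathcal Z$ itself is not needed here; finiteness of the fibres suffices.

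\emph{Trade-off.} The paper's argument gives an explicit local formula $\lambda=a\,\delta$ and needs only depth $\geq1$ along $\mathcal Z$ for the gluing. Yours is choice-free, needs no trivialisations or gluing, and makes base change immediate. The price is the depth-two input in the total space.

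A small misattribution in (a): $\mathcal I_{Z,x}$ has depth $\geq 1$ because it is a nonzero ideal in a domain, i.e.\ torsion-free. This has nothing to do with $Z$ being zero-dimensional.
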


\begin{proof}
Since $\mathcal Z\lar T$ is flat, the sequence
\[
0\lar\mathcal I_{\mathcal Z}
\lar\mathcal O_{\PP^2_T}
\lar\mathcal O_{\mathcal Z}
\lar0
\]
shows that $\mathcal I_{\mathcal Z}$ is $T$-flat. Hence \(\mathcal F
=\mathcal I_{\mathcal Z}\otimes\widetilde{\mathcal L}
\)
is $T$-flat, and from the short exact sequence
\[
0\lar\mathcal K
\lar\mathcal O_{\PP^2_T}^{\oplus3}
\lar\mathcal F
\lar0
\]
it follows that $\mathcal K$ is $T$-flat and its formation commutes
with base change.

For every geometric point $t\lar T$, the sheaf $\mathcal F_t$ is
torsion-free of rank one on the regular surface $\PP^2_t$. The depth
lemma therefore shows that $\mathcal K_t$ is locally free of rank
two. By the fiberwise criterion for local freeness, $\mathcal K$ is
locally free of rank two. This proves~(a).

Let
\(
\iota:\mathcal K\hookrightarrow
\mathcal V\doteq\mathcal O_{\PP^2_T}^{\oplus3}\) denote the injection. The wedge pairing induces canonically
\[
\det(\mathcal K)\otimes\mathcal V
\xlongrightarrow{\;\wedge^2(\iota)\otimes\id\;}
\bigwedge^2\mathcal V\otimes\mathcal V
\xlongrightarrow{\;\wedge\;}
\bigwedge^3\mathcal V
=
\det(\mathcal V) \simeq \mathcal{O}_{\mathbb{P}^2_T},
\]
or equivalently
\[
\delta:
\mathcal V\longrightarrow\det(\mathcal K)^\vee.
\]
Since
\[
\bigwedge^3\mathcal K=0,
\]
the morphism $\delta$ vanishes on $\mathcal K$ and therefore factors
through $\mathcal F$.

It remains to compare $\delta$ with $\lambda$. Work locally at
$x\in\PP^2_T$ and trivialize $\mathcal K$ and
$\widetilde{\mathcal L}$. We obtain
\[
0\lar R^{\oplus2}
\xlongrightarrow{A}
R^{\oplus3}
\xlongrightarrow{(f_1,f_2,f_3)}
I
\lar0,
\qquad
R=\mathcal O_{\PP^2_T,x}.
\]
By Hilbert--Burch (\cite[Theorem~20.15]{EisenbudBook}),
\[
(f_1,f_2,f_3)
=
a(\Delta_{23},-\Delta_{13},\Delta_{12})
\]
for some non-zero-divisor $a\in R$. Reducing to the geometric fiber
through $x$, the ideal generated by the $\overline f_i$ is either the
unit ideal or has height two. Hence $\overline a$ cannot be a
non-unit, and therefore $a$ is a unit in $R$.

But $\delta$ is represented in these trivializations by
\[
(\Delta_{23},-\Delta_{13},\Delta_{12}),
\]
whereas $\lambda$ is represented by \((f_1,f_2,f_3)\). Thus multiplication by $a$ gives a local isomorphism
\[
\theta:
\det(\mathcal K)^\vee
\xlongrightarrow{\sim}
\widetilde{\mathcal L}
\]
satisfying \(\theta\circ\delta=\lambda\).

On the complement of $\mathcal Z$ this is precisely the canonical
determinant isomorphism associated with the exact sequence of vector
bundles
\[
0\lar\mathcal K
\lar\mathcal O^{\oplus3}
\lar\widetilde{\mathcal L}
\lar0.
\]
Since $\mathcal Z$ has finite fibers over $T$, its complement is
schematically dense in $\PP^2_T$ with codimension two. Hence the local isomorphisms agree
on overlaps and glue uniquely. This proves~(b).

Finally, formation of $\mathcal K$ commutes with arbitrary base
change, and the same is true for exterior powers, determinants, and
the canonical wedge morphism. The isomorphism $\theta$ is uniquely
characterized by
\[
\theta\circ\delta=\lambda,
\]
so it too is compatible with base change. This proves~(c).
\end{proof}

The following important flatness lemma is included here for the sake of completeness.

\begin{Lemma}[Fiberwise injectivity and flat cokernel]
\label{lem:flatness-finite-cokernel}
Let $q:X\lar T$ be a morphism of locally Noetherian schemes, let
$\mathcal F$ be a coherent sheaf on $X$, let $\mathcal L$ be a
coherent sheaf which is $T$-flat, and let
\[
\varphi:\mathcal F\longrightarrow\mathcal L
\]
be a morphism. Assume that, for every geometric point $t\lar T$, the
induced morphism
\[
\varphi_t:\mathcal F_t\longrightarrow\mathcal L_t
\]
is injective. Then $\varphi$ is injective and \(\mathcal C\doteq\coker(\varphi)\)
is $T$-flat.

If, moreover, $q$ is projective, $\mathcal L$ is invertible, and
$\mathcal C_t$ has length $j$ for every geometric point $t\lar T$,
then
\[
\mathcal I_{\mathcal Z}
\doteq
\mathcal F\otimes\mathcal L^\vee
\hookrightarrow\mathcal O_X
\]
defines a finite flat family
\(\mathcal Z\subset X\) of closed subschemes of degree $j$ over $T$, with
\(\mathcal O_{\mathcal Z}
\simeq
\mathcal C\otimes\mathcal L^\vee
\).
\end{Lemma}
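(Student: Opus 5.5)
The plan is to reduce both assertions to the local criterion of flatness. The claims are local on $X$ and $T$, so we may work at a point $x\in X$ lying over $t=q(x)$. Set $A=\mathcal O_{T,t}$, a Noetherian local ring with residue field $\kappa=\kappa(t)$, and put $B=\mathcal O_{X,x}$, $F=\mathcal F_x$, $L=\mathcal L_x$. These are finitely generated $B$-modules, and $L$ is $A$-flat. Fiberwise injectivity at the geometric point over $t$ descends to injectivity of $F\otimes_A\kappa\to L\otimes_A\kappa$, because $\kappa\to\bar\kappa$ is faithfully flat.

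First, consider the exact sequence
\[
F\xrightarrow{\varphi} L\lar C\lar 0 .
\]
Tensor it with $\kappa$ and compare it with the long exact $\Tor^A$ sequence of $0\lar\im\varphi\lar L\lar C\lar 0$. Since $L$ is flat, we get $\Tor_1^A(C,\kappa)\simeq\ker\bigl(\im\varphi\otimes\kappa\to L\otimes\kappa\bigr)$. The map $F\otimes\kappa\to L\otimes\kappa$ is injective and factors through the surjection $F\otimes\kappa\twoheadrightarrow\im\varphi\otimes\kappa$. Hence that surjection is an isomorphism, the map $\im\varphi\otimes\kappa\to L\otimes\kappa$ is injective, and $\Tor_1^A(C,\kappa)=0$. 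Now apply the local criterion of flatness (e.g.\ \cite[Theorem~6.8]{EisenbudBook}, in the version for a finitely generated module over a local Noetherian $A$-algebra $B$). It shows that $C$ is $A$-flat, and therefore that $\im\varphi$ is $A$-flat as well.

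Second, we prove injectivity. Let $K=\ker\varphi$, so that $0\lar K\lar F\lar\im\varphi\lar 0$ is exact. Because $\im\varphi$ is flat, tensoring with $\kappa$ preserves exactness. The map $F\otimes\kappa\to\im\varphi\otimes\kappa$ was shown above to be an isomorphism, so $K\otimes_A\kappa=0$. Since $K$ is a finitely generated $B$-module and $\mathfrak m_AB\subset\mathfrak m_B$, Nakayama's lemma gives $K=0$. This is the main subtle point of the argument: one needs the flatness of the image, obtained in the first step, before Nakayama can be applied. For this reason the order of the two steps matters.

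For the second assertion, assume $\mathcal L$ is invertible. Tensoring with $\mathcal L^\vee$ is exact, so $\mathcal I_{\mathcal Z}\doteq\mathcal F\otimes\mathcal L^\vee\hookrightarrow\mathcal O_X$ is an ideal sheaf whose quotient is $\mathcal C\otimes\mathcal L^\vee$. Flatness of this quotient over $T$ follows from flatness of $\mathcal C$. The fibers $\mathcal Z_t$ are finite of length $j$, because $\mathcal C_t\otimes\mathcal L_t^\vee$ is locally isomorphic to $\mathcal C_t$. Thus $\mathcal Z\to T$ is projective, since $q$ is, and quasi-finite, hence finite. Being finite and flat, $q_*\mathcal O_{\mathcal Z}$ is locally free, and its rank equals the fiber length $j$ (compare \cite[Chapter~III, Theorem~9.9]{hartshorne2013algebraic}). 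Therefore $\mathcal Z$ is a finite flat family of degree $j$ with $\mathcal O_{\mathcal Z}\simeq\mathcal C\otimes\mathcal L^\vee$.
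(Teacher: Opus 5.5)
Your proposal is correct and follows the same route as the paper: you work stalkwise at \(x\) over \(t\), use flatness of \(\mathcal L_x\) over \(\mathcal O_{T,t}\) together with injectivity on the fiber to get injectivity of \(\varphi\) and \(T\)-flatness of \(\mathcal C\), and then twist by \(\mathcal L^\vee\) and use that a proper, quasi-finite morphism is finite. The only difference is that the paper cites the local criterion directly in the form ``a fiberwise injective map into a flat module is injective with flat cokernel,'' whereas you derive that form from the \(\Tor_1\) criterion and Nakayama (and you also spell out the descent from the geometric fiber to \(\kappa(t)\)), which is fine.
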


\begin{proof}
The assertion is local on $X$. Let $x\in X$, $t=q(x)$, and set \(A=\mathcal O_{T,t}\). By hypothesis,
\[
\varphi_x\otimes_A\kappa(t)
\]
is injective, while $\mathcal L_x$ is flat over $A$. The local
criterion for flatness
\cite[Section~20, Application~20.2]{matsumuraCA}
therefore implies that $\varphi_x$ is injective and that its cokernel
is flat over $A$. Since this holds for every $x$, $\varphi$ is
injective and \(
\mathcal C=\coker(\varphi)\) is flat over $T$.

Assume now that $q$ is projective, $\mathcal L$ is invertible, and
every geometric fiber $\mathcal C_t$ has length $j$. Then
$\Supp(\mathcal C)\lar T$ is proper with finite fibers, hence finite.
Tensoring
\[
0\lar\mathcal F\lar\mathcal L\lar\mathcal C\lar0
\]
by $\mathcal L^\vee$ gives
\[
0\lar
\mathcal F\otimes\mathcal L^\vee
\lar
\mathcal O_X
\lar
\mathcal C\otimes\mathcal L^\vee
\lar0.
\]
Thus \(\mathcal I_{\mathcal Z}
=
\mathcal F\otimes\mathcal L^\vee\) is an ideal sheaf and
\(\mathcal O_{\mathcal Z}
\simeq
\mathcal C\otimes\mathcal L^\vee\) is flat over $T$ with fibers of length $j$. Hence
$\mathcal Z\lar T$ is finite flat of degree $j$.
\end{proof}

In the next theorem, we show $Z_j$ is isomorphic to an open subset of a Quot scheme, enabling us to describe both the tangent space and an obstruction space locally at each point.

\begin{Theorem}\label{thm:open-Z-j-quot-scheme}
Fix $d \geq 1$ and $j \geq 0$. The functor $\Trip_{d,j}$ is a sub-functor of Grothendieck's Quot functor $\QuotF_{\mathbb{P}^2}^{P_j}(\mathcal{O}_{\mathbb{P}^2}^{\oplus 3})$ of quotients of $\mathcal{O}_{\mathbb{P}^2}^{\oplus 3}$ over $\mathbb{P}^2$ with fixed Hilbert polynomial $P_j = \binom{t+d+2}{2} - j$. Moreover, the induced morphism of schemes
$$
i : Z_j \lar \Quot_{\mathbb{P}^2}^{P_j}(\mathcal{O}_{\mathbb{P}^2}^{\oplus 3})
$$
is an injective open immersion, with
$$
\text{im }i = \{[\rho : \mathcal{O}_{\mathbb{P}^2}^{\oplus 3} \twoheadrightarrow \mathcal{F}] : \mathcal{F} \text{ is torsion-free and }h^0(\ker(\rho)) = 0\}.
$$
In particular, the tangent space of $Z_j$ at a point $p = [f_1, f_2, f_3] = J$, with $Z = V(J)$, is isomorphic to $\Hom(\T_J, \mathcal{I}_Z(d))$, and $\Ext^1(\T_J, \mathcal{I}_Z(d))$ is an associated obstruction space.
\end{Theorem}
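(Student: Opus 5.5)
The plan is to build a natural transformation $\Trip_{d,j}\to\QuotF^{P_j}_{\PP^2}(\mathcal O_{\PP^2}^{\oplus3})$, then show it is a monomorphism whose image is exactly the subfunctor of quotients described in the statement, and that this subfunctor is open. For $[\mathcal L,\mathbf f]\in\Trip_{d,j}(T)$, take the evaluation morphism $\lambda:\mathcal O_{\PP^2_T}^{\oplus3}\to\widetilde{\mathcal L}$ of Lemma~\ref{lem:compatibility-determinant}. Send the class to the quotient $\rho:\mathcal O^{\oplus3}_{\PP^2_T}\twoheadrightarrow\mathcal F=\im(\lambda)=\mathcal I_{\mathcal Z}\otimes\widetilde{\mathcal L}$.

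\begin{itemize}
\item \emph{Flatness and Hilbert polynomial.} Flatness of $\mathcal Z$ (the defining condition of $\Trip_{d,j}$) makes $\mathcal F$ flat over $T$. On each geometric fiber $\mathcal F_t\simeq\mathcal I_{Z_t}(d)$, whose Hilbert polynomial is $\binom{t+d+2}{2}-j=P_j$.
\item \emph{Well-definedness.} Equivalent pairs $(\mathcal L,\mathbf f)\sim(\mathcal L',\mathbf f')$ give the same kernel, hence the same point of the Quot functor.
\item \emph{Base change.} Compatibility with base change comes from Lemma~\ref{lem:compatibility-determinant}(c) and \eqref{eq:pullback-Z-scheme}.
\end{itemize}

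Next, I would show injectivity (monomorphism) by reconstructing the triple from the quotient. Given the kernel $\mathcal K$, Lemma~\ref{lem:compatibility-determinant}(b) identifies $\lambda$ with $\delta:\mathcal O^{\oplus3}\to\det(\mathcal K)^\vee$, up to the isomorphism $\theta$.
\begin{itemize}
\item \emph{Recovering $\mathcal L$.} The line bundle $\det(\mathcal K)^\vee\otimes p^*\mathcal O_{\PP^2}(-d)$ is trivial on every fiber. By cohomology and base change (seesaw), it is isomorphic to $q^*\mathcal M$ with $\mathcal M=q_*(\det(\mathcal K)^\vee\otimes p^*\mathcal O(-d))$ invertible on $T$.
\item \emph{Recovering $\mathbf f$.} Pushing forward $\delta\otimes p^*\mathcal O(-d)$ recovers the three sections up to the equivalence defining $\Trip_d$. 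This gives injectivity on $T$-points, compatibly with base change.
\end{itemize}

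For the image and openness, I would run the same construction in reverse on an arbitrary $T$-point of the Quot scheme whose geometric fibers $\mathcal F_t$ are torsion-free with $h^0(\ker\rho_t)=0$.
\begin{itemize}
\item \emph{Fiber structure.} A torsion-free rank-one sheaf on $\PP^2$ with Hilbert polynomial $P_j$ is $\mathcal I_{W}(d')$ with $W$ finite. Comparing the quadratic and linear coefficients forces $d'=d$ and $\operatorname{length}W=j$.
\item \emph{The kernel.} As in Lemma~\ref{lem:compatibility-determinant}(a), the depth lemma makes $\mathcal K_t$ locally free of rank two, so $\mathcal K$ is locally free with $\det\mathcal K_t\simeq\mathcal O(-d)$.
\item \emph{The triple.} The wedge map $\delta$ then defines, after the seesaw step above, a triple $(\mathcal M,\mathbf f)$. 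Because $\mathcal F_t\simeq \mathcal I_W(d)$ with $W$ finite, the three forms have no common factor. The condition $h^0(\mathcal K_t)=0$ says there are no degree-zero syzygies, i.e.\ the forms are linearly independent, so the triple lies in $\Trip_d^\circ$.
\item \emph{Membership in $\Trip_{d,j}$.} The inclusion $\mathcal F\otimes\widetilde{\mathcal M}^\vee\hookrightarrow\mathcal O$ is fiberwise injective, so Lemma~\ref{lem:flatness-finite-cokernel} gives that $\mathcal Z$ is finite flat of length $j$.
\item \emph{Openness.} Torsion-freeness of fibers of a flat family is an open condition; equivalently, the locus where $\mathcal K_t$ fails to be locally free is closed, by semicontinuity of $\mathcal Ext^1$ or of fiber rank. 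The vanishing $h^0(\mathcal K_t)=0$ is open by upper semicontinuity, using that $\mathcal K$ is $T$-flat.
\end{itemize}
Since the conditions in the statement define an open subscheme of $\Quot$, this identification of functors makes $i$ an open immersion onto it.

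With $Z_j$ identified as an open subscheme of the Quot scheme, the last assertion follows from standard Quot deformation theory. The tangent space at $[\rho]$ is $\Hom(\ker\rho,\mathcal F)=\Hom(\T_J,\mathcal I_Z(d))$, and $\Ext^1(\T_J,\mathcal I_Z(d))$ is an obstruction space.

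I expect the main obstacle to be the reverse construction, in particular producing the line bundle $\mathcal M$ on a general, possibly non-reduced, base $T$ and checking that the reconstructed triple maps back to the same quotient. I will handle this via the canonical factorization $\theta\circ\delta=\lambda$ of Lemma~\ref{lem:compatibility-determinant}, together with cohomology and base change for $q_*$ of a fiberwise trivial line bundle, using $h^1(\mathcal O_{\PP^2})=0$.
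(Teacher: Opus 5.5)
Your proposal is correct and follows the paper's proof essentially step for step: the same transformation $\gamma_j$ to the Quot functor, the same wedge/determinant reconstruction with cohomology and base change producing $\mathcal M$, Lemma~\ref{lem:flatness-finite-cokernel} for flatness of $\mathcal Z$, and openness of the two fiberwise conditions; the paper only packages your ``monomorphism plus image'' argument as an explicit inverse $\delta_j$ with $\delta_j\circ\gamma_j=\id$ and $\gamma_j\circ\delta_j=\id$. Two small fixes are needed. First, torsion-freeness of $\mathcal F_t$ is not equivalent to local freeness of $\mathcal K_t$ (for $d=1$, $j=0$, the quotient $\mathcal O_{\PP^2}^{\oplus3}\twoheadrightarrow\mathcal O_{\PP^2}\oplus\mathcal O_L(1)$ has locally free kernel without sections), so rely on openness of purity in flat families rather than that ``equivalently''; second, justify fiberwise injectivity of the wedge map $\mathcal F_t\to\det(\mathcal K_t)^\vee$ as the paper does, namely that it is an isomorphism where $\mathcal F_t$ is locally free and $\mathcal F_t$ is torsion-free.
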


\begin{proof}
Let $T$ be a $k$-scheme and let \(
[\mathcal L,\mathbf f]\in\Trip_{d,j}(T)\). The associated evaluation morphism has image
\[
\mathcal I_{\mathcal Z}
\subseteq
\mathcal O_{\PP^2_T},
\]
where $\mathcal Z\lar T$ is finite flat of degree $j$. After twisting
by
\[
q^*\mathcal L\otimes p^*\mathcal O_{\PP^2}(d),
\]
we obtain a $T$-flat quotient
\[
\mathcal O_{\PP^2_T}^{\oplus3}
\twoheadrightarrow
\mathcal I_{\mathcal Z}
\otimes q^*\mathcal L
\otimes p^*\mathcal O_{\PP^2}(d)
\]
with Hilbert polynomial
\[
P_j(t)=\binom{t+d+2}{2}-j.
\]
This construction is invariant under the equivalence defining
$\Trip_{d,j}$ and commutes with arbitrary base change. It therefore
defines a natural transformation
\[
\gamma_j:
\Trip_{d,j}
\longrightarrow
\QuotF_{\PP^2}^{P_j}
(\mathcal O_{\PP^2}^{\oplus3}).
\]

Let $U_{d,j}$ denote the subfunctor of the Quot functor consisting of
quotients
\[
\rho:
\mathcal O_{\PP^2_T}^{\oplus3}
\twoheadrightarrow\mathcal F
\]
such that every geometric fiber $\mathcal F_t$ is torsion-free and \(H^0(\PP^2_t,\ker\rho_t)=0\). Both conditions are open in flat families, the second by upper
semicontinuity. Hence $U_{d,j}$ is represented by an open subscheme
of the Quot scheme, and $\gamma_j$ factors through $U_{d,j}$.

We construct the inverse transformation \(\delta_j:U_{d,j}\longrightarrow\Trip_{d,j}\). Let
\[
\rho:
\mathcal O_{\PP^2_T}^{\oplus3}
\twoheadrightarrow\mathcal F
\]
be a $T$-point of $U_{d,j}$ and put \(\mathcal K\doteq\ker(\rho)\). Since $\mathcal F$ is $T$-flat, so is $\mathcal K$, and formation
of $\mathcal K$ commutes with base change. For every geometric
$t\lar T$ we have the associated short exact sequence
\[
0\longrightarrow\mathcal K_t
\longrightarrow\mathcal O_{\PP^2_t}^{\oplus3}
\longrightarrow\mathcal F_t
\longrightarrow0.
\]
Since $\mathcal F_t$ is torsion-free of rank one on the regular
surface $\PP^2_t$, the depth lemma shows that $\mathcal K_t$ is
locally free of rank two. Hence $\mathcal K$ is locally free of rank
two on $\PP^2_T$.

Set \(
\mathcal L'\doteq\det(\mathcal K)^\vee\). The wedge construction, as in Lemma~\ref{lem:compatibility-determinant} associated with the inclusion
\[
\mathcal K\hookrightarrow\mathcal O_{\PP^2_T}^{\oplus3}
\]
gives a canonical morphism \(
\mathcal O_{\PP^2_T}^{\oplus3}
\longrightarrow\mathcal L' \) which annihilates $\mathcal K$, and therefore factors as
\( \varphi:
\mathcal F\longrightarrow\mathcal L'\).

For a geometric point $t\lar T$, let
$V_t\subset\PP^2_t$ be the locus where $\mathcal F_t$ is locally
free. On $V_t$ the sequence
\[
0\longrightarrow\mathcal K_t|_{V_t}
\longrightarrow\mathcal O_{V_t}^{\oplus3}
\longrightarrow\mathcal F_t|_{V_t}
\longrightarrow0
\]
is an exact sequence of vector bundles, and taking determinants
shows that
\[
\varphi_t|_{V_t}:
\mathcal F_t|_{V_t}
\xlongrightarrow{\sim}
\det(\mathcal K_t)^\vee|_{V_t}.
\]
Hence $\ker(\varphi_t)$ has rank zero. Since
$\mathcal F_t$ is torsion-free, \(
\ker(\varphi_t)=0\), and thus $\varphi_t$ is injective for every geometric $t$.

Since $\varphi_t$ is injective for every geometric point $t\lar T$,
Lemma~\ref{lem:flatness-finite-cokernel} implies that
\[
\varphi:\mathcal F\longrightarrow\mathcal L'
\]
is injective and that \(\mathcal C\doteq\coker(\varphi)\) is $T$-flat.

For every geometric point $t\lar T$, the fixed Hilbert polynomial of
$\mathcal F_t$ gives
\[
\det(\mathcal K_t)^\vee\simeq\mathcal O_{\PP^2_t}(d).
\]
Indeed, from
\[
0\lar\mathcal K_t\lar\mathcal O_{\PP^2_t}^{\oplus3}
\lar\mathcal F_t\lar0
\]
we have
\[
\det(\mathcal K_t)^\vee\simeq\det(\mathcal F_t),
\]
while the Hilbert polynomial
\[
P_{\mathcal F_t}(m)
=
\binom{m+d+2}{2}-j
\]
shows that $\mathcal F_t$ has rank one and first Chern class $d$.
Thus
\[
\det(\mathcal F_t)\simeq\mathcal O_{\PP^2_t}(d).
\]

Consequently the exact sequence
\[
0\longrightarrow\mathcal F_t
\longrightarrow\mathcal O_{\PP^2_t}(d)
\longrightarrow\mathcal C_t
\longrightarrow0
\]
gives
\[
P_{\mathcal C_t}(m)=j.
\]
Hence $\mathcal C_t$ is zero-dimensional of length $j$.

Since $\mathcal L'$ is invertible and every geometric fiber
$\mathcal C_t$ has length $j$, the second part of
Lemma~\ref{lem:flatness-finite-cokernel} shows that
\[
\mathcal I_{\mathcal Z}
\doteq
\mathcal F\otimes(\mathcal L')^\vee
\hookrightarrow
\mathcal O_{\PP^2_T}
\]
defines a finite flat family
\(
\mathcal Z\subset\PP^2_T
\) of degree $j$, with \(
\mathcal O_{\mathcal Z}
\simeq
\mathcal C\otimes(\mathcal L')^\vee.
\)

Moreover, the line bundle
\[
\mathcal N
\doteq
\mathcal L'\otimes p^*\mathcal O_{\PP^2}(-d)
\]
restricts to $\mathcal O_{\PP^2}$ on every geometric fiber of $q$.
Since \(H^0(\PP^2,\mathcal O_{\PP^2})=k\) and \(H^1(\PP^2,\mathcal O_{\PP^2})=0\), cohomology and base change
\cite[Chapter~III, Theorem~12.11]{hartshorne2013algebraic}
shows that
\[
\mathcal M\doteq q_*\mathcal N
\]
is a line bundle on $T$ and that its formation commutes with base
change. The adjunction morphism
\[
q^*\mathcal M\longrightarrow\mathcal N
\]
restricts on every geometric fiber to the evaluation isomorphism
\[
H^0(\PP^2,\mathcal O_{\PP^2})\otimes
\mathcal O_{\PP^2}
\xlongrightarrow{\sim}
\mathcal O_{\PP^2}.
\]
Hence it is an isomorphism, and therefore \(\mathcal L'
\simeq q^*\mathcal M\otimes p^*\mathcal O_{\PP^2}(d)\).

The composition
\[
\mathcal O_{\PP^2_T}^{\oplus3}
\xlongrightarrow{\rho}
\mathcal F
\xlongrightarrow{\varphi}
\mathcal L'
\]
therefore defines a triple of degree-$d$ forms with coefficients in
$\mathcal M$. The condition 
\[
H^0(\PP^2_t,\mathcal K_t)=0
\] 
says that the three forms are linearly independent on every geometric fiber, and the fact that their base scheme $\mathcal Z_t$
is zero-dimensional implies that they have no common divisorial
factor. Thus we obtain an element
\(
\delta_j(T)(\rho)\in\Trip_{d,j}(T).
\)
All the constructions above commute with arbitrary base change, so
$\delta_j$ is a natural transformation.

By Lemma~\ref{lem:compatibility-determinant}, if one starts with
$[\mathcal L,\mathbf f]\in\Trip_{d,j}(T)$, applies $\gamma_j$, and
then performs the determinant reconstruction, one recovers the
original triple up to the equivalence defining $\Trip_{d,j}$.
Hence \(
\delta_j\circ\gamma_j=\id\).
Conversely, starting with $\rho\in U_{d,j}(T)$, the reconstructed
ideal sheaf satisfies
\[
\mathcal I_{\mathcal Z}\otimes\mathcal L'
=
\mathcal F,
\]
and the reconstructed evaluation morphism is
$\varphi\circ\rho$. Hence applying $\gamma_j$ recovers the original
quotient $\rho$, so \(
\gamma_j\circ\delta_j=\id.
\) Therefore, we describe a natural isomorphism \(
\Trip_{d,j}\simeq U_{d,j}
\) as functors. By Yoneda, \(
Z_j\simeq U_{d,j}\), and consequently
\[
i:Z_j\hookrightarrow
\Quot_{\PP^2}^{P_j}
(\mathcal O_{\PP^2}^{\oplus3})
\]
is an open immersion with the stated image.

Finally, for a point
\[
p=[f_1,f_2,f_3]\in Z_j,
\]
the corresponding quotient is
\[
0\longrightarrow\mathcal T_J
\longrightarrow\mathcal O_{\PP^2}^{\oplus3}
\longrightarrow\mathcal I_Z(d)
\longrightarrow0.
\]
The standard deformation theory of the Quot scheme (see, for example, \cite[Corollary 4.4.6]{sernesi2006deformations}) gives
\[
T_pZ_j
\simeq
\Hom(\mathcal T_J,\mathcal I_Z(d)),
\]
while \(
\Ext^1(\mathcal T_J,\mathcal I_Z(d))
\) is an obstruction space.
\end{proof}

Next, we review an important characterization from the deformation theory of sheaves.

\begin{Lemma}
Fix $d \geq 1$ and $j \geq 0$. For every flat first-order deformation $J_\epsilon$ of a particular point $[(f_1, f_2, f_3)] = [J]$ of $Z_j$, let $Z_{\epsilon} = V(J_\epsilon) \subset \mathbb{P}^2_{\epsilon} = \mathbb{P}^2 \times_k \mathbb{D}$. Then, there are induced extensions
$$
[\mathcal{I}_{Z_\epsilon}] \in \Ext^1(\mathcal{I}_Z, \mathcal{I}_Z) \text{ and }[\T_\epsilon] \in \Ext^1(\T_J, \T_J)
$$
where $Z_{\epsilon} = V(J_\epsilon)$, in the sense of flat first-order deformations of coherent sheaves of $\mathbb{P}^2$ over $k$.
\end{Lemma}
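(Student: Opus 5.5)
A first-order deformation of $[J]\in Z_j$ is a morphism $\mathbb D=\Spec k[\epsilon]/(\epsilon^2)\lar Z_j$ through $[J]$. By Lemma~\ref{lem:functors-of-triples} it is an element $[\mathcal L,\mathbf f]\in\Trip_{d,j}(\mathbb D)$. Since $\mathbb D$ is local, $\mathcal L\simeq\mathcal O_{\mathbb D}$, and the triple can be written as $J_\epsilon=(f_i+\epsilon g_i)$ with $g_i\in R_d$. Through the Quot description of Theorem~\ref{thm:open-Z-j-quot-scheme}, this point gives two exact sequences on $\PP^2_\epsilon$:
\[
0\lar\T_\epsilon\lar\mathcal O_{\PP^2_\epsilon}^{\oplus3}\lar\mathcal I_{Z_\epsilon}(d)\lar0,
\qquad
0\lar\mathcal I_{Z_\epsilon}\lar\mathcal O_{\PP^2_\epsilon}\lar\mathcal O_{Z_\epsilon}\lar0.
\]
Here $Z_\epsilon=V(J_\epsilon)$ is flat over $\mathbb D$ of degree $j$ by the definition of $\Trip_{d,j}$. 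Lemma~\ref{lem:compatibility-determinant} shows that $\T_\epsilon$ is locally free of rank two and that its formation commutes with base change, so $\T_\epsilon\otimes_{\mathbb D}k\simeq\T_J$. The same flatness argument shows that $\mathcal I_{Z_\epsilon}$ is $\mathbb D$-flat with $\mathcal I_{Z_\epsilon}\otimes_{\mathbb D}k\simeq\mathcal I_Z$.

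The key step is the standard correspondence. For a coherent sheaf $\mathcal G$ on $\PP^2$ and a $\mathbb D$-flat sheaf $\mathcal G_\epsilon$ on $\PP^2_\epsilon$ with $\mathcal G_\epsilon\otimes k\simeq\mathcal G$, tensor the exact sequence $0\lar k\xrightarrow{\epsilon}k[\epsilon]\lar k\lar0$ with $\mathcal G_\epsilon$ over $k[\epsilon]$. Flatness keeps this exact, so we get
\[
0\lar\mathcal G\xlongrightarrow{\epsilon}\mathcal G_\epsilon\lar\mathcal G\lar0,
\]
which is an extension of $\mathcal O_{\PP^2}$-modules once $\mathcal G_\epsilon$ is viewed via $\PP^2_\epsilon\lar\PP^2$. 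Its class lies in $\Ext^1(\mathcal G,\mathcal G)$. This is the usual identification of flat first-order deformations of $\mathcal G$ with $\Ext^1(\mathcal G,\mathcal G)$ \cite[Theorem~2.4.1 / Section~3.3]{sernesi2006deformations}: the $k[\epsilon]$-structure is recovered as the composite $\mathcal G_\epsilon\twoheadrightarrow\mathcal G\xrightarrow{\epsilon}\mathcal G_\epsilon$. Applying it to $\mathcal G=\mathcal I_Z$ and to $\mathcal G=\T_J$ gives the classes $[\mathcal I_{Z_\epsilon}]$ and $[\T_\epsilon]$.

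Next I check that the classes are well defined. An isomorphism of the data $(\mathcal L,\mathbf f)$ over $\mathbb D$ changes the ideal sheaf, and hence the kernel, only by a $\mathbb D$-automorphism inducing the identity mod $\epsilon$. Any such change yields an equivalent extension. Explicitly, $[\T_\epsilon]$ is the image of the tangent vector $\phi\in\Hom(\T_J,\mathcal I_Z(d))$, with $\phi(a)=\sum a_ig_i$ mod $J$, under the connecting map $\Hom(\T_J,\mathcal I_Z(d))\lar\Ext^1(\T_J,\T_J)$. Similarly, $[\mathcal I_{Z_\epsilon}]$ is the image of the Hilbert-scheme tangent vector in $\Hom(\mathcal I_Z,\mathcal O_Z)$ under the analogous connecting map to $\Ext^1(\mathcal I_Z,\mathcal I_Z)$. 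I would note this compatibility but not prove it in detail.

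The main obstacle is flatness of $\T_\epsilon$ and $\mathcal I_{Z_\epsilon}$ together with the fiber identifications. Everything then hinges on subscheme-flatness, which is exactly what membership in $Z_j$ provides; quotient-flatness would not suffice, as Example~\ref{ex:subscheme-flat-and-quot-flat} shows. Once Lemma~\ref{lem:compatibility-determinant} supplies these facts, the extension construction is formal.
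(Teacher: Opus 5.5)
Your proposal is correct and follows the same route as the paper: $\mathcal I_{Z_\epsilon}$ and $\T_\epsilon=\ker(\mathcal O_{\PP^2_\epsilon}^{\oplus3}\to\mathcal I_{Z_\epsilon}(d))$ are $\mathbb D$-flat with special fibers $\mathcal I_Z$ and $\T_J$, and the classical identification of flat first-order deformations of a coherent sheaf with $\Ext^1$ produces the two classes; you spell this out more explicitly than the paper, via Lemma~\ref{lem:compatibility-determinant} and the sequence $0\to k\to k[\epsilon]\to k\to 0$. One small correction to your closing aside: quotient-flatness implies subscheme-flatness by Lemma~\ref{lem:quot-flatness-implies-subscheme-flatness}, so it would suffice if available; Example~\ref{ex:subscheme-flat-and-quot-flat} only shows that it can fail along $Z_j$ and so cannot be assumed (and the connecting-map compatibility you sketch is not needed for this statement).
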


\begin{proof}
Note that a first-order deformation $J_\epsilon = (F_1, F_2, F_3)$ induces a first-order deformation of the ideal sheaf $\mathcal{I}_Z(d)$ by $\mathcal{I}_{Z_{\epsilon}}$. Moreover, the associated sheaf $\T_{\epsilon} = \ker( \mathcal{O}_{\mathbb{P}^2_{\epsilon}}^{\oplus 3} \twoheadrightarrow \mathcal{I}_{Z_\epsilon}(d) )$ is also a flat first-order deformation of $\T_J$, since it is flat over $\mathbb{D}$ and we recover $\T_J$ at the special fiber. The fact that these two Ext groups classify first-order deformations of coherent sheaves is classical (see, for example, \cite[Proposition 2.6]{hartshorne2010deformation}).
\end{proof}

Using the description in Theorem~\ref{thm:open-Z-j-quot-scheme},
$$
T_{[J]} Z_j \simeq \Hom(\T_J, \mathcal{I}_Z(d))
$$
for a triple $[J] \in Z_j$ with $Z = V(J)$, the association above for infinitesimal deformations can be presented as follows. After applying the functor $\Hom(-, \mathcal{I}_Z(d))$ to the short exact sequence
$$
0 \lar \T_J \lar \mathcal{O}_{\mathbb{P}^2}^{\oplus 3} \lar \mathcal{I}_Z(d) \lar 0,
$$
the induced connecting morphism
$$
\delta : \Hom(\T_J, \mathcal{I}_Z(d)) \lar \Ext^1(\mathcal{I}_Z(d), \mathcal{I}_Z(d)) \simeq \Ext^1(\mathcal{I}_Z, \mathcal{I}_Z)
$$
realizes the association of each first order infinitesimal deformation $J_\epsilon$ to the associated deformation of ideal sheaves $\mathcal{I}_{Z_\epsilon}(d)$ with $Z_{\epsilon} = V(J_\epsilon) \subset \mathbb{P}^2_{\epsilon}$. This can be seen by following the associated push-out construction.

When $[J] \in Z_j$ is a point with $\Ext^1(\T_J, \mathcal{I}_Z(d)) = 0$, then $[J]$ is a smooth point. When this occurs, we may compute the dimension of $Z_j$ at $[J]$ using the Hirzebruch-Riemann-Roch (HRR) Theorem, as follows:

\begin{Lemma}\label{lem:dimension-Z-j-smooth-points}
Let $d \geq 1$, $j \geq 0$ and let $[J] \in Z_j$ be a point such that $\Ext^1(\T_J, \mathcal{I}_Z(d))$ vanishes, where $Z = V(J)$. Then $Z_j$ is smooth at $[J]$, of dimension
$$
\dim_{[J]} Z_j = \frac{3}{2}d^2 + \frac{9}{2}d + 2 - j.
$$
In particular $\codim_{\mathbb{P}(W_d)} Z_j = j$ at $[J]$.
\end{Lemma}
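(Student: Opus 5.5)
By Theorem~\ref{thm:open-Z-j-quot-scheme}, $Z_j$ is open in $\Quot_{\PP^2}^{P_j}(\mathcal O_{\PP^2}^{\oplus3})$, with tangent space $\Hom(\T_J,\mathcal I_Z(d))$ and obstruction space $\Ext^1(\T_J,\mathcal I_Z(d))$ at $[J]$. The plan is to use the vanishing of the obstruction space for smoothness, and then compute $\hom(\T_J,\mathcal I_Z(d))$ as an Euler characteristic.

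\emph{Smoothness.} Since the obstruction space vanishes, the standard deformation theory of Quot schemes (the same reference as in Theorem~\ref{thm:open-Z-j-quot-scheme}) shows that the local ring of the Quot scheme at $[J]$ is smooth. It follows that $Z_j$ is smooth at $[J]$, of dimension
\[
\dim_{[J]}Z_j=\hom(\T_J,\mathcal I_Z(d)).
\]

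\emph{Reduction to an Euler characteristic.} The next step is to show that $\Ext^2(\T_J,\mathcal I_Z(d))=0$. By Serre duality on $\PP^2$,
\[
\Ext^2(\T_J,\mathcal I_Z(d))\simeq\Hom(\mathcal I_Z(d),\T_J(-3))^\vee .
\]
Since $\T_J\subset\mathcal O^{\oplus3}$ and $\mathcal I_Z$ has reflexive hull $\mathcal O$, this group embeds in
\[
\Hom(\mathcal I_Z(d),\mathcal O(-3)^{\oplus3})=H^0(\mathcal O(-d-3))^{\oplus3}=0.
\]
Combined with the hypothesis $\Ext^1=0$, this gives
\[
\dim_{[J]}Z_j=\chi(\T_J,\mathcal I_Z(d)).
\]

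\emph{Computing the Euler characteristic.} Because $\T_J$ is locally free of rank two (Lemma~\ref{lem:compatibility-determinant}), we have $\chi(\T_J,\mathcal I_Z(d))=\chi(\T_J^\vee\otimes\mathcal I_Z(d))$. Tensoring $0\to\mathcal I_Z\to\mathcal O\to\mathcal O_Z\to0$ with the bundle $\T_J^\vee(d)$ gives
\[
\chi(\T_J^\vee\otimes\mathcal I_Z(d))=\chi(\T_J^\vee(d))-2j.
\]
From the defining sequence, $\det\T_J\simeq\mathcal O(-d)$, so $\T_J^\vee\simeq\T_J(d)$. Twisting $0\to\T_J\to\mathcal O^{\oplus3}\to\mathcal I_Z(d)\to0$ by $2d$ then yields
\[
\chi(\T_J(2d))=3\binom{2d+2}{2}-\binom{3d+2}{2}+j.
\]
Putting these together,
\[
\dim_{[J]}Z_j=3\binom{2d+2}{2}-\binom{3d+2}{2}-j=\tfrac32d^2+\tfrac92d+2-j.
\]

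\emph{Codimension.} Finally, $\dim\PP(W_d)=3\binom{d+2}{2}-1=\tfrac32 d^2+\tfrac92 d+2$, so the codimension of $Z_j$ at $[J]$ is exactly $j$.

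The only non-formal point is the vanishing of $\Ext^2$. It relies on $\T_J$ being a subsheaf of a trivial bundle and on $\mathcal I_Z$ having trivial reflexive hull. Everything else is Riemann--Roch bookkeeping.
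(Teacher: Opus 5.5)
Your proof is correct. Its skeleton is the same as the paper's: vanishing of the Quot obstruction space gives smoothness with local dimension $\dim\Hom(\T_J,\mathcal I_Z(d))$, and once $\Ext^2(\T_J,\mathcal I_Z(d))$ is also shown to vanish, this dimension is an Euler characteristic.

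You differ from the paper in how you kill $\Ext^2$ and how you compute that Euler characteristic.

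\emph{The paper's route.} It applies $\Hom(\T_J,-)$ to the defining sequence. It then uses $H^2(\T_J^\vee)\simeq H^0(\T_J(-3))^\vee=0$ to kill $\Ext^2(\T_J,\T_J)$ and $\Ext^2(\T_J,\mathcal I_Z(d))$ simultaneously. Finally it evaluates $3\chi(\T_J^\vee)-\chi(\T_J\otimes\T_J^\vee)$ by Hirzebruch--Riemann--Roch, using the Chern characters of $\T_J$ and $\T_J^\vee$.

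\emph{Your route.} You get the $\Ext^2$ vanishing from Serre duality, together with $\T_J\subset\mathcal O^{\oplus3}$ and the fact that $\mathcal I_Z$ has reflexive hull $\mathcal O$. You then compute $\chi(\T_J^\vee\otimes\mathcal I_Z(d))$ directly from three inputs: the structure sequence of $Z$, the rank-two identity $\T_J^\vee\simeq\T_J(d)$, and the defining sequence twisted by $2d$. So only Euler characteristics of line bundles and of $\mathcal I_Z(3d)$ enter.

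\emph{What each buys.} Your argument is more elementary: there is no Chern-character algebra and no endomorphism bundle. The paper's argument places the tangent space in an exact sequence with $\Ext^\bullet(\T_J,\T_J)$, which is the deformation theory of the syzygy bundle itself. That same sequence is reused at free points to identify $\Ext^1(\T_J,\mathcal I_Z(d))\simeq\Ext^2(\T_J,\T_J)$.
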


\begin{proof}
Applying the functor $\Hom(\T_J, -)$ to the short exact sequence $
0 \lar \T_J \lar \mathcal{O}_{\mathbb{P}^2}^{\oplus 3} \lar \mathcal{I}_Z(d) \lar 0$, we get the long exact sequence
\begin{align}\label{eq:les-1-HRR-unobstructed}
0 &\lar \Hom(\T_J, \T_J) \lar H^0(\T_J^\vee)^3 \lar \Hom(\T_J, \mathcal{I}_Z(d)) \lar\\
&\lar \Ext^1(\T_J, \T_J) \lar H^1(\T_J^\vee)^3 \lar \Ext^1(\T_J, \mathcal{I}_Z(d)) = 0
\end{align}
and a short exact sequence 
\begin{equation}\label{eq:les-2-HRR-unobstructed}
0 \lar \Ext^2(\T_J, \T_J) \lar H^2(\T_J^\vee)^3 \lar \Ext^2(\T_J, \mathcal{I}_Z(d)) \lar 0.    
\end{equation}
From Serre duality, we get
$H^2(\T_J^\vee) \simeq H^0(\T_J(-3))^* = 0$ and thus both end-points of \eqref{eq:les-2-HRR-unobstructed} are zero. From additivity of dimensions at \eqref{eq:les-1-HRR-unobstructed}, we conclude
\begin{equation}\label{eq:additivity-Chernclasses-unobstructed}
\dim(\Hom(\T_J, \mathcal{I}_Z(d))) = 3\mathcal{X}(\T_J^\vee) - \mathcal{X}(\T_J \otimes \T_J^\vee)    
\end{equation}
since $\Ext^i(\T_J, \T_J) \simeq \Ext^i(\mathcal{O}_{\mathbb{P}^2}, \T_J \otimes \T_J^\vee) \simeq H^i(\T_J \otimes \T_J^\vee)$ for $i \geq 0$ (see, for example, \cite[Chapter III, Proposition 6.7]{hartshorne2013algebraic}). We now compute both Euler characteristics using the Hirzebruch-Riemann-Roch formula (see, for example, \cite[Corollary 15.2.1]{fulton2013intersection}). Since $\ch(\T_J) = (2, -d, j - d^2/2)$, we conclude $\ch(\T_J^\vee) = (2, d, j - d^2/2)$ and thus
\begin{align*}
\ch(\T_J \otimes \T_J^\vee) = \ch(\T_J) \cdot \ch(T_J^\vee) &= 4 + \left(4j - 3d^2 \right) H^2\\
\mathcal{X}(\T_J \otimes \T_J^\vee) = \int \ch(\T_J \otimes \T_J^\vee) \cdot \text{Todd}(\mathbb{P}^2) &= \int \left(4 + (4j - 3d^2)H^2\right) \cdot \left( 1 + \frac{3}{2}H + H^2 \right)\\
&= 4 + 4j - 3d^2.
\end{align*}
On the other side,
\begin{align*}
\mathcal{X}(\T_J^\vee) = \int \ch(\T_J^\vee) \cdot \text{Todd}(\mathbb{P}^2) &= \int \left( 2 + d H + \left(j - \frac{d^2}{2}\right)H^2 \right) \cdot \left( 1 + \frac{3}{2}H + H^2 \right)\\
&= 2 + \frac{3}{2}d + j - \frac{d^2}{2}.
\end{align*}
Putting these together with \eqref{eq:additivity-Chernclasses-unobstructed} and Theorem~\ref{thm:open-Z-j-quot-scheme}, we conclude
$$
\dim T_{[J]} Z_j = \dim \Hom(\T_J, \mathcal{I}_Z(d)) = \frac{3}{2}d^2 + \frac{9}{2}d + 2 - j.
$$
\end{proof}

We emphasize that the vanishing of the Quot obstruction group
\[
\Ext^1(\T_J,\mathcal I_Z(d))
\]
is sufficient, but not necessary, for smoothness of $Z_j$ at $[J]$.
Indeed, there are free points for which this group is nonzero although
$Z_j$ is smooth; see Corollary~\ref{cor:free-points-picture}.

We next compute the tangent dimension and the Quot obstruction group
at free points.

\begin{Proposition}\label{prop:dim-at-free-points}
Let $d \geq 1$, $j \geq 1$, and let $[f] = [J] \in Z_j$ be a degree-$d$ triple. If $\syz(J) \simeq R(-e)\oplus R(e-d)$, then 
\begin{itemize}
    \item[(a)] The dimension of the tangent space of $Z_j$ at $[f]$ is
$$
\dim(T_{[f]} Z_j) = \binom{d+e+2}{2} + \binom{2d-e+2}{2}- 2j + h^0(\mathcal{O}_{\mathbb{P}^2}(d-2e-3)).
$$
Alternatively,
\begin{align*}
\dim(T_{[f]} Z_j) &= \frac{1}{2}d^2 + de -e^2 + \frac{9}{2}d + 2 \hspace{3cm} \text{ when }2e\geq d-2,\\
\dim(T_{[f]} Z_j) &= \frac{1}{2}d^2 + de -e^2 + \frac{9}{2}d + 2 + \binom{d-2e-1}{2} \ \text{ when }2e\leq d-3.
\end{align*}
    \item[(b)] Whenever $2e \geq d-2$, the scheme $Z_j$ is smooth at $[f]$.
\end{itemize}
\end{Proposition}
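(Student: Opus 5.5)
The plan is to compute everything directly from the splitting $\T_J\simeq\mathcal O_{\PP^2}(-e)\oplus\mathcal O_{\PP^2}(e-d)$, using Theorem~\ref{thm:open-Z-j-quot-scheme} for both the tangent space and the obstruction space. Since $\T_J$ is split, I get
\[
T_{[f]}Z_j\simeq\Hom(\T_J,\mathcal I_Z(d))\simeq H^0(\mathcal I_Z(d+e))\oplus H^0(\mathcal I_Z(2d-e)),
\]
\[
\Ext^1(\T_J,\mathcal I_Z(d))\simeq H^1(\mathcal I_Z(d+e))\oplus H^1(\mathcal I_Z(2d-e)).
\]
For a zero-dimensional $Z$ of length $j$, one has $h^0(\mathcal I_Z(m))=h^0(\mathcal O_{\PP^2}(m))-j+h^1(\mathcal I_Z(m))$ for $m\geq0$. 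So everything reduces to computing $h^1(\mathcal I_Z(m))$ for $m=d+e$ and $m=2d-e$.

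To compute these, I will sheafify the free resolution. Freeness of $\syz(J)$ gives the exact sequence
\[
0\lar\mathcal O_{\PP^2}(-d-e)\oplus\mathcal O_{\PP^2}(-2d+e)\lar\mathcal O_{\PP^2}(-d)^{\oplus3}\lar\mathcal I_Z\lar0.
\]
Equivalently, I can twist the defining sequence $0\to\T_J\to\mathcal O^{\oplus3}\to\mathcal I_Z(d)\to0$. Because $H^1$ of line bundles on $\PP^2$ vanishes, the long exact sequence yields
\[
H^1(\mathcal I_Z(m))\simeq\ker\Bigl(H^2(\mathcal O(m-d-e))\oplus H^2(\mathcal O(m-2d+e))\lar H^2(\mathcal O(m-d))^{\oplus3}\Bigr).
\]
For $m=d+e$ the target is $H^2(\mathcal O(e))^{\oplus3}=0$ because $e\geq1$. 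The source is $H^2(\mathcal O)\oplus H^2(\mathcal O(2e-d))$, whose dimension is $h^0(\mathcal O(d-2e-3))$ by Serre duality. For $m=2d-e$ the source is $H^2(\mathcal O(d-2e))\oplus H^2(\mathcal O)$. This vanishes because minimality of $e$ forces $e\leq d-e$, so $d-2e\geq0$.

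Combining these, I obtain
\[
\dim T_{[f]}Z_j=\binom{d+e+2}{2}+\binom{2d-e+2}{2}-2j+h^0(\mathcal O(d-2e-3)),
\]
which is the first formula in (a). For the alternative expressions, I substitute $j=d(d-e)+e^2$. This value follows from the Bourbaki degree formula \eqref{eq:Bourbaki-deg-formula} with $\Bour(J)=0$. Expanding the binomials then gives $\tfrac12d^2+de-e^2+\tfrac92d+2$. The correction term $h^0(\mathcal O(d-2e-3))=\binom{d-2e-1}{2}$ is nonzero exactly when $2e\leq d-3$.

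For (b), the computation above shows that $\Ext^1(\T_J,\mathcal I_Z(d))$ has dimension $h^0(\mathcal O(d-2e-3))$, which is zero when $2e\geq d-2$. Smoothness of $Z_j$ at $[f]$ then follows from Theorem~\ref{thm:open-Z-j-quot-scheme}, or equivalently from Lemma~\ref{lem:dimension-Z-j-smooth-points}. As a consistency check, the dimension from that lemma should match the tangent dimension at such $j$. The only delicate points are the inequality $e\leq d-e$ and the routine binomial expansion; I do not expect a genuine obstacle.
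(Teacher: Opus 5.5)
Your proposal is correct and follows the paper's proof for (a): the splitting $\Hom(\T_J,\mathcal I_Z(d))\simeq H^0(\mathcal I_Z(d+e))\oplus H^0(\mathcal I_Z(2d-e))$, the sheafified Hilbert--Burch resolution to compute the two $h^1$ terms (using $e\leq d/2$ for the vanishing at $m=2d-e$), and the substitution $j=d(d-e)+e^2$. For (b) you read off $\Ext^1(\T_J,\mathcal I_Z(d))\simeq H^1(\mathcal I_Z(d+e))\oplus H^1(\mathcal I_Z(2d-e))$ directly from the splitting and reuse those computations, whereas the paper identifies this group with $\Ext^2(\T_J,\T_J)\simeq H^2(\T_J\otimes\T_J^\vee)$; both give $h^0(\mathcal O_{\PP^2}(d-2e-3))$, so the difference is only cosmetic.
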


\begin{proof}
If $J$ is free, then $\T_J \simeq \mathcal{O}_{\mathbb{P}^2}(-e) \oplus \mathcal{O}_{\mathbb{P}^2}(e-d)$ with $1 \leq e \leq \lfloor d/2 \rfloor$ the initial degree. By Theorem~\ref{thm:open-Z-j-quot-scheme}, there is an isomorphism
$$
T_{[f]} Z_j \simeq \Hom(\T_J, \mathcal{I}_Z(d)) \simeq H^0(\mathcal{I}_Z(d+e)) \oplus H^0(\mathcal{I}_Z(2d-e)).
$$
From the short exact sequence
$$
0 \lar \mathcal{I}_Z(m) \lar \mathcal{O}_{\mathbb{P}^2}(m) \lar \mathcal{O}_Z \lar 0
$$
and using $h^1(\mathcal{O}_{\mathbb{P}^2}(m)) = 0$, we obtain
$$
h^0(\mathcal{I}_Z(m)) = \binom{m+2}{2}-j+h^1(\mathcal{I}_Z(m))
$$
for $m \geq 0$ and $j = \deg(R/J)$. Denoting by $\mathcal{F}_0 = \mathcal{O}_{\mathbb{P}^2}^{\oplus 3}(-d)$ and $\mathcal{F}_1 = \mathcal{O}_{\mathbb{P}^2}(-e-d) \oplus \mathcal{O}_{\mathbb{P}^2}(-2d+e)$, the Hilbert-Burch resolution for $\mathcal{I}_Z$ is of the form:
$$
0 \lar \mathcal{F}_1 \xlongrightarrow{\psi} \mathcal{F}_0 \lar \mathcal{I}_Z  \lar 0.
$$
Twisting by $\mathcal{O}_{\mathbb{P}^2}(m)$ and taking cohomology yields a long exact sequence
$$
0 \lar H^1(\mathcal{I}_Z(m)) \lar H^2(\mathcal{F}_1(m)) \xlongrightarrow{\alpha} H^2(\mathcal{F}_0(m)) \lar H^2(\mathcal{I}_Z(m)) = 0
$$
giving $
h^1(\mathcal{I}_Z(m)) = \dim \ker\left( H^2(\mathcal{F}_1(m)) \xlongrightarrow{\alpha} H^2(\mathcal{F}_0(m)) \right)$. Evaluating first on $m = d+e$, we obtain:
\[
\mathcal F_1(d+e)
=
\mathcal O_{\PP^2}
\oplus
\mathcal O_{\PP^2}(2e-d),
\qquad
\mathcal F_0(d+e)
=
\mathcal O_{\PP^2}(e)^{\oplus3}.
\]
Since $e \geq 1$, $H^2(\mathcal{F}_0(d+e)) = 0$, so $\alpha = 0$. Moreover, $H^2(\mathcal{O}_{\mathbb{P}^2}) = 0$, so
$$
h^1(\mathcal{I}_Z(d+e)) = h^2(\mathcal{O}_{\mathbb{P}^2}(2e-d)) = h^0(\mathcal{O}_{\mathbb{P}^2}(d-2e-3))
$$
by Serre duality. On the other hand, evaluating for $m = 2d-e$, we obtain
$$
\mathcal{F}_1(2d-e) = \mathcal{O}_{\mathbb{P}^2}(d-2e) \oplus \mathcal{O}_{\mathbb{P}^2},
$$
and since $e \leq \lfloor d/2\rfloor$, then $H^2(\mathcal{O}_{\mathbb{P}^2}) = H^2(\mathcal{O}_{\mathbb{P}^2}(d-2e)) = 0$ and thus $h^1(\mathcal{I}_Z(2d-e)) = 0$. From this, $(a)$ follows.

From the splitting of $\T_J$, it follows that $\Ext^1(\T_J, \mathcal{O}_{\mathbb{P}^2}^{\oplus 3}) = 0$ and
$$
\Ext^2(\T_J, \mathcal{O}_{\mathbb{P}^2}^{\oplus 3}) \simeq H^2(\T_J^\vee)^3 \simeq H^2(\mathcal{O}_{\mathbb{P}^2}(e))^3 \oplus H^2(\mathcal{O}_{\mathbb{P}^2}(d-e))^3 = 0,
$$
since $e \geq 1$ and $d-e \geq \lceil d/2 \rceil \geq 0$. Then, applying the functor $\Hom(\T_J, -)$ to the short exact sequence
$$
0 \lar \T_J \lar \mathcal{O}_{\mathbb{P}^2}^{\oplus 3} \lar \mathcal{I}_Z(d) \lar 0,
$$
we obtain an isomorphism
\[
\Ext^1(\T_J,\mathcal I_Z(d))
\simeq
\Ext^2(\T_J,\T_J)
\simeq
H^2(\T_J\otimes\T_J^\vee).
\]
Thus the obstruction group furnished by the Quot deformation theory
is
\[
\Ext^1(\T_J,\mathcal I_Z(d))
\simeq
H^0\bigl(\mathcal O_{\PP^2}(d-2e-3)\bigr)^\vee.
\]
In particular, it vanishes whenever $2e\geq d-2$, and~(b) follows.
\end{proof}

\section{Mapping triples to associated closed subsets}\label{sec:hilbert-map}

We now study the natural morphism
\[
\Phi_j:Z_j\longrightarrow\Hilbert^j(\PP^2),
\qquad
[J]\longmapsto V(J).
\]
By Fogarty's theorem
\cite[Proposition~2.3 and Theorem~2.4]{Fogarty1968},
$\Hilbert^j(\PP^2)$ is smooth and irreducible of dimension $2j$.
After constructing $\Phi_j$, we describe its differential and derive
smoothness and irreducibility criteria for the strata $Z_j$.

\begin{Theorem}\label{thm:hilb-map-j}
For every $d\geq1$ and $j\geq0$, there is a morphism
\[
\begin{aligned}
\Phi_j:Z_j&\longrightarrow\Hilbert^j(\PP^2),\\
[f_1:f_2:f_3]
&\longmapsto
V(f_1,f_2,f_3).
\end{aligned}
\]
Moreover, for every $[Z]\in\Hilbert^j(\PP^2)$, the fiber
$\Phi_j^{-1}([Z])$ is, whenever nonempty, an open subset of
\[
\PP\bigl(H^0(\mathcal I_Z(d))^{\oplus3}\bigr).
\]
\end{Theorem}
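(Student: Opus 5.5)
The morphism $\Phi_j$ will be built functorially, using the description of $Z_j$ as representing $\Trip_{d,j}$ (Lemma~\ref{lem:functors-of-triples}). For a $T$-point $[\mathcal L,\mathbf f]\in\Trip_{d,j}(T)$, the base scheme $\mathcal Z_{(\mathcal L,\mathbf f)}\subset\PP^2_T$ is, by the definition of $\Trip_{d,j}$, $T$-flat with geometric fibers of length $j$. By Lemma~\ref{lem:flatness-finite-cokernel} it is moreover finite flat of degree $j$, so it is a $T$-point of the Hilbert functor $\HilbertF^j_{\PP^2}$. Two further checks are needed.

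\begin{enumerate}
\item The construction depends only on the equivalence class of $(\mathcal L,\mathbf f)$. This was already observed in the proof of Lemma~\ref{lem:functors-of-triples}: an isomorphism $\alpha$ only changes the source of $\tau$, not its image ideal.
\item The construction commutes with base change, which is exactly \eqref{eq:pullback-Z-scheme}.
\end{enumerate}

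Hence we get a natural transformation $\Trip_{d,j}\to\HilbertF^j_{\PP^2}$, and Yoneda's lemma yields $\Phi_j$. On $k$-points it sends $[f_1:f_2:f_3]$ to $V(f_1,f_2,f_3)$, as stated.

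For the fibers, fix $[Z]\in\Hilbert^j(\PP^2)$. On $k$-points, $[f]\in\Phi_j^{-1}([Z])$ means that $[f]\in U_d$ and $V(f_1,f_2,f_3)=Z$ as schemes. This forces each $f_i\in H^0(\mathcal I_Z(d))$, so $[f]$ lies in the linear subspace $\PP(H^0(\mathcal I_Z(d))^{\oplus3})\subset\PP(W_d)$. Conversely, a triple in this linear subspace has $V(J)\supseteq Z$, with equality exactly when $\deg V(J)=j$, provided $J$ also has height $\ge 2$. So the set-theoretic fiber is
\[
P_Z\cap U_d\cap\{\deg V(J)\le j\},\qquad P_Z\doteq\PP\bigl(H^0(\mathcal I_Z(d))^{\oplus3}\bigr).
\]
The condition $\deg V(J)\le j$ is open, because the Hilbert polynomial of fibers of the finite part of the universal base scheme is upper semicontinuous. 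More precisely, over $P_Z\cap U_d$ the locus where the fiber length is at most $j$ is open, since fiber length of a finite morphism is upper semicontinuous. Intersecting these open conditions gives an open subset of $P_Z$.

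To upgrade this to a scheme-theoretic statement, I would work on $T$-points. A $T$-point of $P_Z\cap U_d$ whose universal base scheme $\mathcal Z$ contains $Z\times T$ and has fibers of length exactly $j$ gives a closed immersion $Z\times T\hookrightarrow\mathcal Z$. Both sides are finite flat of degree $j$ over $T$, so this immersion is an isomorphism: the kernel of $\mathcal O_{\mathcal Z}\to\mathcal O_{Z\times T}$ is flat of rank $0$, hence zero. Therefore the $T$-points of $\Phi_j^{-1}([Z])=Z_j\times_{\Hilbert^j}\{[Z]\}$ coincide with those of the open subscheme of $P_Z$ described above. The one remaining point is that $T$-points of $P_Z$ are exactly the triples whose universal ideal contains $\mathcal I_{Z\times T}$.

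The main obstacle is this identification of the fiber product with an open subscheme of $P_Z$, rather than merely a bijection on points. The key is the flat degree-$j$ comparison argument, together with the fact that the flattening stratum $Z_j$ restricted to $P_Z$ is open: on $P_Z\cap U_d$, the base scheme always contains $Z$, and the length-$j$ locus is exactly where the base scheme equals $Z\times T$, hence is automatically flat. If the $T$-point argument becomes delicate, I would settle for the statement on reduced structures, which is what the subsequent smoothness arguments use.
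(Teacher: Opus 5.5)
Your construction of \(\Phi_j\) is the same as the paper's. It uses the natural transformation \(\Trip_{d,j}\to\HilbertF^j\) given by the base scheme, base-change compatibility from \eqref{eq:pullback-Z-scheme}, and Yoneda.

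For the fibers you take a different route. The paper describes \(\Phi_j^{-1}([Z])\) as the triples in \(P_Z=\PP(H^0(\mathcal I_Z(d))^{\oplus3})\) that are linearly independent and for which \(\sigma_{[g]}:\mathcal O_{\PP^2}^{\oplus3}\to\mathcal I_Z(d)\) is surjective. It proves that surjectivity is an open condition by noting that its failure locus is \(q(\Supp\coker\Sigma)\), which is closed because \(q:\PP^2\times P_Z\to P_Z\) is proper. You instead use the containment \(Z\subseteq V(J)\), valid for every triple in \(P_Z\), to replace the condition \(V(J)=Z\) by \(\deg V(J)\le j\). You then get openness from upper semicontinuity of fiber length for the finite universal base scheme over \(P_Z\cap U_d\). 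Surjectivity of \(\sigma_{[g]}\) onto \(\mathcal I_Z(d)\) is exactly the equality \(V(J)=Z\), so both arguments cut out the same open set. The paper's version avoids any length count. Yours fits the semicontinuity underlying the stratification, and it also sketches a scheme-theoretic identification of \(Z_j\times_{\Hilbert^j(\PP^2)}\{[Z]\}\) with this open subscheme, which the paper does not attempt.

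Two small corrections are needed in that scheme-theoretic step.
\begin{enumerate}
\item The \(T\)-points of \(P_Z\) are the triples whose ideal is contained in \(\mathcal I_{Z\times T}\), i.e.\ whose base scheme contains \(Z\times T\). You wrote the reverse inclusion.
\item For a \(T\)-point of the open subscheme of \(P_Z\), you cannot assume a priori that \(\mathcal Z\) is flat. Argue instead as follows. The map \(\pi_*\mathcal O_{\mathcal Z}\to\pi_*\mathcal O_{Z\times T}\) is a surjection onto a locally free sheaf of rank \(j\), and both sides have \(j\)-dimensional fibers. Since the target is flat, the kernel has zero fibers, so it vanishes by Nakayama. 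This gives \(\mathcal Z=Z\times T\), and flatness follows.
\end{enumerate}
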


\begin{proof}
Let $\HilbertF^j_{\mathbb{P}^2_k}$ denote the usual Hilbert functor of flat families of closed subschemes of $\mathbb{P}^2$ with constant Hilbert polynomial $j$. We define the natural transformation, over each $T \in \Sch_k$:
\begin{align*}
\eta_j(T) : \Trip_{d,j}(T) &\lar \HilbertF^j_{\mathbb{P}^2/k}(T)\\
[\mathcal{L}, \textbf{f}] &\longmapsto \mathcal{Z}_{[\mathcal{L}, \textbf{f}]} \subset \mathbb{P}^2_T.
\end{align*}
Since the pullback by morphisms $h: S \lar T$ corresponds to the base-change of $\mathcal{Z}$, as shown in Lemma~\ref{lem:functors-of-triples}, it follows that $\eta_j$ is a natural transformation, and since 
$$
\Trip_{d,j} \simeq \Hom(-, Z_j) \text{ and } \HilbertF^j_{\mathbb{P}^2_k} \simeq \Hom(-, \Hilbert^j(\mathbb{P}^2)),
$$
the existence of the morphism $\Phi_j$ follows from Yoneda's Lemma.

Let $[Z] \in \Hilbert^j(\mathbb{P}^2)$ with $\Phi_j([f]) = Z$. Then
$$
\Phi_j^{-1}([Z]) = \{[g]=[g_1, g_2, g_3] \in \mathbb{P}(H^0(\mathcal{I}_Z(d))^{3}) : \dim_k \{g_1, g_2, g_3\} = 3\text{ and }\sigma_{[g]} \text{ is surjective}\},
$$
where $\sigma_{[g]} = (g_1, g_2, g_3): \mathcal{O}_{\mathbb{P}^2}^{\oplus 3} \lar \mathcal{I}_Z(d)$ is the associated morphism to the triple. We claim both conditions are open inside the space $\mathbb{P}(H^0(\mathcal{I}_Z(d))^3)$: the first is given by $g_1 \wedge g_2 \wedge g_3 \neq 0$, so it is clearly the complement of a Zariski closed subset. 

It remains to show that the condition that
\[
\sigma_{[g]}:
\mathcal O_{\PP^2}^{\oplus3}
\longrightarrow
\mathcal I_Z(d)
\]
be surjective is open. To see this, we set \(P_Z\doteq
\PP\bigl(H^0(\mathcal I_Z(d))^{\oplus3}\bigr)\)
and let \(p:\PP^2\times P_Z\lar\PP^2\) and \(q:\PP^2\times P_Z\lar P_Z\) be the projections. The universal triple determines a morphism
\[
\Sigma:
\mathcal O_{\PP^2\times P_Z}^{\oplus3}
\otimes q^*\mathcal O_{P_Z}(-1)
\longrightarrow
p^*\mathcal I_Z(d).
\]
Let \(
\mathcal C\doteq\coker(\Sigma)\). The support \(
\Supp(\mathcal C)
\subseteq
\PP^2\times P_Z\) is closed. A point $[g]\in P_Z$ corresponds to a surjective morphism
$\sigma_{[g]}$ if and only if
\[
\Supp(\mathcal C)\cap
\bigl(\PP^2\times\{[g]\}\bigr)
=
\varnothing.
\]
Since $q$ is proper, the image \(q(\Supp(\mathcal C))\) is closed. Therefore the locus \(P_Z\setminus q(\Supp(\mathcal C))\) on which $\sigma_{[g]}$ is surjective is open. Intersecting this open subset with the open locus where
\[
g_1\wedge g_2\wedge g_3\neq0
\]
gives precisely $\Phi_j^{-1}([Z])$. Hence every fiber of $\Phi_j$ is
an open subset of \(\PP\bigl(H^0(\mathcal I_Z(d))^{\oplus3}\bigr)\).
\end{proof}

In the present work, we use the morphism
\[
\Phi_j:Z_j\longrightarrow\Hilbert^j(\PP^2)
\]
together with the irreducibility of its fibers and of
$\Hilbert^j(\PP^2)$ to deduce irreducibility of several strata
$Z_j$. We record the argument here for later use.

\begin{Lemma}\label{lem:irreducibility-from-Hilbert-map}
Assume that $Z_j$ is nonempty and that
\[
\Phi_j:Z_j\longrightarrow\Hilbert^j(\PP^2)
\]
is smooth. Then $Z_j$ is irreducible and
$\Phi_j(Z_j)$ is a dense open subset of
$\Hilbert^j(\PP^2)$.
\end{Lemma}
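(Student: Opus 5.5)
The argument combines three facts: smooth morphisms are open, $\Hilbert^j(\PP^2)$ is irreducible by Fogarty, and the fibers of $\Phi_j$ are irreducible by Theorem~\ref{thm:hilb-map-j}.

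\emph{Openness of the image.} Since $\Phi_j$ is smooth, it is flat and locally of finite presentation, hence open. Therefore $\Phi_j(Z_j)$ is a nonempty open subset of $\Hilbert^j(\PP^2)$. As $\Hilbert^j(\PP^2)$ is irreducible of dimension $2j$, any nonempty open subset of it is dense and irreducible. This settles the second assertion.

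\emph{Irreducibility of $Z_j$.} For each $[Z]$ in the image, Theorem~\ref{thm:hilb-map-j} identifies the fiber $\Phi_j^{-1}([Z])$ with a nonempty open subset of the projective space $\PP\bigl(H^0(\mathcal I_Z(d))^{\oplus3}\bigr)$. Such a fiber is irreducible. Smoothness of $\Phi_j$ shows that the fibers are also smooth, hence reduced. The claim then follows from the standard lemma: an open morphism of finite type onto an irreducible base, all of whose fibers are irreducible, has irreducible source.

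To prove that lemma here, I would argue by contradiction. Suppose $Z_j = A\cup B$ with $A,B$ proper closed subsets. For each $y$ in the image, the fiber $\Phi_j^{-1}(y)$ is irreducible, so it lies entirely in $A$ or entirely in $B$. Set $Y_A=\{y : \Phi_j^{-1}(y)\subseteq A\}$ and define $Y_B$ similarly, so that $Y_A\cup Y_B=\Phi_j(Z_j)$. The complement $Z_j\setminus A$ is open, and its image $\Phi_j(Z_j\setminus A)$ is open, because $\Phi_j$ is open. That image is exactly the complement of $Y_A$ in $\Phi_j(Z_j)$. Hence $Y_A$ is closed in $\Phi_j(Z_j)$, and by the same argument so is $Y_B$. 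Since $\Phi_j(Z_j)$ is irreducible, one of them, say $Y_A$, equals $\Phi_j(Z_j)$. Then every fiber lies in $A$, so $Z_j=A$, which is a contradiction.

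\emph{Main obstacle.} The delicate point is the irreducibility of the fibers over non-closed points. The lemma above is stated for scheme-theoretic points, and the fiber over a generic point must also be irreducible. I would handle this in one of two ways. The first option is to restrict the whole argument to closed points, using that $Z_j$ is of finite type over $k$: an irreducible decomposition $A\cup B$ is determined by closed points, and the image sets $Y_A,Y_B$ can be treated as constructible sets whose closed points suffice. The second option is to invoke the fact that $\Phi_j$ is smooth, hence flat with geometrically reduced fibers. The fibers over closed points are then geometrically irreducible open subsets of projective spaces, and the set of points with geometrically irreducible fiber is constructible by EGA~IV~9.7.7. Either route makes the generic fibers irreducible as well.

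Finally, I would note that only smoothness at points of $Z_j$ is needed, not surjectivity of $\Phi_j$.
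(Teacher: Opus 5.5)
Your proof is correct and follows essentially the same route as the paper. Both use openness of the smooth morphism $\Phi_j$, irreducibility of $\Hilbert^j(\PP^2)$, and irreducibility of the fibers from Theorem~\ref{thm:hilb-map-j}, combined with the standard topological lemma; you phrase that lemma via a closed cover $Z_j=A\cup B$, whereas the paper shows that any two nonempty opens $U,V\subseteq Z_j$ have intersecting images and then meet inside a common irreducible fiber, which only requires a single closed point of $\Phi_j(U)\cap\Phi_j(V)$ (one exists since this is a nonempty open subset of a scheme of finite type over $k$) and so avoids the non-closed-fiber issue you handle via the Jacobson property or EGA~IV~9.7.7.
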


\begin{proof}
Since $\Phi_j$ is smooth, it is open. Hence \(B_j\doteq\Phi_j(Z_j)
\) is a nonempty open subset of the irreducible scheme
$\Hilbert^j(\PP^2)$, and therefore is irreducible.

By Theorem~\ref{thm:hilb-map-j}, every nonempty fiber of $\Phi_j$
is an open subset of a projective space, and hence is irreducible.
Now let $U,V\subseteq Z_j$ be nonempty open subsets. Since $\Phi_j$
is open, $\Phi_j(U)$ and $\Phi_j(V)$ are nonempty open subsets of
$B_j$, so they intersect. Choose
\[
[Z]\in\Phi_j(U)\cap\Phi_j(V).
\]
Then
\[
U\cap\Phi_j^{-1}([Z])
\qquad\text{and}\qquad
V\cap\Phi_j^{-1}([Z])
\]
are nonempty open subsets of the irreducible fiber
$\Phi_j^{-1}([Z])$, hence intersect. Thus $U\cap V\neq\emptyset$,
and $Z_j$ is irreducible.

Finally, $B_j$ is nonempty and open in the irreducible scheme
$\Hilbert^j(\PP^2)$, so it is dense.
\end{proof}

Theorem~\ref{thm:open-Z-j-quot-scheme} enables a description of the differential map to the Hilbert scheme:

\begin{Proposition}\label{prop:differential-Hilb-j}
Let $d \geq 1$ and $j \geq 1$, and let $\Phi_j : Z_j \lar \Hilbert^j(\mathbb{P}^2)$ be the morphism defined in Theorem~\ref{thm:hilb-map-j}. Let us fix a point $p = [f_1, f_2, f_3] \in Z_j$ generating the ideal $J = (f_1, f_2, f_3)$ and let $\Phi_j(p) = [Z]$ as a subscheme on $\mathbb{P}^2$. Then, the derivative of $\Phi_j$ at $p$
$$
d_{p}\Phi_j : T_p Z_j \simeq \Hom(\T_J, \mathcal{I}_Z(d)) \lar \Hom(\mathcal{I}_Z, \mathcal{O}_Z) \simeq T_{\Phi_j(p)}\Hilbert^j(\mathbb{P}^2)
$$
may be described via a commutative diagram 
\begin{center}
\begin{tikzcd}
{\Hom(\T_J,\mathcal I_Z(d))}
    \arrow[rr,"\delta"]
    \arrow[d,"d_p\Phi_j"']
&&
{\Ext^1(\mathcal I_Z(d),\mathcal I_Z(d))}
    \arrow[d,"\tau","\simeq"']\\
{\Hom(\mathcal I_Z,\mathcal O_Z)}
    \arrow[rr,"\partial",hook]
&&
{\Ext^1(\mathcal I_Z,\mathcal I_Z)}
\end{tikzcd}
\end{center}
where $\delta$ is the connecting morphism obtained after applying $\Hom(-, \mathcal{I}_Z(d))$ to the short exact sequence
$$
0 \lar \T_J \lar \mathcal{O}_{\mathbb{P}^2}^{\oplus 3} \lar \mathcal{I}_Z(d) \lar 0,
$$
and $\partial$ is the connecting morphism obtained by applying $\Hom(\mathcal{I}_Z,-)$ to 
$$
0 \lar \mathcal{I}_Z \lar \mathcal{O}_{\mathbb{P}^2} \lar \mathcal{O}_Z \lar 0.
$$
In particular, one concludes that
\[
\ker(d_p\Phi_j)
\simeq
\frac{H^0(\mathcal I_Z(d))^{\oplus3}}
{k\cdot(f_1,f_2,f_3)}
\]
and there is a natural injection \(
\coker(d_p\Phi_j)
\hookrightarrow
H^1(\mathcal I_Z(d))^{\oplus3}\). Moreover
\[
\dim\ker(d_p\Phi_j)
=
3h^0(\mathcal I_Z(d))-1,
\]
and, if \(
H^1(\mathcal I_Z(d))=0\), then $d_p\Phi_j$ is surjective.
\end{Proposition}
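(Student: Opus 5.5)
The plan is to realize $d_p\Phi_j$ functorially on first-order deformations and then read off its kernel and cokernel from long exact sequences. A tangent vector at $p$ is a $\mathbb D$-point of $Z_j$, $\mathbb D=\Spec k[\epsilon]/(\epsilon^2)$, namely a triple $J_\epsilon=(f_i+\epsilon g_i)$ with flat base scheme $Z_\epsilon$. Under Theorem~\ref{thm:open-Z-j-quot-scheme} it corresponds to a homomorphism $\phi\in\Hom(\T_J,\mathcal I_Z(d))$. Concretely, $\phi$ sends a local syzygy $(a_1,a_2,a_3)$ to $\sum a_ig_i$ modulo the image of $J$. Its image under $d_p\Phi_j$ is the $\mathbb D$-point $Z_\epsilon$ of $\Hilbert^j(\PP^2)$.

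The first step is commutativity of the square. The pushout of $0\to\T_J\to\mathcal O^{\oplus3}\to\mathcal I_Z(d)\to0$ along $\phi$ is the extension class $\delta(\phi)\in\Ext^1(\mathcal I_Z(d),\mathcal I_Z(d))$. As in the remark following the deformation lemma, this is the class of the flat deformation $\mathcal I_{Z_\epsilon}(d)$, viewed as an extension of $\mathcal I_Z(d)$ by $\epsilon\mathcal I_Z(d)$. On the Hilbert side, $\partial$ sends $\psi\in\Hom(\mathcal I_Z,\mathcal O_Z)$ to the class of the deformed ideal $\mathcal I_{Z_\epsilon}$, obtained by pulling back $0\to\mathcal I_Z\to\mathcal O\to\mathcal O_Z\to0$ appropriately. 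Untwisting by $\tau$, both composites are therefore the class of $\mathcal I_{Z_\epsilon}$, and the diagram commutes.

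Injectivity of $\partial$ follows from the long exact sequence $\Hom(\mathcal I_Z,\mathcal O)\to\Hom(\mathcal I_Z,\mathcal O_Z)\xrightarrow{\partial}\Ext^1(\mathcal I_Z,\mathcal I_Z)$. Since $\Hom(\mathcal I_Z,\mathcal O_{\PP^2})=H^0(\mathcal O)=k$ (because $\mathcal I_Z^{\vee}=\mathcal O$ in codimension two), the map $H^0(\mathcal O)\to\Hom(\mathcal I_Z,\mathcal O_Z)$ is zero: a constant restricted to $\mathcal I_Z$ lands in $\mathcal I_Z$. Hence $\partial$ is injective and
\[
\ker d_p\Phi_j=\ker\delta .
\]
By the long exact sequence for $\Hom(-,\mathcal I_Z(d))$,
\[
\ker\delta=\im\bigl(\Hom(\mathcal O^{\oplus3},\mathcal I_Z(d))\to\Hom(\T_J,\mathcal I_Z(d))\bigr)=H^0(\mathcal I_Z(d))^{\oplus3}/\Hom(\mathcal I_Z(d),\mathcal I_Z(d)).
\]
Here $\Hom(\mathcal I_Z,\mathcal I_Z)=k$, embedded via $(f_1,f_2,f_3)$. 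This gives the kernel and $\dim\ker=3h^0(\mathcal I_Z(d))-1$.

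For the cokernel, the approach is to compare with the sequence defining $\partial$. Since $\partial$ is injective, the image of $d_p\Phi_j$ corresponds to $\im\delta\cap\im\partial$. By exactness, $\im\delta=\ker\bigl(\Ext^1(\mathcal I_Z(d),\mathcal I_Z(d))\to\Ext^1(\mathcal O^{\oplus3},\mathcal I_Z(d))=H^1(\mathcal I_Z(d))^{\oplus3}\bigr)$. Hence $\coker d_p\Phi_j=\im\partial/(\im\partial\cap\im\delta)$ injects into $\Ext^1/\im\delta$, which injects into $H^1(\mathcal I_Z(d))^{\oplus3}$. If $H^1(\mathcal I_Z(d))=0$, the cokernel vanishes and $d_p\Phi_j$ is surjective.

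The main obstacle is the first step: checking carefully that the Quot identification of Theorem~\ref{thm:open-Z-j-quot-scheme} and the standard identification $T\Hilbert\simeq\Hom(\mathcal I_Z,\mathcal O_Z)$ produce the same extension class, including signs. I would verify this by an explicit Čech/local computation with lifted sections $f_i+\epsilon g_i$.
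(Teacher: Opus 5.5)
Your proposal is correct and follows essentially the same route as the paper. Both arguments identify $\delta(\phi)$ with the pushout class of $\mathcal I_{Z_\epsilon}(d)$ and get injectivity of $\partial$ from $\Hom(\mathcal I_Z,\mathcal O_{\PP^2})\simeq k$. Both then read off $\ker d_p\Phi_j=\ker\delta$ and $\coker d_p\Phi_j\hookrightarrow\coker\delta\simeq\im\bigl(\Ext^1(\mathcal I_Z(d),\mathcal I_Z(d))\to H^1(\mathcal I_Z(d))^{\oplus3}\bigr)$ from the long exact sequence. The differences are only cosmetic: you phrase the cokernel step through $\im\partial/\bigl(\im\partial\cap\tau(\im\delta)\bigr)$ rather than the paper's induced map $\overline{\partial}$. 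The paper, like you, settles the compatibility of the two extension classes by appealing to the standard Quot/Hilbert deformation theory (Sernesi) rather than an explicit sign-level local computation.
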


\begin{proof}
The tangent space at $[Z] = [V(J)] \in \Hilbert^j(\mathbb{P}^2)$ is isomorphic to $\Hom(\mathcal{I}_Z, \mathcal{O}_Z)$. Applying $\Hom(\mathcal{I}_Z, -)$ to
$$
0 \lar \mathcal{I}_Z \xlongrightarrow{i} \mathcal{O}_{\mathbb{P}^2} \lar \mathcal{O}_Z \lar 0,
$$
we obtain a long exact sequence
\begin{align}\label{eq:long-exact-seq-diff}
0 &\lar \Hom(\mathcal{I}_Z, \mathcal{I}_Z) \lar \Hom(\mathcal{I}_Z, \mathcal{O}_{\mathbb{P}^2}) \lar\\ &\lar \Hom(\mathcal{I}_Z, \mathcal{O}_{Z}) \xlongrightarrow{\partial} \Ext^1(\mathcal{I}_Z, \mathcal{I}_Z) \lar \Ext^1(\mathcal{I}_Z, \mathcal{O}_{\mathbb{P}^2}) \lar \ldots
\end{align}
Since $\mathcal I_Z$ is simple, \(\Hom(\mathcal I_Z,\mathcal I_Z)\simeq k\). We also claim that
\[
\Hom(\mathcal I_Z,\mathcal O_{\PP^2})\simeq k.
\]
Indeed, let
\[
\varphi:\mathcal I_Z\longrightarrow\mathcal O_{\PP^2}.
\]
On \(U=\PP^2\setminus Z\) the ideal sheaf $\mathcal I_Z$ is trivial, so $\restr {\varphi} {U}$ is
multiplication by a function in $H^0(U,\mathcal O_U)$. Since
$\PP^2$ is normal and $\PP^2\setminus U$ has codimension two, by Hartogs,
\[
H^0(\PP^2,\mathcal O_{\PP^2})
\xlongrightarrow{\sim}
H^0(U,\mathcal O_U),
\]
and hence \(\restr {\varphi} {U}=\lambda \cdot i\) for some $\lambda\in k$. Thus
$\varphi-\lambda \cdot i$ vanishes on the dense open set $U$. Its image is
therefore a torsion subsheaf of the torsion-free sheaf
$\mathcal O_{\PP^2}$, so it vanishes identically. Hence \(\varphi=\lambda \cdot i\), as claimed. So the first map in \eqref{eq:long-exact-seq-diff} is an isomorphism, and thus $\partial$ is injective. 

Now, let us start with a tangent vector $\alpha \in T_z Z_j$, representing a first-order deformation $J_\epsilon$ of the ideal $J$, and the associated $Z_\epsilon = V(J_\epsilon)$ deformation of the scheme $Z = V(J)$. By Theorem~\ref{thm:open-Z-j-quot-scheme}, a first-order deformation $J_\epsilon = (f_i + \epsilon g_i)$, corresponds to the tangent vector $\alpha = (g_1, g_2, g_3) \in \Hom(\T_J, \mathcal{I}_Z(d))$. By the deformation theory of the Quot functor, the extension class of $Z_\epsilon$, given by
$$
\alpha : 0 \lar \mathcal{I}_Z(d) \xlongrightarrow{\epsilon} \mathcal{I}_{Z_\epsilon}(d) \lar \mathcal{I}_Z(d) \lar 0
$$
in $\Ext^1(\mathcal{I}_Z(d), \mathcal{I}_Z(d))$ is the pushout of the sequence
$$
0 \lar \T_J \lar \mathcal{O}_{\mathbb{P}^2}^{\oplus 3} \lar \mathcal{I}_Z(d) \lar 0
$$
along $\alpha$, say 
\begin{equation}\label{eq:diagram-pushout-deformation}
\begin{tikzcd}
0 \arrow[r]
& \mathcal T_J
  \arrow[r,"i"]
  \arrow[d,"\alpha"]
& \mathcal O_{\PP^2}^{\oplus3}
  \arrow[r,"p"]
  \arrow[d]
& \mathcal I_Z(d)
  \arrow[r]
  \arrow[d,equal]
& 0 \\
0 \arrow[r]
& \mathcal I_Z(d)
  \arrow[r]
& E
  \arrow[r]
& \mathcal I_Z(d)
  \arrow[r]
& 0
\end{tikzcd}
\end{equation}
which coincides (see, for example, \cite[Proposition 4.4.4]{sernesi2006deformations}) with the image of 
$$
\alpha \in \Hom(\T_J, \mathcal{I}_Z(d))
$$ 
under the morphism
$$
\delta : \Hom(\T_J, \mathcal{I}_Z(d)) \lar \Ext^1(\mathcal{I}_Z(d), \mathcal{I}_Z(d)).
$$

The bottom row at \eqref{eq:diagram-pushout-deformation} represents the embedded first-order deformation $Z_\epsilon$ of
$Z\subset\PP^2$, and its associated extension class lies in the image of
the injective connecting morphism
\[
\partial:
\Hom(\mathcal I_Z,\mathcal O_Z)
\longrightarrow
\Ext^1(\mathcal I_Z,\mathcal I_Z).
\]
Moreover, under the natural isomorphism (see, for example, \cite[Chapter III, Proposition 6.7]{hartshorne2013algebraic})
\[
\tau:
\Ext^1(\mathcal I_Z(d),\mathcal I_Z(d))
\xlongrightarrow{\sim}
\Ext^1(\mathcal I_Z,\mathcal I_Z),
\]
the pushout description above gives
\begin{equation}\label{eq:diff-Hilb-j-diagram}
\partial\bigl(d_p\Phi_j(\alpha)\bigr)
=
\tau\bigl(\delta(\alpha)\bigr).    
\end{equation}
Thus the differential $d_p\Phi_j$ is characterized by the
commutative diagram in the statement.

Coming back to the sequence 
$$
0 \lar \T_J \lar \mathcal{O}_{\mathbb{P}^2}^{\oplus 3} \lar \mathcal{I}_Z(d) \lar 0
$$
and applying the functor $\Hom(-, \mathcal{I}_Z(d))$, we obtain a long exact sequence of the form
\begin{equation}\label{eq:long-exact-seq-Hilbert-diff}
0 \lar k \lar H^0(\mathcal{I}_Z(d))^{\oplus 3} \xlongrightarrow{\rho} T_p Z_j \xlongrightarrow{\delta} \Ext^1(\mathcal{I}_Z(d), \mathcal{I}_Z(d)) \xlongrightarrow{\psi} H^1(\mathcal{I}_Z(d))^{\oplus 3} \lar \ldots    
\end{equation}
From the equation \eqref{eq:diff-Hilb-j-diagram}, since $\partial$ is injective and $\tau$ is an isomorphism, we conclude
\[
d_p\Phi_j(\alpha)=0
\iff
\delta(\alpha)=0.
\]
Therefore \(\ker(d_p\Phi_j)=\ker(\delta)\).
By exactness of
\eqref{eq:long-exact-seq-Hilbert-diff}, \(\ker(\delta)=\im(\rho)\), and hence
\[
\ker(d_p\Phi_j)
=
\im(\rho)
\simeq
\frac{H^0(\mathcal I_Z(d))^{\oplus3}}
{k\cdot(f_1,f_2,f_3)}.
\]
The commutative diagram \eqref{eq:diff-Hilb-j-diagram} induces a well-defined morphism
\[
\overline{\partial}:
\coker(d_p\Phi_j)
\longrightarrow
\coker(\delta),
\qquad
[\xi]\longmapsto
[\tau^{-1}(\partial\xi)].
\]
Moreover, it is injective. Indeed, if
$\overline{\partial}([\xi])=0$, then there exists
$\alpha\in T_pZ_j$ such that
\[
\partial\xi=\tau(\delta(\alpha))
=\partial(d_p\Phi_j(\alpha)).
\]
Since $\partial$ is injective, \(
\xi=d_p\Phi_j(\alpha)\), and hence $[\xi]=0$ in $\coker(d_p\Phi_j)$. Therefore $\overline{\partial}$ is injective.

By exactness of
\eqref{eq:long-exact-seq-Hilbert-diff},
\[
\coker(\delta)\simeq\im(\psi)
\subseteq H^1(\mathcal I_Z(d))^{\oplus3},
\]
and consequently we obtain the injection of the claim
\[
\coker(d_p\Phi_j)
\hookrightarrow
H^1(\mathcal I_Z(d))^{\oplus3}.
\]

We have therefore obtained
\[
\ker(d_p\Phi_j)
\simeq
\frac{H^0(\mathcal I_Z(d))^{\oplus3}}
{k\cdot(f_1,f_2,f_3)} \text{ and }\coker(d_p\Phi_j)
\hookrightarrow
H^1(\mathcal I_Z(d))^{\oplus3}.
\]
In particular,
\[
\dim\ker(d_p\Phi_j)
=
3h^0(\mathcal I_Z(d))-1,
\]
and if \(h^1(\mathcal I_Z(d))=0\), then $d_p\Phi_j$ is surjective.
\end{proof}

The preceding proposition shows that the failure of the differential
$d_p\Phi_j$ to be surjective is controlled by the cohomology
\[
H^1(\mathcal I_Z(d)).
\]
This also gives the following useful smoothness criterion.

\begin{Lemma}\label{lem:obstruction-cohomology}
Let $d \geq 1$, $j \geq 0$ and $[J] \in Z_j$ with corresponding closed subscheme $Z = V(J) \subset \mathbb{P}^2$. Then:
\begin{itemize}
    \item[(a)] If $H^1(\mathcal{I}_Z(d)) = 0$, then $Z_j$ is smooth at $[J]$ and $\Phi_j$ is smooth morphism at $[J]$.
    \item[(b)] There is an isomorphism
    $$
    H^1(\mathcal{I}_Z(d)) \simeq H^0(\T_J(d-3))^*.
    $$
    In particular, whenever $e = \indeg(\syz(J)) \geq d-2$, then $h^1(\mathcal{I}_Z(d)) = 0$.
\end{itemize}
\end{Lemma}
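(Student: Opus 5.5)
For~(a), I would begin by bounding the obstruction group from Theorem~\ref{thm:open-Z-j-quot-scheme}. Apply $\Hom(\T_J,-)$ to
\[
0\lar\mathcal I_Z(d)\lar\mathcal O_{\PP^2}(d)\lar\mathcal O_Z\lar0 .
\]
Since $\T_J$ is locally free and $Z$ is zero-dimensional, $\Ext^1(\T_J,\mathcal O_Z)\simeq H^1(\T_J^\vee\otimes\mathcal O_Z)=0$. Hence
\[
\Ext^1(\T_J,\mathcal I_Z(d))\lar H^1(\T_J^\vee(d))
\]
is injective. It therefore suffices to control $H^1(\T_J^\vee(d))$.

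Next, dualize the defining sequence $0\to\T_J\to\mathcal O^{\oplus3}\to\mathcal I_Z(d)\to0$. Using $\mathcal{H}om(\mathcal I_Z,\mathcal O)=\mathcal O$ and $\mathcal{E}xt^1(\mathcal I_Z,\mathcal O)\simeq\mathcal{E}xt^2(\mathcal O_Z,\mathcal O)$ (a skyscraper sheaf of length $j$), we get
\[
0\lar\mathcal O(-d)\lar\mathcal O^{\oplus3}\lar\T_J^\vee\lar\mathcal{E}xt^2(\mathcal O_Z,\mathcal O)(-d)\lar0 .
\]
A cleaner route uses the rank-two identity $\T_J^\vee\simeq\T_J(d)$, which holds because $\det\T_J=\mathcal O(-d)$. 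This gives
\[
H^1(\T_J^\vee(d))\simeq H^1(\T_J(2d)).
\]
From the defining sequence twisted by $2d$, and since $H^1(\mathcal O(2d))=0$, the group $H^1(\T_J(2d))$ is the cokernel of
\[
H^0(\mathcal O(2d))^{3}\lar H^0(\mathcal I_Z(3d)).
\]
This is awkward. I therefore plan instead to obtain smoothness of $\Phi_j$ directly. By Proposition~\ref{prop:differential-Hilb-j}, $d_p\Phi_j$ is surjective when $H^1(\mathcal I_Z(d))=0$. Moreover, $\Hilbert^j(\PP^2)$ is smooth by Fogarty. Near $[Z]$ in the Hilbert scheme, the vanishing $H^1(\mathcal I_{Z'}(d))=0$ persists by semicontinuity applied to the universal family. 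It follows that $q_*$ of the universal $\mathcal I(d)$ is locally free of rank $\binom{d+2}{2}-j$ near $[Z]$, by cohomology and base change.

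Theorem~\ref{thm:hilb-map-j} then identifies $\Phi_j^{-1}$ of that neighbourhood with an open subset of the projective bundle $\PP\bigl((q_*\mathcal I(d))^{\oplus3}\bigr)$. The identification is functorial: a family of triples with flat base scheme $\mathcal Z$ is the same as three sections of the pushed-forward ideal sheaf, twisted by a line bundle, satisfying the open conditions. So locally $\Phi_j$ is the restriction of a projective bundle projection, hence smooth. Since the base is smooth, $Z_j$ is smooth at $[J]$. I expect the main obstacle to be this functorial identification, namely checking that $T$-points of the open set in the projective bundle are exactly $T$-points of $Z_j$. I would use Lemma~\ref{lem:functors-of-triples} and the flatness Lemma~\ref{lem:flatness-finite-cokernel}. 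Alternatively, one can avoid it by a tangent-space count: compare $\dim T_pZ_j=\dim\ker+\dim\operatorname{im}=3h^0(\mathcal I_Z(d))-1+2j$ with the dimension of the projective bundle.

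For~(b), I would apply $H^\bullet$ to the defining sequence. Since $H^1(\mathcal O)=H^2(\mathcal O)=0$, this gives
\[
H^1(\mathcal I_Z(d))\simeq H^2(\T_J).
\]
Serre duality gives $H^2(\T_J)\simeq H^0(\T_J^\vee(-3))^*$. Then $\T_J^\vee\simeq\T_J(d)$ yields
\[
H^1(\mathcal I_Z(d))\simeq H^0(\T_J(d-3))^* .
\]
Finally, $h^0(\T_J(l))=\dim\syz(J)_l=0$ for $l<e$. So if $e\ge d-2$, then $d-3<e$ and the group vanishes.
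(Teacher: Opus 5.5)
Your argument for (b) is the same as the paper's. For (a) you take a genuinely different route.

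The paper stays inside the Quot-scheme deformation theory. Applying $\Hom(-,\mathcal I_Z(d))$ to $0\lar\T_J\lar\mathcal O_{\PP^2}^{\oplus3}\lar\mathcal I_Z(d)\lar0$ gives
\[
H^1(\mathcal I_Z(d))^{\oplus3}\lar\Ext^1(\T_J,\mathcal I_Z(d))\lar\Ext^2(\mathcal I_Z(d),\mathcal I_Z(d)).
\]
The right-hand term vanishes by Serre duality, being dual to $\Hom(\mathcal I_Z,\mathcal I_Z(-3))\subseteq H^0(\mathcal O_{\PP^2}(-3))=0$. Hence the obstruction space vanishes and $Z_j$ is smooth at $[J]$. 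Smoothness of $\Phi_j$ then follows from surjectivity of $d_{[J]}\Phi_j$ (Proposition~\ref{prop:differential-Hilb-j}) together with Fogarty's theorem.

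You instead work over the open set $V\subseteq\Hilbert^j(\PP^2)$ where $h^1(\mathcal I_{Z'}(d))=0$, and identify $\Phi_j^{-1}(V)$ with an open subscheme of $\PP\bigl((q_*\mathcal I_{\mathcal Z}(d))^{\oplus3}\bigr)$. The functorial check you flag does go through. First, $q_*$ commutes with base change because $H^1$ and $H^2$ vanish on the fibers. Second, fiberwise surjectivity of the three sections onto the twisted ideal sheaf forces the base scheme of the triple to be exactly $\mathcal Z_T$. Conversely, the image of the evaluation map of a $T$-point of $Z_j$ is the twisted $\mathcal I_{\mathcal Z_T}$.

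Your route is more work than the paper's few-line Ext computation, but it yields more. It gives an explicit local projective-bundle structure for $\Phi_j$ over all of $V$. From this you get directly the smoothness, irreducibility and dimension $\dim\PP(W_d)-j$ of $\Phi_j^{-1}(V)$, with no appeal to Quot obstruction theory.

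Two small remarks. Your ``alternative'' tangent count does not really bypass the functorial step. Bounding $\dim_{[J]}Z_j$ from below still requires at least the morphism from the open part of the projective bundle onto $\Phi_j^{-1}(V)$. Also, in your abandoned opening, $\Ext^1(\T_J,\mathcal O_Z)=0$ makes $\Ext^1(\T_J,\mathcal I_Z(d))\to H^1(\T_J^\vee(d))$ surjective, not injective. Injectivity would need $H^0(\T_J^\vee(d))\to H^0(\T_J^\vee\otimes\mathcal O_Z)$ to be onto, so that paragraph should be deleted.
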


\begin{proof}
By Theorem~\ref{thm:open-Z-j-quot-scheme},
\[
\Ext^1(\T_J,\mathcal I_Z(d))
\]
is an obstruction space for the Quot-scheme description of $Z_j$. Thus it suffices to show that this group vanishes under the
hypothesis. Applying the functor $\Hom(-, \mathcal{I}_Z(d))$ to the short exact sequence
$$
0 \lar \T_J \lar \mathcal{O}_{\mathbb{P}^2}^{\oplus 3} \lar \mathcal{I}_Z(d) \lar 0
$$
one gets a long exact sequence with the following piece:
\begin{equation}\label{eq:vanishing-obstruction-cohomology}
\ldots \lar H^1( \mathcal{I}_Z(d))^{\oplus 3} \lar \Ext^1(\T_J, \mathcal{I}_Z(d)) \lar \Ext^2(\mathcal{I}_Z(d), \mathcal{I}_Z(d)) \lar \ldots
\end{equation}
and we claim both endpoints above are zero. The left-hand one vanishes by hypothesis. For the right-hand group, since $\mathbb{P}^2$ is smooth, every coherent sheaf is perfect, and the derived form of Serre duality implies
$$
\Ext^2(\mathcal{I}_Z, \mathcal{I}_Z) \simeq \Hom(\mathcal{I}_Z, \mathcal{I}_Z(-3))^*.
$$

Any morphism $f: \mathcal{I}_Z \lar \mathcal{I}_Z(-3)$ induces, after taking reflexive hulls, a commutative diagram
\begin{center}
\begin{tikzcd}
\mathcal{I}_Z \arrow[r, "f"] \arrow[d]                & \mathcal{I}_Z(-3) \arrow[d]    \\
\mathcal{O}_{\mathbb{P}^2} \arrow[r, "f^{\vee \vee}"] & \mathcal{O}_{\mathbb{P}^2}(-3)
\end{tikzcd}
\end{center}
where the vertical arrows are injective. Note that $f^{\vee \vee} = 0$, and thus $f = 0$, showing the desired vanishing. 

Hence \( \Ext^1(\T_J,\mathcal I_Z(d))=0\). Since this group is an obstruction space for the Quot scheme
description of $Z_j$ from
Theorem~\ref{thm:open-Z-j-quot-scheme}, it follows that $Z_j$ is
smooth at $[J]$.

Moreover, by Proposition~\ref{prop:differential-Hilb-j}, the hypothesis \(h^1(\mathcal I_Z(d))=0\) implies that
\[
d_{[J]}\Phi_j:
T_{[J]}Z_j
\longrightarrow
T_{[Z]}\Hilbert^j(\PP^2)
\]
is surjective. By Fogarty's theorem,
$\Hilbert^j(\PP^2)$ is smooth, and we have just proved that $Z_j$ is
smooth at $[J]$. Therefore the standard tangent-space criterion for
smoothness implies that $\Phi_j$ is smooth at $[J]$. This proves~(a).

For~(b), cohomology applied to the short exact sequence
$$
0 \lar \T_J \lar \mathcal{O}_{\mathbb{P}^2}^{\oplus 3} \lar \mathcal{I}_Z(d) \lar 0
$$
yields an isomorphism
$$
H^1(\mathcal{I}_Z(d)) \simeq H^2(\T_J),
$$
and Serre duality on $\mathbb{P}^2$ gives
$$
H^2(\T_J) \simeq H^0(\T_J^\vee(-3))^*.
$$
Since $\T_J$ is a rank two vector bundle with $\det(\T_J) \simeq \mathcal{O}_{\mathbb{P}^2}(-d)$, there is an isomorphism $\T_J^\vee \simeq \T_J(d)$, and thus we obtain the desired isomorphism
$$
H^1(\mathcal{I}_Z(d)) \simeq H^2(\T_J) \simeq H^0(\T_J^\vee(-3))^* \simeq H^0(\T_J(d-3))^*.
$$
\end{proof}

A standard residual argument gives the following useful consequence.

\begin{Proposition}\label{prop:Z-smooth-irr-low-j}
Let $d\geq2$ and $0\leq j\leq d$. If $Z_j$ is nonempty, then
$Z_j\subset\PP(W_d)$ is smooth and irreducible of codimension $j$,
and
\[
\Phi_j:Z_j\longrightarrow\Hilbert^j(\PP^2)
\]
is a smooth morphism with dense open image.
\end{Proposition}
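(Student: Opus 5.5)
The plan is to reduce everything to Lemma~\ref{lem:obstruction-cohomology}(a) and Lemma~\ref{lem:irreducibility-from-Hilbert-map}. The key input is the classical fact that a zero-dimensional subscheme $Z\subset\PP^2$ of length $j$ imposes independent conditions on forms of degree $m\geq j-1$, that is, $H^1(\mathcal I_Z(m))=0$ for $m\geq j-1$. Since $j\leq d$, we get $d\geq j-1$, and hence $H^1(\mathcal I_Z(d))=0$ for every $[J]\in Z_j$.

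To prove the vanishing I will use the standard residual induction on $j$. For $j=0$ the claim is $H^1(\mathcal O_{\PP^2}(m))=0$. For $j\geq1$, pick a line $L$ meeting $Z$ in a subscheme of positive length. Let $Z'=(Z:L)$ be the residual scheme, of length $j'=j-\deg(Z\cap L)<j$. The residual exact sequence is
\[
0\lar\mathcal I_{Z'}(m-1)\xlongrightarrow{\cdot L}\mathcal I_Z(m)\lar\mathcal I_{Z\cap L,L}(m)\lar0 .
\]
On $L\simeq\PP^1$ the right-hand term is $\mathcal O_{\PP^1}(m-\deg(Z\cap L))$, whose $H^1$ vanishes because $m-\deg(Z\cap L)\geq m-j\geq-1$. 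By induction, $H^1(\mathcal I_{Z'}(m-1))=0$ because $m-1\geq j-2\geq j'-1$. The long exact sequence then gives $H^1(\mathcal I_Z(m))=0$.

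The one delicate point is the local algebra of the residual sequence at non-reduced points. It holds because $L$ is a Cartier divisor, with $\mathcal I_{Z'}=(\mathcal I_Z:\mathcal I_L)$ and the quotient identified with the ideal of $Z\cap L$ in $\mathcal O_L$. I expect this to be the step needing the most care, though it is standard.

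With the vanishing in hand, Lemma~\ref{lem:obstruction-cohomology}(a) shows that $Z_j$ is smooth at every point and that $\Phi_j$ is smooth at every point, so $\Phi_j$ is a smooth morphism. Since $Z_j\neq\emptyset$, Lemma~\ref{lem:irreducibility-from-Hilbert-map} then gives irreducibility of $Z_j$ and that $\Phi_j(Z_j)$ is dense open in $\Hilbert^j(\PP^2)$.

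For the codimension, I will compute the dimension through $\Phi_j$. Its relative dimension at $[J]$ equals $\dim\ker(d_p\Phi_j)=3h^0(\mathcal I_Z(d))-1$ by Proposition~\ref{prop:differential-Hilb-j}. Using $h^1(\mathcal I_Z(d))=0$, this is $3\binom{d+2}{2}-3j-1$. Adding $\dim\Hilbert^j(\PP^2)=2j$ (Fogarty) gives
\[
\dim Z_j=3\binom{d+2}{2}-1-j=\dim\PP(W_d)-j .
\]
Equivalently, one checks that $\Ext^1(\T_J,\mathcal I_Z(d))=0$, as in the proof of Lemma~\ref{lem:obstruction-cohomology}, and applies Lemma~\ref{lem:dimension-Z-j-smooth-points}. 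Either route gives codimension $j$.
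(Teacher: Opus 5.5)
Your proposal is correct and follows the paper's proof essentially step for step. The paper uses the same residual induction with respect to a line to get $H^1(\mathcal I_Z(m))=0$ for $m\geq j-1$, and then applies Lemma~\ref{lem:obstruction-cohomology}(a) and Lemma~\ref{lem:irreducibility-from-Hilbert-map} to obtain smoothness of $Z_j$ and $\Phi_j$, irreducibility, and dense open image. The only difference is cosmetic: your primary dimension count goes through the fibers of $\Phi_j$ (relative dimension $3h^0(\mathcal I_Z(d))-1$ plus $2j$), whereas the paper invokes Lemma~\ref{lem:dimension-Z-j-smooth-points} directly, which is the alternative you already mention.
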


\begin{proof}
We first recall that, for every zero-dimensional subscheme
$Z\subset\PP^2$ of length $j$,
\begin{equation}\label{eq:postulation-small-length}
H^1(\mathcal I_Z(m))=0
\qquad\text{for every }m\geq j-1.
\end{equation}
We prove this by induction on $j$. The case $j=0$ is immediate. For $j>0$, choose a line $L$ meeting $Z$, set
\[
k\doteq\length(Z\cap L)\geq1,
\]
and let
\(
Z'\doteq\operatorname{Res}_L(Z)\). Then \(\length(Z')=j-k\) and the residual sequence gives
\[
0
\longrightarrow
\mathcal I_{Z'}(m-1)
\longrightarrow
\mathcal I_Z(m)
\longrightarrow
\mathcal I_{Z\cap L,L}(m)
\longrightarrow0.
\]
Since $L\simeq\PP^1$ and $Z\cap L$ has length $k$,
\[
\mathcal I_{Z\cap L,L}(m)
\simeq
\mathcal O_L(m-k).
\]
If $m\geq j-1$, then
\[
m-1\geq (j-k)-1
\qquad\text{and}\qquad
m-k\geq-1.
\]
Thus, by induction,
\[
H^1(\mathcal I_{Z'}(m-1))=0,
\]
while
\[
H^1(\mathcal O_L(m-k))=0.
\]
The cohomology sequence proves
\eqref{eq:postulation-small-length}.

Now let $[J]\in Z_j$. Since $j\leq d$, we have \(H^1(\mathcal I_Z(d))=0\). Lemma~\ref{lem:obstruction-cohomology} therefore shows that $Z_j$
and $\Phi_j$ are smooth. Since $Z_j$ is nonempty,
Lemma~\ref{lem:irreducibility-from-Hilbert-map} shows that $Z_j$ is
irreducible and that $\Phi_j(Z_j)$ is dense open in
$\Hilbert^j(\PP^2)$.

Finally, Lemma~\ref{lem:dimension-Z-j-smooth-points} gives
\[
\dim Z_j
=
3\binom{d+2}{2}-j-1
=
\dim\PP(W_d)-j.
\]
Hence $Z_j$ has codimension $j$ in $\PP(W_d)$.
\end{proof}

We finish the section by recording the complete picture for
quadratic triples.

\begin{Proposition}\label{prop:d-2-Phi-j}
Assume $d=2$. Then:

\begin{itemize}
\item[(a)] $Z_1,Z_2,Z_3$ are smooth and irreducible of dimensions
$16,15,14$, respectively;

\item[(b)] the morphisms $\Phi_j$ are smooth for $j=1,2,3$;

\item[(c)] $\Phi_1$ and $\Phi_2$ are surjective, while
\[
\Phi_3(Z_3)
=
\left\{
[Z]\in\Hilbert^3(\PP^2):
Z\text{ is not contained in a line}
\right\}.
\]
\end{itemize}
\end{Proposition}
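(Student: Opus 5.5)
The plan is to treat $j=1,2$ with Proposition~\ref{prop:Z-smooth-irr-low-j}, and to treat $j=3$ (where $j=d+1$ falls outside that range) by hand using Lemma~\ref{lem:obstruction-cohomology} and Lemma~\ref{lem:irreducibility-from-Hilbert-map}. For $d=2$ we have $\dim\PP(W_2)=17$. For $j\le 2$, Proposition~\ref{prop:Z-smooth-irr-low-j} gives smoothness, irreducibility, codimension $j$ and smoothness of $\Phi_j$ once $Z_1,Z_2$ are nonempty. Nonemptiness follows from explicit triples, e.g. $(x^2,xy,y^2)$-type perturbations: $(x^2,y^2,xz)$ has base locus of length $2$ at $[0:0:1]$, and $(x^2, xy, yz)$ works for length $1$. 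One checks the base-scheme length directly in each case.

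\textbf{Surjectivity of $\Phi_1$ and $\Phi_2$.} Since $\mathrm{PGL}_3$ acts on $Z_j$ equivariantly with respect to $\Phi_j$, it suffices to realize each isomorphism type of length $\le 2$ subscheme. For $j=1$ there is a single point; for $j=2$ there are two reduced points or one point with a tangent direction. For any such $Z$, $H^1(\mathcal I_Z(2))=0$ by \eqref{eq:postulation-small-length}, so $h^0(\mathcal I_Z(2))=6-j\ge 4$. A general triple in $H^0(\mathcal I_Z(2))$ is then linearly independent and generates $\mathcal I_Z(2)$: the conics through $Z$ cut out $Z$ scheme-theoretically, since $\mathcal I_Z(2)$ is globally generated for $j\le 2$ (products of two linear forms through $Z$ suffice). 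A general $3$-dimensional subspace of a globally generated $\ge4$-dimensional space of sections still generates, because we are on a surface and the rank-one target needs only $2+1=3$ general sections. Hence every fiber $\Phi_j^{-1}([Z])$ is nonempty.

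\textbf{The case $j=3$.} For $[J]\in Z_3$ we have $e\ge 1$ and $d-3=-1$, so $H^1(\mathcal I_Z(2))\simeq H^0(\T_J(-1))^*=0$ by Lemma~\ref{lem:obstruction-cohomology}(b), since $\T_J\subset\mathcal O^{\oplus3}$. Part (a) of that lemma then gives smoothness of $Z_3$ and of $\Phi_3$. Irreducibility follows from Lemma~\ref{lem:irreducibility-from-Hilbert-map}, and the dimension is $17-3=14$ by Lemma~\ref{lem:dimension-Z-j-smooth-points}, once we exhibit one point, e.g. $(xy,xz,yz)$ with base locus the three coordinate points.

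For the image, there are two directions. If $Z$ of length $3$ lies on a line $L$, then every conic through $Z$ contains $L$, by B\'ezout since $\length(Z\cap L)=3>2$. The three forms would then share the factor $L$, contradicting $\gcd=1$. Conversely, if $Z$ is not collinear, then $H^1(\mathcal I_Z(1))=0$ (three non-collinear conditions on lines are independent), so $h^0(\mathcal I_Z(2))=3$. In that case $Z$ is a local complete intersection of the form given by the $2\times 2$ minors of a $2\times3$ linear matrix (Hilbert--Burch for $\mathcal I_Z$ with resolution $0\to\mathcal O(-3)^2\to\mathcal O(-2)^3\to\mathcal I_Z\to0$). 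Thus the three conics generate $\mathcal I_Z$, and the unique point of the fiber is the basis triple, which is linearly independent with no common factor.

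The main obstacle is this last step: justifying that every non-collinear length-$3$ scheme, including curvilinear nonreduced ones and the fat point $\mathfrak m_p^2$, has $\mathcal I_Z(2)$ generated by its three conics. I would argue via the Hilbert--Burch resolution, after checking $h^1(\mathcal I_Z(1))=0$ and applying Castelnuovo--Mumford regularity: $\mathcal I_Z$ is $2$-regular, hence $\mathcal I_Z(2)$ is globally generated.
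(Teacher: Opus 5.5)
Your proof is correct in substance, and it differs from the paper's in two places. For $j=3$ you get $H^1(\mathcal I_Z(2))=0$ directly from Lemma~\ref{lem:obstruction-cohomology}(b), because $H^0(\mathcal T_J(-1))\subset H^0(\mathcal O_{\PP^2}(-1))^{\oplus 3}=0$. The paper instead first identifies $\Phi_3(Z_3)$ with the non-collinear locus $B_3$, and then uses the Hilbert function $1,3,3,\dots$ of such $Z$. Your version is cleaner and shows more: for $d=2$ every point of every $Z_j$ is unobstructed, independently of the image computation.

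For surjectivity of $\Phi_1,\Phi_2$, you use global generation of $\mathcal I_Z(2)$ plus a general-sections count. The paper writes explicit normal-form triples: $(xz+y^2,\,yz+x^2,\,xy)$ for a point, and $(x^2+q,\,xy,\,xz)$ when $I_Z=(x,q(y,z))$. The explicit route avoids any general-position argument; yours avoids normal forms. The description of $\Phi_3(Z_3)$ and the appeal to Lemma~\ref{lem:irreducibility-from-Hilbert-map} are as in the paper.

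Several individual claims need correcting, although none is load-bearing.

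First, $(x^2,xy,yz)$ is not a point of $Z_1$. Its base scheme is the reduced point $[0:1:0]$ together with the length-two scheme given locally by $(x^2,y)$ at $[0:0:1]$. So it has length $3$ and is a point of $Z_3$. Nonemptiness of $Z_1$ already follows from your surjectivity paragraph, so drop this example or use the paper's triple.

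Second, the count ``rank plus dimension, i.e.\ $3$ general sections suffice'' is the statement for vector bundles. The sheaf $\mathcal I_Z(2)$ has $2$-dimensional fibers at the points of $Z$. You should add that at these finitely many points a general $3$-dimensional subspace of $H^0(\mathcal I_Z(2))$ still surjects onto the fiber, and then conclude by Nakayama.

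Third, the fat point $V(\mathfrak m_p^2)$ is not a local complete intersection, since it needs three local generators. What is true, and what you actually use, is the determinantal Hilbert--Burch presentation. In fact the $2$-regularity of $\mathcal I_Z$, which follows from $H^1(\mathcal I_Z(1))=0$, already gives that $\mathcal I_Z(2)$ is generated by its three conics.

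Finally, the fiber $\Phi_3^{-1}([Z])$ is not a single point. It is the open set of bases of the $3$-dimensional space $H^0(\mathcal I_Z(2))$ modulo scalars, of dimension $8=14-6$. Only its nonemptiness matters for the image.
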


\begin{proof}
For $j=1,2$, the smoothness, irreducibility, and dimension statements
follow directly from Proposition~\ref{prop:Z-smooth-irr-low-j}.

To see that $\Phi_1$ is surjective, after a projective change of
coordinates any point may be written as $p=[0:0:1]$, and the triple
\[
xz+y^2,\qquad yz+x^2,\qquad xy
\]
has base scheme $\{p\}$.

For $j=2$, every length-two subscheme $Z\subset\PP^2$ is contained
in a line. After a change of coordinates,
\[
I_Z=(x,q(y,z)),
\]
where $q$ is quadratic. Then
\[
x^2+q,\qquad xy,\qquad xz
\]
are linearly independent quadrics generating $\mathcal I_Z$ as an
ideal sheaf. Hence $\Phi_2$ is surjective.

Now let $j=3$ and put
\[
B_3
\doteq
\left\{
[Z]\in\Hilbert^3(\PP^2):
H^0(\mathcal I_Z(1))=0
\right\}.
\]
This is the open locus of length-three schemes not contained in a
line. If $[J]\in Z_3$ and $Z=V(J)$ were contained in a line $L$,
then every quadric vanishing on $Z$ would vanish identically on $L$.
Thus the three generators of $J$ would have the equation of $L$ as a
common factor, contradicting $[J]\in U_2$. Hence \(\Phi_3(Z_3)\subseteq B_3\).

Conversely, if $[Z]\in B_3$, then the Hilbert function of $Z$ is
\[
1,3,3,\ldots,
\]
so \(h^0(\mathcal I_Z(2))=3\) and $\mathcal I_Z$ is generated by three quadrics. Choosing a basis
gives a point of $Z_3$, and therefore \(
\Phi_3(Z_3)=B_3\).

For $Z\in B_3$ one also has \(H^1(\mathcal I_Z(2))=0\). Thus Lemma~\ref{lem:obstruction-cohomology} shows that $Z_3$ and
$\Phi_3$ are smooth. Since
\(
\Phi_3(Z_3)=B_3\neq\emptyset\), Lemma~\ref{lem:irreducibility-from-Hilbert-map} shows that $Z_3$ is
irreducible. Finally,
\[
\dim Z_3=17-3=14
\]
by Lemma~\ref{lem:dimension-Z-j-smooth-points}.
\end{proof}

\section{Mapping triples to associated Bourbaki schemes}\label{sec:Bourbaki-map}

In this section, we focus further on the locally closed strata
\[
I_e^j
=
\left\{
[J]\in Z_j:
\indeg(\syz(J))=e
\right\},
\]
on which the Bourbaki degree is constant.

We start with a universal Bourbaki construction, described in Lemma~\ref{lem:universal-Bourbaki-construction}, which will be used to produce a morphism of schemes (in Theorem~\ref{thm:morphism-Bourbaki-scheme}):
$$
\nu \in \syz(J)_e \longmapsto B = B(\nu, J) \subset \mathbb{P}^2,
$$
from an appropriate parameter space of pairs to the Hilbert scheme $\Hilbert^b(\mathbb{P}^2)$, where $B = B(\nu, J)$ is the Bourbaki scheme associated to $J$ and a choice of non-zero syzygy $\nu$. 

Afterwards, in Proposition~\ref{prop:free-strata-smooth}, we show the free strata $I_e^{d(d-e)+e^2}$ are smooth and irreducible varieties, giving a closed formula for their dimension. 

\begin{Lemma}\label{lem:universal-Bourbaki-construction}
Fix $d\geq1$, $j\geq1$, and $1\leq e\leq d$. Let \(W_e^j\subseteq I_e^j\) be the open locus on which
\(h^0(\T_J(e))\) takes its minimum value, say $r_e$, and endow $W_e^j$ with its reduced induced scheme
structure.

Let \(
q:\PP^2\times W_e^j\longrightarrow W_e^j\) be the projection and let $\T_W$ denote the restriction of the
universal syzygy bundle to $\PP^2\times W_e^j$. Then:

\begin{enumerate}
    \item[(a)] The sheaf \(\mathcal E_e^j \doteq q_*(\T_W(e))\)
    is locally free of rank $r_e$, its formation commutes with base
    change, and for every $[J] \in W_e^j$ there is a natural isomorphism 
    \[
    \mathcal E_e^j\otimes k([J])
    \simeq
    H^0(\PP^2,\T_J(e))
    \simeq
    \syz(J)_e.
    \]

    \item[(b)] Let \(\pi: \PP(\mathcal E_e^j)
    \longrightarrow W_e^j\) be the induced projective bundle by $\mathcal E_e^j$.
    Then the fiber over $[J]$ is naturally isomorphic to \(\pi^{-1}([J])\simeq\PP(\syz(J)_e)\).

    \item[(c)] There exists a finite flat family \(\mathcal B\subseteq\PP^2\times \PP(\mathcal{E}_e^j)\) of degree
    \[
    b=e(e-d)+d^2-j
    \]
    such that, for every geometric point $y=([J],[\nu])\in\PP(\mathcal E_e^j)$,
    the fiber $\mathcal B_y$ is the Bourbaki scheme
    $B_\nu(J)$ associated with the minimal syzygy $\nu$.
\end{enumerate}
\end{Lemma}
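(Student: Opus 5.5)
For (a) I will use Grauert's theorem. The universal syzygy sheaf $\T_S$ on $\PP^2\times Z_j$ is flat over $Z_j$, as shown in the proof of Theorem~\ref{thm:strata}. Its restriction $\T_W(e)$ to $\PP^2\times W_e^j$ is therefore flat over $W_e^j$. On $W_e^j$ the function $[J]\mapsto h^0(\T_J(e))$ is constant, equal to $r_e$. Since $W_e^j$ carries its reduced structure, \cite[Chapter~III, Corollary~12.9]{hartshorne2013algebraic} shows that $q_*\T_W(e)$ is locally free of rank $r_e$ and that the base change map $\mathcal E_e^j\otimes k([J])\to H^0(\T_J(e))$ is an isomorphism. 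The identification $H^0(\T_J(e))\simeq\syz(J)_e$ comes from the defining sequence $0\to\T_J\to\mathcal O^{\oplus3}\to\mathcal I_Z(d)\to0$, using that $H^0_*(\mathcal I_Z)$ contains $J$ and that $\syz(J)$ is reflexive, hence saturated. Compatibility with arbitrary base change (not only with passing to points) is where the reducedness hypothesis is used. Part (b) then follows at once, since the fibers of $\PP(\mathcal E_e^j)$ are the projectivizations of the fibers of $\mathcal E_e^j$.

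For (c), let $Y=\PP(\mathcal E_e^j)$ with projection $\pi$. The tautological inclusion $\mathcal O_Y(-1)\hookrightarrow\pi^*\mathcal E_e^j$, composed with the adjunction $q^*q_*\T_W(e)\to\T_W(e)$ pulled back to $\PP^2\times Y$, gives a universal syzygy
\[
\nu_{\rm univ}:\mathcal O_{\PP^2}(-e)\boxtimes\mathcal O_Y(-1)\longrightarrow \T_Y,
\]
where $\T_Y$ is the pullback of $\T_W$. At a geometric point $y=([J],[\nu])$ this restricts to $\nu:\mathcal O(-e)\to\T_J$. This map is injective because $\nu\neq0$ and $\T_J$ is torsion-free. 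Its cokernel is $\mathcal I_{B_\nu}(e-d)$, a twisted ideal sheaf of a zero-dimensional scheme. Torsion-freeness holds because $e$ is the initial degree, so $\nu$ does not vanish along a curve: if it did, dividing by the equation of that curve would give a syzygy of smaller degree.

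I then apply Lemma~\ref{lem:flatness-finite-cokernel} in two steps. First, $\nu_{\rm univ}$ is fiberwise injective and $\T_Y$ is $Y$-flat, so $\mathcal C=\coker(\nu_{\rm univ})$ is $Y$-flat and its fibers are the sheaves $\mathcal I_{B_\nu}(e-d)$. Second, I mimic the determinant trick of Lemma~\ref{lem:compatibility-determinant}. The wedge map $\T_Y\to\det\T_Y\otimes(\mathcal O(e)\boxtimes\mathcal O_Y(1))$, given by $s\mapsto\nu_{\rm univ}\wedge s$, kills the image of $\nu_{\rm univ}$ and induces $\varphi:\mathcal C\to\mathcal L$ with $\mathcal L$ invertible. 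On each fiber $\varphi$ is injective: it is an isomorphism off the finite zero locus of $\nu$, and $\mathcal C_y$ is torsion-free. The cokernel of $\varphi_y$ is $\mathcal O_{B_\nu}$, up to a twist, and its length is $c_2(\T_J(e))$. This length must equal $b$; I will check it by a Chern class computation, $c_2(\T_J(e))=c_2(\T_J)+e\,c_1(\T_J)+e^2=(d^2-j)-de+e^2$, using $c_1(\T_J)=-d$ and $c_2(\T_J)=d^2-j$ from the sequence above. This agrees with \eqref{eq:Bourbaki-deg-formula}. The second part of Lemma~\ref{lem:flatness-finite-cokernel} then yields the finite flat family $\mathcal B\subset\PP^2\times Y$ of degree $b$, whose fibers are the $B_\nu(J)$.

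I expect the main obstacle to be making the universal wedge map and its fiberwise identification precise. The approach needs $\det\T_Y$ to be a line bundle compatible with base change, which follows from the local freeness in Lemma~\ref{lem:compatibility-determinant}(a). It also needs fiberwise torsion-freeness of the cokernel, which reduces to the minimality of $e$ as explained above. The case $b=0$, the free case, is consistent: there $\mathcal B$ is empty, $\mathcal C$ is already a line bundle, and $\varphi$ is an isomorphism.
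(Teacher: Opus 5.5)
Your proposal is correct and follows essentially the same route as the paper: Grauert's theorem together with cohomology and base change for (a), the tautological line for (b), and for (c) the universal syzygy followed by two applications of Lemma~\ref{lem:flatness-finite-cokernel}, with the wedge map $\T_X\to\det(\T_X)\otimes\mathcal L^\vee$ between them. The only cosmetic difference is that you check the length $b$ directly as $c_2(\T_J(e))$ rather than citing the Bourbaki degree formula \eqref{eq:Bourbaki-deg-formula}.
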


\begin{proof}
Endow the open subset $W = W_e^j \subset I_e^j$ with its reduced
induced scheme structure. Let $\T_W$ denote the restriction of the
universal syzygy bundle to $\PP^2\times W$. Since $\T_W(e)$ is
coherent and flat over $W$, Grauert's theorem
\cite[Chapter~III, Corollary~12.9]{hartshorne2013algebraic}
implies that \(
\mathcal E = \mathcal{E}_e^j \doteq q_*\T_W(e) \)
is a locally free sheaf of rank $r_e$. 

For a point $s=[J]\in W$, consider the Cartesian square
\[
\begin{tikzcd}
\PP^2_{k(s)}
\arrow[r,"\widetilde{i}_s"]
\arrow[d,"q_s"']
&
\PP^2\times W_e^j
\arrow[d,"q"]
\\
\Spec k(s)
\arrow[r,"i_s"']
&
W_e^j .
\end{tikzcd}
\]
By cohomology and base change
\cite[Chapter~III, Corollary~12.9 and Theorem~12.11]{hartshorne2013algebraic}, the natural base-change morphism
\[
i_s^*q_*\T_W(e)
\longrightarrow
q_{s*}\widetilde{i}_s^*\T_W(e)
\]
is an isomorphism. Since
\[
i_s^*q_*\T_W(e)
=
(q_*\T_W(e))\otimes_{\mathcal O_W}k(s)
\]
and \(\widetilde{i}_s^*\T_W(e)
\simeq
\T_J(e)\),
we obtain
\begin{equation}\label{eq:univ-Bourbaki-fibers}
\mathcal E_e^j\otimes k([J])
\simeq
H^0\bigl(\PP^2,\T_J(e)\bigr)
\simeq
\syz(J)_e.
\end{equation}
This proves~(a).

We use the convention that
\(\PP_W(\mathcal E)\) parametrizes one-dimensional subspaces of the fibers of $\mathcal E$. Thus, if \(
\pi:\PP_W(\mathcal E)\longrightarrow W
\) denotes the projection, there is a tautological inclusion
\[
\mathcal O_{\PP(\mathcal E)}(-1)
\hookrightarrow
\pi^*\mathcal E.
\]
For every $[J]\in W$, the preceding base-change isomorphism in \eqref{eq:univ-Bourbaki-fibers} gives
\[
\pi^{-1}([J])
\simeq
\PP\bigl(
H^0(\PP^2,\T_J(e))
\bigr)
=
\PP(\syz(J)_e).
\]
Thus a point of $\PP(\mathcal E)$ is a pair \(([J],[\nu])\) where \([J] \in W_{e}^j\) and \(0\neq\nu\in\syz(J)_e\),
where $[\nu]$ is considered up to multiplication by a nonzero scalar. This shows~(b).

Let \(X\doteq\PP^2\times \PP(\mathcal E)\) and denote the projections by
\(
p:X\lar\PP^2\) and \(
q_\mathcal{E}:X\lar \PP(\mathcal E).
\)
Set
\[
r\doteq\id_{\PP^2}\times\pi:
X\longrightarrow\PP^2\times W.
\]
There is a canonical evaluation morphism
\[
q^*\mathcal E
=
q^*q_*\T_W(e)
\longrightarrow
\T_W(e).
\]
Pulling it back by $r$ and composing with the pullback of the
tautological inclusion gives
\[
{q_\mathcal{E}}^*\mathcal O_{\PP(\mathcal E)}(-1)
\longrightarrow
{q_\mathcal{E}}^*\pi^*\mathcal E
=
r^*q^*\mathcal E
\longrightarrow
r^*\T_W(e).
\]
After twisting by $p^*\mathcal O_{\PP^2}(-e)$, we obtain the
universal syzygy morphism
\[
\iota:
p^*\mathcal O_{\PP^2}(-e)
\otimes {q_\mathcal{E}}^*\mathcal O_{\PP(\mathcal E)}(-1)
\longrightarrow
r^*\T_W.
\]
Set
\[
\mathcal L
\doteq
p^*\mathcal O_{\PP^2}(-e)
\otimes
q_{\mathcal E}^*\mathcal O_{\PP(\mathcal E)}(-1),
\qquad
\mathcal T_X
\doteq
r^*\mathcal T_W.
\]
Then the universal syzygy morphism takes the form
\[
\iota:\mathcal L\longrightarrow\mathcal T_X.
\]
If \(
y=([J],[\nu])\in \PP(\mathcal E)\), then, under the natural identification \eqref{eq:univ-Bourbaki-fibers}, the fiber of the tautological line at $y$ is precisely the line \(k \cdot \nu\subseteq H^0(\PP^2,\T_J(e))\).
Consequently, the restriction of $\iota$ to
$\PP^2\times\{y\}$ is exactly the morphism
\[
\iota_y:
\mathcal O_{\PP^2}(-e)
\xlongrightarrow{\nu}
\T_J
\]
defined by the chosen minimal syzygy $\nu$. This morphism is injective. Indeed, $\nu$ is nonzero, and hence
generically nonzero. Moreover, if the coefficients of $\nu$ had a
nonconstant common factor, dividing by this factor would produce a
syzygy of degree strictly smaller than $e$, contradicting
\(e=\indeg(\syz(J))\). Thus $\nu$ has no divisorial zero, and its cokernel is a torsion-free
rank-one sheaf.

Since $\T_X = r^*(\T_W)$ is locally free, hence flat over $\mathbb{P}(\mathcal{E})$, and every geometric fiber of $\iota$ is injective, Lemma~\ref{lem:flatness-finite-cokernel} shows that $\iota$ is
injective and the cokernel \(\mathcal C\doteq\coker(\iota)\) is flat over $\mathbb{P}(\mathcal{E})$. Thus
\[
0\longrightarrow
\mathcal L
\xlongrightarrow{\iota}
\T_X
\longrightarrow
\mathcal C
\longrightarrow0
\]
is exact.

We now apply the determinant construction, as in Lemma~\ref{lem:compatibility-determinant}. Set \( \mathcal D \doteq \det(\T_X)\otimes\mathcal L^\vee\). The wedge pairing
\[
\mathcal L\otimes\T_X
\longrightarrow
\det(\T_X)
\]
induces a morphism \(\T_X\longrightarrow\mathcal D\) which annihilates $\mathcal L$. Hence it factors uniquely through $\mathcal C$, say by \(\bar\delta:
\mathcal C\longrightarrow\mathcal D\).

On the fiber over $y=([J],[\nu])$, this is precisely the canonical
inclusion
\[
\mathcal I_{B_\nu(J)}(e-d)
\hookrightarrow
\mathcal O_{\PP^2}(e-d).
\]
Indeed, \(\det(\T_J)\simeq\mathcal O_{\PP^2}(-d)\), so \(\mathcal D_y
\simeq
\mathcal O_{\PP^2}(e-d)\),
and the quotient is supported on the Bourbaki scheme $B_\nu(J)$.
Its length is
\[
\length B_\nu(J)
=
e(e-d)+d^2-j
=
b
\]
by the Bourbaki degree formula. Thus every geometric fiber of \(\bar\delta:\mathcal C\lar\mathcal D
\) is injective and has cokernel of length $b$. Since $\mathcal D$ is
invertible, another application of
Lemma~\ref{lem:flatness-finite-cokernel} shows that
\[
\mathcal I_{\mathcal B}
\doteq
\mathcal C\otimes\mathcal D^\vee
\hookrightarrow
\mathcal O_X
\]
defines a finite flat family \(\mathcal B\subseteq\PP^2\times \mathbb{P}(\mathcal{E})
\) of degree $b$ over $\mathbb{P}(\mathcal{E})$. By construction, \(
\mathcal B_y=B_\nu(J)\) for every geometric point $y=([J],[\nu])$.
\end{proof}

\begin{Theorem}\label{thm:morphism-Bourbaki-scheme}
Fix $d\geq1$, $j\geq1$, and $1\leq e\leq d$, and let
$W_e^j$ and $\mathcal E_e^j$ be as in
Lemma~\ref{lem:universal-Bourbaki-construction}. Then there is a
morphism
\[
\begin{aligned}
\varphi_{j,e}:\PP(\mathcal E_e^j)
&\longrightarrow
\Hilbert^b(\PP^2),\\
([J],[\nu])
&\longmapsto
[B_\nu(J)],
\end{aligned}
\]
where \(b=e(e-d)+d^2-j\).
\end{Theorem}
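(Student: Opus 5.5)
The morphism will come from the representability of the Hilbert functor, applied to the universal Bourbaki family of Lemma~\ref{lem:universal-Bourbaki-construction}. Part~(c) of that lemma gives a closed subscheme
\[
\mathcal B\subseteq\PP^2\times\PP(\mathcal E_e^j),
\]
finite and flat of degree $b=e(e-d)+d^2-j$ over $\PP(\mathcal E_e^j)$. Its geometric fiber over $y=([J],[\nu])$ is the Bourbaki scheme $B_\nu(J)$. A finite flat family of length-$b$ subschemes is exactly a $\PP(\mathcal E_e^j)$-point of $\HilbertF^b_{\PP^2/k}$, since its Hilbert polynomial is the constant $b$. Because $\Hilbert^b(\PP^2)$ represents $\HilbertF^b_{\PP^2/k}$, this family corresponds to a unique morphism
\[
\varphi_{j,e}:\PP(\mathcal E_e^j)\longrightarrow\Hilbert^b(\PP^2)
\]
with $(\id\times\varphi_{j,e})^{-1}(\text{universal family})=\mathcal B$.

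The plan is to follow the Yoneda pattern of Theorem~\ref{thm:hilb-map-j}. For any $k$-scheme $T$, I define a natural transformation from $\Hom(T,\PP(\mathcal E_e^j))$ to $\HilbertF^b(T)$ by sending $g$ to $(\id_{\PP^2}\times g)^{-1}\mathcal B$. This is well defined because flatness, finiteness and fiber length are all stable under base change. It is functorial because pullback of closed subschemes is compatible with composition. Yoneda's lemma then produces $\varphi_{j,e}$, and in fact it is simply the classifying map of $\mathcal B$.

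It remains to identify $\varphi_{j,e}$ on points. Take a geometric point $y=([J],[\nu])$, i.e.\ $\Spec\kappa\to\PP(\mathcal E_e^j)$. Its image is the fiber $\mathcal B_y$, and Lemma~\ref{lem:universal-Bourbaki-construction}(c) identifies this fiber with $B_\nu(J)$.

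The one point needing care is that this fiber identification holds scheme-theoretically and not just set-theoretically. This comes down to two facts: $\mathcal I_{\mathcal B}=\mathcal C\otimes\mathcal D^\vee$ commutes with base change, and the restriction of $\bar\delta$ to the fiber is the canonical inclusion $\mathcal I_{B_\nu(J)}(e-d)\hookrightarrow\mathcal O_{\PP^2}(e-d)$. Base change holds because $\mathcal C$ and $\mathcal D$ are flat, so their formation commutes with restriction to fibers, and because the determinant construction of Lemma~\ref{lem:compatibility-determinant}(c) is compatible with base change. I expect this base-change compatibility to be the main, though mild, obstacle; everything else is formal.
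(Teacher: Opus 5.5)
Your proposal is correct and follows the paper's argument: the finite flat degree-$b$ family $\mathcal B\subseteq\PP^2\times\PP(\mathcal E_e^j)$ from Lemma~\ref{lem:universal-Bourbaki-construction}(c) is a $\PP(\mathcal E_e^j)$-point of the Hilbert functor. Representability then gives the classifying morphism $\varphi_{j,e}$, whose value at $y=([J],[\nu])$ is the fiber $\mathcal B_y=B_\nu(J)$. Your separate Yoneda construction and the scheme-theoretic base-change check are harmless but redundant, since they are already covered by the universal property of $\Hilbert^b(\PP^2)$ and by part~(c) of that lemma, which the paper simply cites.
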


\begin{proof}
By Lemma~\ref{lem:universal-Bourbaki-construction}, there is a finite
flat family
\(
\mathcal B\subseteq\PP^2\times \mathbb{P}(\mathcal{E}_e^j)
\)
whose geometric fibers have constant length \(b=e(e-d)+d^2-j\). By the universal property of the Hilbert scheme, this family induces
a unique morphism
\[
\varphi_{j,e}:
\mathbb{P}(\mathcal{E}_e^j)
\longrightarrow
\Hilbert^b(\PP^2).
\]
For every geometric point \(
([J],[\nu])\in \mathbb{P}(\mathcal{E}_e^j)
\), the construction of $\mathcal B$ gives \(
\varphi_{j,e}([J],[\nu])=[B_\nu(J)]\), as claimed.
\end{proof}

In some cases the minimum value $r_e$ is attained on the whole
stratum, so that set-theoretically $W_e^j=I_e^j$. If moreover
$r_e=1$, then
\[
\PP(\mathcal E_e^j)\simeq W_e^j=(I_e^j)_{\mathrm{red}},
\]
and the universal Bourbaki construction gives a morphism
\[
(I_e^j)_{\mathrm{red}}
\longrightarrow
\Hilbert^b(\PP^2).
\]
Whenever $I_e^j$ is known to be reduced (for example, the free case, by Proposition~\ref{prop:free-strata-smooth} below), this is a morphism \(I_e^j\longrightarrow\Hilbert^b(\PP^2)\).

\begin{Proposition}\label{prop:generic-initial-degree}
Let $d \geq 1$ and $1 \leq j \leq d^2$. If $Z_j$ is irreducible, then there exists a unique $e_0 \in \{1, \ldots, d\}$ such that $I_{e_0}^j \subset Z_j$ is a nonempty dense open subset. Moreover,
$$
e_0 = \max\{ \indeg(\T_{[f]}): [f] \in Z_j \}.
$$
\end{Proposition}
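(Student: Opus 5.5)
The argument will use only Lemma~\ref{lem:indeg-lowersemicontinuous} (lower semicontinuity of $\indeg$ on $Z_j$) together with the finite decomposition $Z_j=\bigsqcup_{1\leq e\leq d}I_e^j$ from Theorem~\ref{thm:strata}(ii).

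First, for each $e$ consider the closed subsets $C_{\leq e}\doteq\{[f]\in Z_j:\indeg(\T_{[f]})\leq e\}$ provided by Lemma~\ref{lem:indeg-lowersemicontinuous}. We have $C_{\leq d}=Z_j$, because a minimally generated triple always has the Koszul-type syzygies $(f_2,-f_1,0)$ of degree $d$; hence $\indeg\leq d$ everywhere. Therefore the integer
\[
e_0\doteq\max\{\indeg(\T_{[f]}):[f]\in Z_j\}\in\{1,\ldots,d\}
\]
is well defined, because $Z_j\neq\emptyset$ by irreducibility. The complement of $C_{\leq e_0-1}$ is then open, and since no point has $\indeg>e_0$, it is precisely $\{[f]:\indeg=e_0\}=I_{e_0}^j$. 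This set is nonempty by the choice of $e_0$, so it is a nonempty open subset of the irreducible space $Z_j$ and hence dense.

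For uniqueness, suppose $I_{e}^j$ is also a nonempty dense open subset for some $e\neq e_0$. Two nonempty open subsets of an irreducible space intersect, whereas the strata $I_e^j$ and $I_{e_0}^j$ are disjoint. This is a contradiction, so $e=e_0$. The same reasoning shows that any $e$ with $I_e^j$ dense open must equal the maximum, which yields the formula for $e_0$.

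There is no serious obstacle here. The only point requiring care is that the open set in Lemma~\ref{lem:indeg-lowersemicontinuous} is the locus where $\indeg>e$, so the generic value is the maximum rather than the minimum. One must also note that the set-theoretic statement is unaffected by the determinantal scheme structure placed on $I_e^j$; this is why we work with underlying sets $|I_e^j|$ throughout.
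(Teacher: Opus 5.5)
Your proposal is correct and follows essentially the same route as the paper. You take $e_0$ maximal, use Lemma~\ref{lem:indeg-lowersemicontinuous} to identify $I_{e_0}^j$ with the open locus $\{\indeg>e_0-1\}$, conclude density from irreducibility, and get uniqueness from the disjointness of the strata. Your Koszul-syzygy argument for $\indeg\leq d$ simply makes explicit what the paper takes from the decomposition $Z_j=\bigsqcup_{1\leq e\leq d}I_e^j$ in Theorem~\ref{thm:strata}.
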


\begin{proof}
Since the finitely many strata $I_e^j$ cover $Z_j$, there exists a
maximal integer
\[
e_0
=
\max\left\{
e:I_e^j\neq\emptyset
\right\}.
\]
By Lemma~\ref{lem:indeg-lowersemicontinuous}, the locus
\[
\left\{
[J]\in Z_j:
\indeg(\T_J)>e_0-1
\right\}
\]
is open in $Z_j$. By maximality of $e_0$, this locus is precisely
$I_{e_0}^j$. Hence $I_{e_0}^j$ is a nonempty open subset of the
irreducible scheme $Z_j$, and therefore is dense.

If another stratum $I_e^j$, with $e\neq e_0$, were dense, it would
intersect the nonempty open subset $I_{e_0}^j$, contradicting the
disjointness of the initial-degree strata. Thus $I_{e_0}^j$ is the
unique dense stratum.

By its definition, \(e_0=\max\left\{
\indeg(\T_J):[J]\in Z_j
\right\}\), and the result follows.
\end{proof}

We illustrate this phenomenon with some examples:

\begin{Example}
Return to Example~\ref{ex:subscheme-flat-and-quot-flat}. It defines
a morphism
\[
\gamma:\mathbb A^1\longrightarrow Z_6.
\]
At $t=0$, \((0,y,-3z) \in \syz(J_0)_1\), so \(\gamma(0)\in I_1^6\). For $t\neq0$ a direct coefficient comparison shows that no nonzero
linear syzygy exists, while
\[
(0,-y^2,2tx^2+3yz)
\]
is a quadratic syzygy. Hence
\(\gamma(t)\in I_2^6\) for $t \neq 0$. Thus
\[
\gamma(0)\in
\overline{I_2^6}\cap I_1^6,
\]
so $I_2^6$ is not closed in $Z_6$.

Moreover, \(\Bour(J_t)=1\) for every $t$, and the two minimal resolutions are
\[
0\lar R(-3)\lar R(-2)^3\lar\syz(J_t)\lar0
\qquad(t\neq0),
\]
and
\[
0\lar R(-4)\lar
R(-1)\oplus R(-3)^2
\lar\syz(J_0)\lar0.
\]
\end{Example}

The present study raises the following question, which we answer for the free case.

\begin{Question}
Is there a closed formula for
\(\dim I_e^j\)
in terms of $d,e$, and $j$?
\end{Question}

For $Z_j$, Theorem~\ref{thm:open-Z-j-quot-scheme} provides an
obstruction space, while
Lemma~\ref{lem:dimension-Z-j-smooth-points} gives a closed dimension
formula at unobstructed points. For the finer strata $I_e^j$, a
dimension formula is presently available in particular when $e$ is
the generic, equivalently maximal, initial degree on an irreducible
stratum $Z_j$, since then $I_e^j$ is dense open in $Z_j$ by
Proposition~\ref{prop:generic-initial-degree}.

For fixed degree $d \geq 1$, the strata $I_e^{d(d-e)+e^2}$ coincide with the locus of free ideals with splitting $\mathcal{O}_{\mathbb{P}^2}(-e) \oplus \mathcal{O}_{\mathbb{P}^2}(e-d)$. In the following proposition, we use a geometric argument to compute its dimension.

\begin{Proposition}\label{prop:free-strata-smooth}
Let $d\geq1$, \(
1\leq e\leq \lfloor d/2 \rfloor\) and \(j=d(d-e)+e^2\). Then $I_e^j$ is a smooth irreducible variety. Its dimension is
\[
\dim I_e^j
=
\begin{cases}
\displaystyle
3\left[
\binom{e+2}{2}
+
\binom{d-e+2}{2}
\right]
-
\left[
2+\binom{d-2e+2}{2}
\right],
& e<d/2,\\[4mm]
\displaystyle
6\binom{d/2+2}{2}-4,
& e=d/2.
\end{cases}
\]
Equivalently,
\[
\dim I_e^j
=
\begin{cases}
d^2-de+e^2+3d+3e+3,
& e<d/2,\\[2mm]
\displaystyle
\frac34d^2+\frac92d+2,
& e=d/2.
\end{cases}
\]
\end{Proposition}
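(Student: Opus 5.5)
The plan is to parametrize the free locus by Hilbert--Burch matrices and realize $I_e^j$ as a geometric quotient of an open subset of an affine space by a free action of a connected group. Let
\[
V \doteq R_e^{\oplus 3}\oplus R_{d-e}^{\oplus 3}
\]
be the space of $3\times 2$ matrices $M=(a\,|\,b)$ whose first column has degree $e$ and second column degree $d-e$. Let $V^\circ\subset V$ be the open subset where the $2\times 2$ minors $(\Delta_{23},-\Delta_{13},\Delta_{12})$ are $k$-linearly independent and have no common factor. The minors define a morphism
\[
\mu:V^\circ\longrightarrow U_d .
\]
By the Hilbert--Burch theorem, a triple $J$ is free with $\syz(J)\simeq R(-e)\oplus R(e-d)$ exactly when $J$ is, up to scalar, the vector of minors of such an $M$. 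The columns of $M$ are then a basis of syzygies, and $\deg V(J)=d(d-e)+e^2$ follows from the Bourbaki degree formula with $\Bour=0$. Hence $\mu(V^\circ)=I_e^j$ set-theoretically, and $V^\circ$ is a nonempty open subset of an affine space. Nonemptiness uses $M$ with columns $(x^e,y^e,0)^t$ and $(0,z^{d-e},x^{d-e})^t$, or a similar explicit choice.

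Next I identify the fibres of $\mu$. Let $G\doteq\Aut(R(-e)\oplus R(e-d))$, acting on $V^\circ$ by $M\mapsto Mg$. For $e<d/2$, $G$ consists of upper triangular matrices $\bigl(\begin{smallmatrix}\lambda & c\\ 0&\lambda'\end{smallmatrix}\bigr)$ with $\lambda,\lambda'\in k^*$ and $c\in R_{d-2e}$, so
\[
\dim G=2+\binom{d-2e+2}{2}.
\]
For $e=d/2$, $G=\GL_2$ and $\dim G=4$. Acting by $g$ multiplies the minors by $\det g$, so $\mu$ is $G$-invariant. Conversely, if $\mu(M)=\mu(M')$, then both column spaces equal $\syz(J)$, so $M'=Mg$ for a unique $g\in G$. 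The stabilizers are trivial because $M$ is injective. Thus the fibres of $\mu$ are exactly the $G$-orbits and $G$ acts freely. Since $V^\circ$ is irreducible, $I_e^j$ is irreducible, and
\[
\dim I_e^j=\dim V-\dim G .
\]
This is $3\bigl[\binom{e+2}{2}+\binom{d-e+2}{2}\bigr]-\bigl[2+\binom{d-2e+2}{2}\bigr]$ when $e<d/2$, and $6\binom{d/2+2}{2}-4$ when $e=d/2$. The stated simplified closed forms then follow by expanding the binomial coefficients, which is routine.

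Smoothness is the main obstacle, since $I_e^j$ carries a determinantal scheme structure and Proposition~\ref{prop:dim-at-free-points} only gives unobstructedness when $2e\geq d-2$. I would first show that $I_e^j$ is open in $Z_j$. Inside $Z_j$ with $j=d(d-e)+e^2$, the numerical bounds $d(d-e')\leq j\leq d(d-e')+e'^2$ should force $e'\geq e$ for every point. Lemma~\ref{lem:indeg-lowersemicontinuous} then makes $\{\indeg\geq e\}$ open, hence $I_e^j=Z_j\cap\{\indeg\le e\}$ is open too. It therefore suffices to prove $Z_j$ smooth at free points. For this I would show that $\mu$, viewed as a morphism to $Z_j$, is smooth, which I would do by proving that its differential surjects onto
\[
T_{[J]}Z_j\simeq\Hom(\T_J,\mathcal I_Z(d))=H^0(\mathcal I_Z(d+e))\oplus H^0(\mathcal I_Z(2d-e)).
\]
A first-order perturbation $(\delta a,\delta b)$ of $M$ induces the homomorphism $\T_J\to\mathcal I_Z(d)$ sending the column $a$ to the perturbed minor expression $\sum_i \delta a_i\cdot(\text{cofactors})$, and similarly for $b$. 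Using the Hilbert--Burch resolution, every element of $H^0(\mathcal I_Z(d+e))$ is an $R_e$-combination of the minors, and likewise in degree $2d-e$. I expect these to be realized exactly by the cofactor expansions, giving surjectivity.

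Granting this, $\dim T_{[J]}Z_j\le\dim V-\dim G=\dim I_e^j$, so the tangent dimension equals the local dimension. This yields smoothness and reducedness of $I_e^j$, and, as a consistency check, agreement with the tangent-dimension formula of Proposition~\ref{prop:dim-at-free-points}. I would verify the surjectivity carefully, and if needed fall back on comparing $\dim\Hom(\T_J,\mathcal I_Z(d))$ directly with $\dim V-\dim G$.
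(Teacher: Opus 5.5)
Your Hilbert--Burch parametrization, the identification of the fibres of $\mu$ with free $G$-orbits, and the resulting irreducibility and dimension count are the same as the paper's argument. The paper works on $\PP(V)$ modulo $G/k^\times$, which changes nothing.

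\textbf{The openness step is wrong.} The numerical bounds do not force $e'\geq e$ on $Z_j$. For $d=4$, $e=2$, $j=12$ the value $e'=1$ is admissible. Example~\ref{ex:1-par-family-free-degeneration} produces, for $d=6$, $e=3$, a point of $I_2^{27}\subset Z_{27}$ lying in the closure of $I_3^{27}$. The deduction is also inverted: if $e'\geq e$ held everywhere, Lemma~\ref{lem:indeg-lowersemicontinuous} would make $I_e^j=Z_j\cap\{\indeg\leq e\}$ \emph{closed}, not open, contradicting that same example.

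What is true is $\indeg\leq e$ on $Z_j$. The basic bound $j\leq d(d-e')+e'^2$ excludes $e<e'<d-e$. Proposition~\ref{prop:semistable-refined-bound}, not the basic bounds, excludes $e'>e$ with $e'\geq d-e$. Then $I_e^j=Z_j\cap\{\indeg\geq e\}$ is open.

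You can also drop this step entirely. Once you know
\[
\dim I_e^j\leq\dim_{[J]}Z_j\leq\dim T_{[J]}Z_j\leq\dim V-\dim G=\dim I_e^j,
\]
the ring $\mathcal O_{Z_j,[J]}$ is regular, hence a domain. The surjection $\mathcal O_{Z_j,[J]}\to\mathcal O_{I_e^j,[J]}$ onto a local ring of the same dimension therefore has zero kernel. This gives openness, reducedness and smoothness of the determinantal scheme $I_e^j$ at once; it is the argument of Corollary~\ref{cor:free-points-picture}.

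A minor point: your explicit matrix has minors $(x^{d-e}y^e,-x^d,x^ez^{d-e})$, all divisible by $x^e$. The columns $(x^e,y^e,0)^t$ and $(0,x^{d-e},y^{d-e})^t$ give $(y^d,-x^ey^{d-e},x^d)$ instead, which works.

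\textbf{The rest of your smoothness argument is valid, by a different route.} The surjectivity you expect does hold. Write $f=(\Delta_{23},-\Delta_{13},\Delta_{12})$ for the signed minors of $M=(a\,|\,b)$. Differentiating $f\cdot a\equiv0$ and $f\cdot b\equiv0$ shows that, up to sign, $d\mu(\delta a,\delta b)$ is the homomorphism $a\mapsto f\cdot\delta a$, $b\mapsto f\cdot\delta b$. Since $H^1(\T_J(m))=0$ for all $m$, every section of $\mathcal I_Z(d+m)$ lies in $J_{d+m}$. Hence $f\cdot R_e^{\oplus3}=H^0(\mathcal I_Z(d+e))$, and likewise in degree $2d-e$. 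Your fallback also works: Proposition~\ref{prop:dim-at-free-points}(a), together with $h^0(\mathcal O_{\PP^2}(d-2e-3))=\binom{d-2e-1}{2}$, gives exactly $\dim V-\dim G$.

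The paper instead proves that the orbit map $U\to I_e^j$ is smooth. It identifies $U$ with the open subscheme of fibrewise isomorphisms inside $\PP(q_*\mathcal Hom(p^*\T,\mathscr T))$, where $\T=\mathcal O_{\PP^2}(-e)\oplus\mathcal O_{\PP^2}(e-d)$, using $H^1(\mathcal End(\T))=0$ and cohomology and base change. Smoothness then descends from $U$ to $I_e^j$. Openness in $Z_j$ and smoothness of $Z_j$ along $I_e^j$ come only afterwards, in Corollary~\ref{cor:free-points-picture}, via the tangent-dimension comparison you use.

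Your route avoids the projective-bundle construction and proves the proposition and that corollary together, but it depends on the Quot description of $T_{[J]}Z_j$. The paper's route gives smoothness of $I_e^j$ intrinsically, and a smooth quotient map, independently of the ambient $Z_j$.
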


\begin{proof}
Set \( \T \doteq
\mathcal O_{\PP^2}(-e)
\oplus
\mathcal O_{\PP^2}(e-d)
\) and
\[
V\doteq H^0(\T^\vee)
=
H^0(\mathcal O_{\PP^2}(e))
\oplus
H^0(\mathcal O_{\PP^2}(d-e)).
\]
Thus \(\Hom(\T,\mathcal O_{\PP^2}^{\oplus3}) \simeq V^{\oplus3}\). Let \( P\doteq\PP(V^{\oplus3})\).
A point $[\varphi]\in P$ may be represented by a matrix
\[
\varphi=
\begin{pmatrix}
a_{11}&a_{12}\\
a_{21}&a_{22}\\
a_{31}&a_{32}
\end{pmatrix},
\]
where
\(
a_{i1}\in H^0(\mathcal O_{\PP^2}(e))\) and \(a_{i2}\in H^0(\mathcal O_{\PP^2}(d-e))\). 

Let $U\subset P$ be the locus where $\varphi$ has generic rank two
and the cokernel
\[
\mathcal C_\varphi\doteq\coker(\varphi)
\]
is torsion-free. Equivalently, the three maximal minors of $\varphi$
have no common divisorial factor. These are open conditions.
Consequently $U$ is a nonempty open subset of the projective space
$P$, and hence is smooth and irreducible, with
\[
\dim U
=
3\dim V-1.
\]

For $[\varphi]\in U$, the morphism
\[
\varphi:\T\longrightarrow\mathcal O_{\PP^2}^{\oplus3}
\]
is injective. Indeed, it is generically injective, and its kernel is
a torsion subsheaf of the locally free sheaf $\T$, hence is zero.
The cokernel is therefore a torsion-free rank-one sheaf with
determinant $\mathcal O_{\PP^2}(d)$, and hence \(\mathcal C_\varphi
\simeq
\mathcal I_Z(d)\)
for the zero-dimensional subscheme $Z=V(\Delta_{12}(\varphi), - \Delta_{13}(\varphi), \Delta_{23}(\varphi))\subset\PP^2$.

The Hilbert polynomial is determined by the exact sequence
\[
0\longrightarrow
\T
\xlongrightarrow{\varphi}
\mathcal O_{\PP^2}^{\oplus3}
\longrightarrow
\mathcal I_Z(d)
\longrightarrow0.
\]
A direct Chern class computation gives \(
\deg(Z)=d(d-e)+e^2=j\). Thus the maximal minors of $\varphi$ determine a point of $I_e^j$,
and we obtain a surjective morphism
\[
\pi:U\longrightarrow I_e^j.
\]

We next describe its fibers. Let
\[
G\doteq
\Aut(\T)/(k^\times \cdot \Id_{\T}).
\]
Precomposition gives an action of $G$ on $U$, and $\pi$ is
$G$-invariant. Moreover, two points of $U$ have the same image in
$I_e^j$ if and only if they differ by this action. Indeed, two
matrices determining the same triple of maximal minors give two
Hilbert--Burch resolutions of the same ideal, and uniqueness of the
minimal free resolution yields an automorphism of $\T$ relating the
two matrices. Hence the fibers of $\pi$ are precisely the
$G$-orbits.

We now show that $\pi$ is smooth. Let $\mathscr T$ denote the
universal syzygy bundle on \(\PP^2\times I_e^j\), and let
\[
p:\PP^2\times I_e^j\lar\PP^2,
\qquad
q:\PP^2\times I_e^j\lar I_e^j
\]
be the projections. Every geometric fiber of $\mathscr T$ is
isomorphic to $\T$. Since \(H^1(\mathcal End(\T))=0\), cohomology and base change
\cite[Chapter~III, Theorem~12.11]{hartshorne2013algebraic}
implies that
\[
\mathcal H
\doteq
q_*\mathcal Hom(p^*\T,\mathscr T)
\]
is locally free and that its formation commutes with base change.
In particular,
\[
\mathcal H\otimes k([J])
\simeq
\Hom(\T,\T_J)
\]
for every $[J]\in I_e^j$.

Let \(\mathscr P\subseteq\PP_{I_e^j}(\mathcal H)\) be the open locus parametrizing fiberwise isomorphisms
\[
\alpha:\T\xlongrightarrow{\sim}\T_J
\]
up to a nonzero scalar. The projection \(\rho:\mathscr P\longrightarrow I_e^j
\) is smooth, since $\mathscr P$ is open in a projective bundle, and it
is surjective because $\T_J\simeq\T$ for every $[J]\in I_e^j$.

We claim that $\mathscr P$ is naturally isomorphic to the open set
$U\subset\PP\Hom(\T,\mathcal O_{\PP^2}^{\oplus3})$ introduced
above. Indeed, over a point $[J]\in I_e^j$, composition with the
universal syzygy inclusion
\[
\jmath_J:\T_J\hookrightarrow\mathcal O_{\PP^2}^{\oplus3}
\]
sends an isomorphism
\[
\alpha:\T\xlongrightarrow{\sim}\T_J \text{ to } \jmath_J\circ\alpha:
\T\longrightarrow\mathcal O_{\PP^2}^{\oplus3}.
\]
Its cokernel is $\mathcal I_Z(d)$, so it determines a point of $U$. Conversely, if $[\varphi]\in U$, then
\[
0\longrightarrow\T
\xlongrightarrow{\varphi}
\mathcal O_{\PP^2}^{\oplus3}
\longrightarrow\mathcal I_Z(d)
\longrightarrow0
\]
is exact. Hence
\[
\im(\varphi)
=
\ker\bigl(
\mathcal O_{\PP^2}^{\oplus3}
\lar\mathcal I_Z(d)
\bigr)
=
\T_J,
\]
and $\varphi$ induces an isomorphism \(\T\xlongrightarrow{\sim}\T_J\). These constructions are inverse and are compatible with families, so \(\mathscr P\simeq U \)
over $I_e^j$. Consequently,
\[
\pi:U\longrightarrow I_e^j
\]
is smooth and surjective.

Since $U$ is an open subset of a projective space, it is smooth and
irreducible. To see that $I_e^j$ is smooth, let $u\in U$ map to
$y\in I_e^j$, and let $r$ be the relative dimension of $\pi$ at
$u$. Smoothness of $\pi$ gives
\[
0\longrightarrow T_u(U_y)
\longrightarrow T_uU
\longrightarrow T_yI_e^j
\longrightarrow0,
\]
with \( \dim T_u(U_y)=r\). Since $U$ is smooth and
\[
\dim_uU=\dim_yI_e^j+r,
\]
we obtain
\[
\dim T_yI_e^j
=
\dim T_uU-r
=
\dim_uU-r
=
\dim_yI_e^j.
\]
Thus $I_e^j$ is smooth at $y$. Since $y$ was arbitrary,
$I_e^j$ is smooth. Moreover, $\pi$ is surjective and $U$ is irreducible, so
$I_e^j$ is irreducible.

It remains to compute the dimension. If $e<d/2$, then
\[
\Aut(\T)
=
\left\{
\begin{pmatrix}
a&\psi\\
0&b
\end{pmatrix}
:
a,b\in k^\times,\ 
\psi\in H^0(\mathcal O_{\PP^2}(d-2e))
\right\},
\]
and therefore
\[
\dim\Aut(\T)
=
2+\binom{d-2e+2}{2}.
\]
If $e=d/2$, then
\(
\T\simeq\mathcal O_{\PP^2}(-e)^{\oplus2}\) and \(\Aut(\T)\simeq\GL_2(k)\),
so \(\dim\Aut(\T)=4\).

Since \(\dim G=\dim\Aut(\T)-1\)
and the fibers of $\pi$ have dimension $\dim G$, we obtain
\[
\dim I_e^j
=
\dim U-\dim G
=
3\dim V-\dim\Aut(\T).
\]
Substituting
\[
\dim V
=
\binom{e+2}{2}+\binom{d-e+2}{2}
\]
gives the first displayed formula. Simplifying yields
\[
\dim I_e^j
=
d^2-de+e^2+3d+3e+3
\]
when $e<d/2$, and
\[
\dim I_{d/2}^j
=
\frac34d^2+\frac92d+2
\]
when $e=d/2$.
\end{proof}

Comparing with the previous results about free points, we get the following corollary:

\begin{Corollary}\label{cor:free-points-picture}
Let $d \geq 1$, $1 \leq e \leq \lfloor d/2 \rfloor$ and $j = d(d-e)+e^2$. Then the natural locally closed immersion of the free stratum 
$$
I_e^j \hookrightarrow Z_j
$$
is an open immersion. The scheme $I_e^j$ is smooth and irreducible, and $Z_j$ is smooth along $I_e^j$.

Furthermore, the closure $\overline{I_e^j} \subset Z_j$ is an irreducible component of $Z_j$. Hence $I_e^j$ is a smooth dense open subset of $\overline{I_e^j}$.
\end{Corollary}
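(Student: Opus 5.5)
The plan is to show openness of $I_e^j$ in $Z_j$ by upper semicontinuity of cohomology, then to transfer the smoothness and dimension information already known for $I_e^j$ to $Z_j$ at free points.

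\textbf{Openness.} Work with the flat universal syzygy bundle $\T_S$ on $\PP^2\times Z_j$ from Theorem~\ref{thm:strata}. For $[J]\in Z_j$, $\T_J$ is a rank-two bundle with $c_1=-d$ and $c_2=j-\ldots$; precisely, $c_2(\T_J)=d^2-j$ from $\ch(\T_J)=(2,-d,j-d^2/2)$. For $j=d(d-e)+e^2$ this gives $c_2=e(d-e)$, which is the second Chern class of $\mathcal O(-e)\oplus\mathcal O(e-d)$. The condition $\T_J\simeq\mathcal O(-e)\oplus\mathcal O(e-d)$ should be characterized by two requirements:
\[
h^0(\T_J(e-1))=0 \quad\text{and}\quad h^1(\T_J(m))=0 \text{ for all } m.
\]
By Horrocks' criterion, the vanishing of $H^1_*$ gives splitting, and the Chern classes together with the initial degree then force the summands. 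To make this open, I would avoid infinitely many twists. Any point of $Z_j$ has $\indeg\le e$ here, because $\Bour\ge0$ together with $\Bour=e'(e'-d)+d^2-j$ forces $e'(e'-d)\ge -e(d-e)$, hence $e'\le e$ or $e'\ge d-e$; since $e\le d/2$, the case $e'>e$ needs $e'\ge d-e$. So the cleaner route is via the Bourbaki degree formula: on $Z_j$, the locus $\Bour=0$ is exactly $\{e'\in\{e,d-e\}\}$. Lemma~\ref{lem:indeg-lowersemicontinuous} shows that $\{\indeg\ge e\}$ is open in $Z_j$. On that set, $\indeg\in\{e\}\cup[d-e,d]$, and $\indeg\ge d-e>d/2$ would give $\Bour=e'(e'-d)+e(d-e)<0$ unless $e'\in\{e,d-e\}$. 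The value $e'=d-e\neq e$ is impossible, because then $\syz(J)\simeq R(e-d)\oplus R(-e)$ has a generator in degree $e<d-e$. Hence $\{\indeg\ge e\}=I_e^j$, which is open. The remaining check is that this locus is open scheme-theoretically, not just as a set. For that I would compare the determinantal structure $Y_{e-1,0}\times D_{a_e-1}(\mu_e)$ with the open subscheme of $Z_j$, using that $Y_{e-1,0}$ is open and the degeneracy condition is automatic there.

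\textbf{Smoothness and components.} Proposition~\ref{prop:free-strata-smooth} gives that $I_e^j$ is smooth and irreducible. Since $I_e^j\hookrightarrow Z_j$ is an open immersion, $Z_j$ is smooth along $I_e^j$. In particular this holds even when $2e<d-2$, where Proposition~\ref{prop:dim-at-free-points} shows the obstruction space is nonzero; as a consistency check, its tangent dimension matches $\dim I_e^j$. Finally, the closure of a nonempty irreducible open subset of $Z_j$ is an irreducible component: it is irreducible, and any irreducible closed $C\supseteq\overline{I_e^j}$ meets the open $I_e^j$, so $C\cap I_e^j$ is dense in $C$, giving $C=\overline{I_e^j}$.

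\textbf{Expected obstacle.} The main difficulty is the scheme-theoretic openness, namely identifying the determinantal scheme structure on $I_e^j$ with the open subscheme structure. Set-theoretic openness is immediate from the Bourbaki-degree argument above. If the structures disagree, the fallback is to use the reduced structure, which is legitimate since $I_e^j$ is reduced by Proposition~\ref{prop:free-strata-smooth}.
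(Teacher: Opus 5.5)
You have a genuine gap in the openness argument. On $Z_j$ with $j=d(d-e)+e^2$, write $e'=\indeg(\syz(J))$. The Bourbaki formula then reads $\Bour(J)=(e'-e)(e'+e-d)$. For $e'>d-e$ this is strictly \emph{positive}, not negative as you assert.

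Consequently, neither $\Bour\geq0$ nor $\Bour\leq e'^2$ excludes points of $Z_j$ with $\indeg>d-e$. For example, $d=3$, $e=1$, $j=7$, $e'=3$ gives $\Bour=2$, which is numerically admissible. So the identification $\{\indeg\geq e\}=I_e^j$, on which your whole openness claim rests, is not proved.

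The identification is true, but it needs a further input such as Proposition~\ref{prop:semistable-refined-bound}. For $e'\geq d-e\geq\lceil d/2\rceil$ the refined bound gives $j\leq\frac{d(d+1)}{2}+(d-1)e'-e'^2\leq d(d-e)+e^2-\binom{d-2e+1}{2}$. The second inequality is strict unless $e'=d-e$. With $j=d(d-e)+e^2$ this is impossible unless $d=2e$ and $e'=e$.

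Second, you deduce smoothness of $Z_j$ along $I_e^j$ from a scheme-theoretic open immersion that you do not establish. On $Y_{e-1,0}$ the maximal minors of $\mu_e$ vanish at every point, so they are nilpotent. Concluding that they vanish identically needs one of two things:
\begin{itemize}
\item reducedness of $Z_j$ there, which is circular;
\item an extra argument. For instance, $H^1(\T_J(e))=0$ at free points plus cohomology and base change makes $\ker\mu_e$ a locally split, locally free subsheaf of positive rank, so $\wedge^{a_e}\mu_e=0$.
\end{itemize}
Your fallback to the reduced structure says nothing about $Z_j$ itself. Smoothness of $Z_j$ at free points is a real issue when $2e\leq d-3$, where the obstruction group is nonzero.

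The paper sidesteps both problems by using what you call a consistency check as the actual proof. By Propositions~\ref{prop:dim-at-free-points} and~\ref{prop:free-strata-smooth}, $\dim T_{[J]}Z_j=\dim I_e^j$. For the surjection $A=\mathcal O_{Z_j,[J]}\twoheadrightarrow B=\mathcal O_{I_e^j,[J]}$ this gives $\dim A\geq\dim B=\dim T_{[J]}Z_j\geq\dim A$. Hence $A$ is regular, so a domain, and the surjection is an isomorphism. This yields smoothness of $Z_j$ and the open immersion at once, without any control of higher initial degrees.

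Your closing argument, that the closure of a nonempty irreducible open subset is a component, is fine.
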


\begin{proof}
Let $[J]\in I_e^j$. In Proposition~\ref{prop:dim-at-free-points}, we compute the dimension \(\dim T_{[J]}Z_j\) and, after substituting $j = d(d-e)+e^2$, the computed formula agrees with the dimensional formula from Proposition~\ref{prop:free-strata-smooth}. Hence \(\dim T_{[J]}Z_j = \dim I_e^j\).

Since $I_e^j$ is locally closed in $Z_j$, there exists an open
neighborhood $U\subset Z_j$ of $[J]$ such that
$U\cap I_e^j$ is closed in $U$. Put $A\doteq \mathcal O_{Z_j,[J]}$ and 
$B\doteq \mathcal O_{I_e^j,[J]}$. The corresponding closed immersion induces a surjection \(A\twoheadrightarrow B\). Hence \(\dim A\geq \dim B=\dim I_e^j\).

On the other hand, for a Noetherian local $k$-algebra of finite type,
\[ 
\dim A \leq \dim_k\mathfrak m_A/\mathfrak m_A^2
=
\dim T_{[J]}Z_j.
\]
Since \(\dim T_{[J]}Z_j=\dim I_e^j\), we obtain
\( \dim A = \dim I_e^j = \dim T_{[J]}Z_j.
\)
Thus $A$ is a regular local ring. In particular, $Z_j$ is smooth at
$[J]$.

We claim moreover that the surjection $A\twoheadrightarrow B$ is an
isomorphism. Let
\[
K=\ker(A\lar B).
\]
If $K\neq0$, choose $0\neq f\in K$. Since the regular local ring $A$
is a domain, $f$ is a non-zero-divisor, and therefore \(\dim A/(f)=\dim A-1\). As $B=A/K$ is a quotient of $A/(f)$, this would give
\(\dim B\leq \dim A-1\), contradicting $\dim B=\dim A$. Hence $K=0$, and therefore \(A\simeq B\).

It follows in fact that \(I_e^j\hookrightarrow Z_j\) is an open immersion. Indeed, locally on $U$ the closed subscheme
$U\cap I_e^j$ is defined by a coherent ideal sheaf $\mathcal K$.
The isomorphism of local rings above gives \(\mathcal K_{[J]}=0\). Hence $\mathcal K$ vanishes on an open neighborhood of $[J]$, so
$I_e^j$ contains an open neighborhood of each of its points in
$Z_j$. Consequently $I_e^j$ is open in $Z_j$.

By Proposition~\ref{prop:free-strata-smooth}, $I_e^j$ is
irreducible, and hence its closure \(\overline{I_e^j}\) is irreducible. Choose $[J]\in I_e^j$. Since $Z_j$ is smooth at
$[J]$, there is a unique irreducible component $C$ of $Z_j$ passing
through $[J]$. Moreover,
\[
\dim C
=
\dim\mathcal O_{Z_j,[J]}
=
\dim I_e^j.
\]
Since $\overline{I_e^j}$ is irreducible and contains $[J]$, it is
contained in $C$. But
\[
\dim\overline{I_e^j}
=
\dim I_e^j
=
\dim C.
\]
A proper closed subset of the irreducible variety $C$ has strictly
smaller dimension, so \(\overline{I_e^j}=C\). Thus $\overline{I_e^j}$ is an irreducible component of $Z_j$, and
$I_e^j$ is a smooth dense open subset of this component.
\end{proof}

Although the free stratum $I_e^j$ is smooth, hence reduced, it need not be closed in $Z_j$. The following example exhibits this phenomenon explicitly.

\begin{Example}\label{ex:1-par-family-free-degeneration}
There exists a pointed integral curve $(B,0)$ and a morphism
\[
\gamma:B\longrightarrow Z_{27}
\]
such that
\[
\gamma(t)\in I_3^{27}
\quad(t\neq0),
\qquad
\gamma(0)\in I_2^{27}.
\]
In particular \(\gamma(0)\in
\overline{I_3^{27}}\setminus I_3^{27}\).

Fix $p\in\PP^2$ and let $F$ be a nontrivial locally free extension
\begin{equation}\label{eq:special-nonsplit-family}
0
\longrightarrow
\mathcal O_{\PP^2}(1)
\longrightarrow
F
\longrightarrow
\mathcal I_p(-1)
\longrightarrow0.
\end{equation}
Then \(c_1(F)=c_2(F)=0\) and \(F\not\simeq\mathcal O_{\PP^2}^{\oplus2}\). By the deformation results of
\cite[Remark~4.6]{Stromme1983} and
\cite[Proposition~4]{Beauville2018}, there exists, after shrinking
$B$, a vector bundle $\mathcal F$ on $\PP^2\times B$ such that
\[
\mathcal F_t\simeq\mathcal O_{\PP^2}^{\oplus2}
\quad(t\neq0),
\qquad
\mathcal F_0\simeq F.
\]

Set \(\mathscr F
\doteq
\mathcal F^\vee(-3)\). Then \(\mathscr F_t
\simeq
\mathcal O_{\PP^2}(-3)^{\oplus2}\) for \(t\neq0\), whereas dualizing and twisting
\eqref{eq:special-nonsplit-family} gives
\[
0
\longrightarrow
\mathcal O_{\PP^2}(-2)
\longrightarrow
\mathscr F_0
\longrightarrow
\mathcal I_p(-4)
\longrightarrow0.
\]
Hence
\[
\indeg(\mathscr F_t)=3
\quad(t\neq0),
\qquad
\indeg(\mathscr F_0)=2.
\]

The bundle $F(3)$ is globally generated and \(h^1(F(3))=0\). The same vanishing holds on the nearby trivial fibers. Therefore,
after shrinking $B$, cohomology and base change gives a vector bundle \(\mathcal H=q_*\mathcal F(3)\) whose fiber at $t$ is $H^0(\mathcal F_t(3))$. Choose three relative
sections whose specializations at $0$ are general. After shrinking
$B$ once more, their evaluation morphism
\[
\ev:
\mathcal O_{\PP^2\times B}^{\oplus3}
\longrightarrow
\mathcal F(3)
\]
has rank two away from a finite subscheme on every fiber.

Dualizing gives
\[
\Psi:
\mathscr F
\longrightarrow
\mathcal O_{\PP^2\times B}^{\oplus3}.
\]
For every $t$, this is injective with torsion-free rank-one cokernel,
hence
\[
0
\longrightarrow
\mathscr F_t
\longrightarrow
\mathcal O_{\PP^2}^{\oplus3}
\longrightarrow
\mathcal I_{Z_t}(6)
\longrightarrow0
\]
for some zero-dimensional $Z_t\subset\PP^2$. Since \(c_1(\mathscr F_t)=-6\) and \(
c_2(\mathscr F_t)=9\), the Chern class formula gives \(\length(Z_t)=27\). By Lemma~\ref{lem:flatness-finite-cokernel}, the cokernel of $\Psi$
is flat over $B$, and hence the family defines
\[
\gamma:B\longrightarrow Z_{27}.
\]
The displayed splitting types give
\[
\gamma(t)\in I_3^{27}
\quad(t\neq0),
\qquad
\gamma(0)\in I_2^{27},
\]
as claimed.
\end{Example}

\section{Gradient triples and integrable gaps}\label{sec:gradient-gaps}

This section investigates how the gradient locus
\(
G_d\subset\PP(W_d)
\)
(see Proposition~\ref{prop:parameter-space-gradient}) intersects the strata $Z_j$ and $I_e^j$, with particular emphasis on
the free case.

Since $G_d$ is a linear subspace, it is smooth of dimension
\[
\binom{d+3}{2}-1.
\]
Let \(p\in G_d\cap Z_j\) be an unobstructed point. The local dimension inequality gives
\begin{align*}
\dim_p(G_d\cap Z_j)
&\geq
\dim_p Z_j+\dim G_d-\dim\PP(W_d)\\
&=
\left(
3\binom{d+2}{2}-j-1
\right)
+
\left(
\binom{d+3}{2}-1
\right)
-
\left(
3\binom{d+2}{2}-1
\right)\\
&=
\binom{d+3}{2}-j-1.
\end{align*}
At a transverse point, equality holds.

In the free case, putting together the dimension formulas, we obtain lower bounds for the dimension of free gradient strata and free arrangement strata, in the next result.

\begin{Proposition}\label{prop:dimension-bound-free-families}
Let
\[
1\leq e\leq\left\lfloor\frac d2\right\rfloor,
\qquad
j=d(d-e)+e^2.
\]
Then:

\begin{itemize}
\item[(a)] Every irreducible component of \(I_e^j\cap G_d\) has dimension at least
\[
\begin{cases}
e^2-de+d+3e+3, & e<d/2,\\[1mm]
-\dfrac14d^2+\dfrac52d+2, & e=d/2.
\end{cases}
\]

\item[(b)] Every irreducible component of \(
I_e^j\cap A_d\) has dimension at least
\[
\begin{cases}
e^2-de-\dfrac12d^2+\dfrac12d+3e+3,
& e<d/2,\\[1mm]
-\dfrac34d^2+2d+2,
& e=d/2.
\end{cases}
\]
\end{itemize}

At every transverse point, the local dimension of the corresponding intersection is equal to the displayed lower bound. Consequently, every irreducible component containing a transverse point has exactly this dimension.

Both intersections are nonempty. In the balanced case $e=d/2$, the
gradient intersection is nowhere transverse for every even
$d\geq12$, while the arrangement intersection is nowhere transverse
for every even $d\geq4$.
\end{Proposition}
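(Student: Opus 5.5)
The plan is to treat both parts as excess-intersection statements inside the projective space $\PP(W_d)$. By Proposition~\ref{prop:free-strata-smooth}, the free stratum $I_e^j$ is a smooth irreducible quasi-projective variety of known dimension $D_e$. By Proposition~\ref{prop:parameter-space-gradient}, $G_d$ is a linear subspace of dimension $\binom{d+3}{2}-1$ and $A_d$ is irreducible of dimension $2d+2$. In a projective space $\PP^N$, any two irreducible locally closed subvarieties $X,Y$ satisfy
\[
\dim C\geq \dim X+\dim Y-N
\]
for every irreducible component $C$ of $X\cap Y$. This is the affine intersection theorem applied to the diagonal, or equivalently Serre's intersection inequality in a smooth ambient space. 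I would apply it with $N=3\binom{d+2}{2}-1=\tfrac12(3d^2+9d+4)$.

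For (a), with $\dim G_d=\tfrac12(d^2+5d+4)$, the expected dimension is $D_e-d^2-2d$. Substituting $D_e=d^2-de+e^2+3d+3e+3$ gives $e^2-de+d+3e+3$, and substituting $D_e=\tfrac34d^2+\tfrac92d+2$ gives $-\tfrac14d^2+\tfrac52d+2$. For (b), the expected dimension is $D_e+2d+2-\tfrac12(3d^2+9d+4)$, which gives the two displayed values in the same way. These are routine substitutions.

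For the transversality statement, at a point $p$ where the intersection is transverse (in the case of $A_d$, this includes $p$ being a smooth point of $A_d$), we have $T_p(X\cap Y)\subseteq T_pX\cap T_pY$. Transversality means $T_pX+T_pY=T_p\PP(W_d)$, so $\dim(T_pX\cap T_pY)$ equals the expected dimension. Since local dimension is at most tangent dimension, combining this with the lower bound forces equality of the local dimension. Every component through such a point therefore has exactly the expected dimension.

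The main obstacle is nonemptiness for every $1\leq e\leq\lfloor d/2\rfloor$. I would construct an explicit free line arrangement with exponents $(e,d-e)$. Take the grid arrangement
\[
F=z\prod_{i=1}^{e}(x-\alpha_iz)\prod_{k=1}^{d-e}(y-\beta_kz)
\]
with distinct $\alpha_i$ and distinct $\beta_k$; it has $d+1$ reduced lines. Consider the derivations $\theta_1=\prod_i(x-\alpha_iz)\,\partial_x$ and $\theta_2=\prod_k(y-\beta_kz)\,\partial_y$, which are logarithmic along $F$. Together with the Euler derivation, their coefficient determinant is a nonzero constant multiple of $F$. Saito's criterion then gives freeness with exponents $(e,d-e)$, so $\syz(J_F)\simeq R(-e)\oplus R(e-d)$ and $[\nabla F]\in I_e^j\cap A_d\subseteq I_e^j\cap G_d$. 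I would double-check the convention translating logarithmic derivations modulo Euler into syzygies of $J_F$, and check that $F$ is not a cone, so that $[\nabla F]\in U_d$.

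Finally, I would derive non-transversality from the sign of the expected dimension. A transverse point would make the local dimension equal to the expected dimension, which must be $\geq0$. For the gradient case with $e=d/2$, the bound $-\tfrac14d^2+\tfrac52d+2$ is negative exactly when $d^2-10d-8>0$, that is, for $d\geq11$, hence for all even $d\geq12$. For the arrangement case, $-\tfrac34d^2+2d+2$ is negative exactly when $3d^2-8d-8>0$, which holds for all $d\geq4$. Since both intersections are nonempty, no point of either intersection can be transverse in these ranges.
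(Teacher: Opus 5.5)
Your proposal is correct. For the dimension bounds, the transversality statement and the non-transversality conclusions, it follows the paper's proof. The paper likewise applies the intersection inequality in the smooth ambient space $\PP(W_d)$, localizing at the generic point of a component inside an open set where $I_e^j$ is closed. It makes the same substitutions, uses the same tangent-space count at a transverse point, and relies on the same sign analysis of $-\tfrac14 d^2+\tfrac52 d+2$ and $-\tfrac34 d^2+2d+2$.

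The one genuine difference is the nonemptiness step. The paper invokes Dimca's theorem on exponents of free line arrangements to obtain a free arrangement of $d+1$ lines with exponents $(e,d-e)$. You instead exhibit one explicitly: the grid (supersolvable) arrangement $z\prod_i(x-\alpha_i z)\prod_k(y-\beta_k z)$, whose freeness you check with Saito's criterion. The coefficient determinant of $\theta_E,\theta_1,\theta_2$ is exactly $zPQ=F$, and $D_0\simeq D/R\theta_E$ then identifies with $\syz(J_F)\simeq R(-e)\oplus R(e-d)$.

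The points you flagged for double-checking are fine. Since $e\geq1$ and $d-e\geq1$, there is no degree-zero syzygy, so $F$ is not a cone. Reducedness of $F$ forces $\gcd(\partial_xF,\partial_yF,\partial_zF)=1$, so $[\nabla F]\in U_d$.

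Your route makes the proof self-contained and produces explicit points of $I_e^j\cap A_d\subseteq I_e^j\cap G_d$ for every $e$. The paper's citation is shorter and places the example inside a broader existence result, but only existence is needed here.

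Your remark that transversality with $A_d$ should include smoothness of $A_d$ at the point is a small precision that the paper's proof leaves implicit.
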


\begin{proof}
Set \(j_e=d(d-e)+e^2\) and write \(
\mathbb P=\mathbb P(W_d)\). Then
\[
\dim \mathbb P
=
3\binom{d+2}{2}-1.
\]
Let $C$ be an irreducible component of
$I_e^{j_e}\cap G_d$, and let $\eta$ be its generic point.
Since $I_e^{j_e}$ is locally closed in $\mathbb P$, there exists an
open neighborhood $\Omega\subseteq\mathbb P$ of $\eta$ such that
$I_e^{j_e}\cap\Omega$ is closed in $\Omega$. Applying the standard
dimension inequality for intersections in the smooth ambient variety
$\Omega$ (see \cite[I, Theorem~7.2]{hartshorne2013algebraic}) and
localizing at $\eta$, we obtain
\[
\dim C
\geq
\dim I_e^{j_e}
+
\dim G_d
-
\dim\mathbb P.
\]
The same argument applies with $A_d$ in place of $G_d$.

By Proposition~\ref{prop:free-strata-smooth}, the free stratum is
smooth and has dimension
\[
\dim I_e^{j_e}
=
\begin{cases}
d^2-de+e^2+3d+3e+3,
& e<d/2,\\[1mm]
\displaystyle
\frac34d^2+\frac92d+2,
& e=d/2.
\end{cases}
\]

On the other hand, Proposition~\ref{prop:parameter-space-gradient}
gives \(\dim G_d=\binom{d+3}{2}-1\). Therefore, if $e<d/2$,
\begin{align*}
\dim C
&\geq
d^2-de+e^2+3d+3e+3-d(d+2)\\
&=
e^2-de+d+3e+3,
\end{align*}
while in the balanced case $e=d/2$,
\begin{align*}
\dim C
&\geq
\frac34d^2+\frac92d+2-d(d+2)\\
&=
-\frac14d^2+\frac52d+2.
\end{align*}
This proves~(a).

For the arrangement locus, we have \(\dim A_d=2(d+1)\). Thus, for every irreducible component $C'$ of
$I_e^{j_e}\cap A_d$,
\[
\dim C'
\geq
\dim I_e^{j_e}
+
2(d+1)
-
\left(3\binom{d+2}{2}-1\right).
\]
If $e<d/2$, this becomes
\begin{align*}
\dim C'
&\geq
d^2-de+e^2+3d+3e+3
+2d+2
-\left(3\binom{d+2}{2}-1\right)\\
&=
e^2-de-\frac12d^2+\frac12d+3e+3.
\end{align*}
If $e=d/2$, we obtain
\begin{align*}
\dim C'
&\geq
\frac34d^2+\frac92d+2
+2d+2
-\left(3\binom{d+2}{2}-1\right)\\
&=
-\frac34d^2+2d+2.
\end{align*}
This proves~(b).

At a point at which the corresponding intersection is transverse,
the tangent spaces span the tangent space of the ambient projective
space. Consequently the local dimension of the intersection is the
expected one
\[
\dim I_e^{j_e}+\dim X-\dim\mathbb P,
\qquad
X=G_d\ \text{or}\ A_d.
\]
In particular, any irreducible component containing a transverse
point has dimension equal to the corresponding lower bound above.

Finally, the intersections under consideration are nonempty.
Indeed, by
\cite[Theorems~1.1 and~1.2]{dimca2015exponents}, for every
\[
1\leq e\leq\left\lfloor\frac d2\right\rfloor
\]
there exist free line arrangements of degree $d+1$ with exponents
$(e,d-e)$. Hence
\[
I_e^{j_e}\cap A_d\neq\emptyset \text{ and }I_e^{j_e}\cap G_d\neq\emptyset.
\]

It remains to consider the balanced case $e=d/2$. For the gradient
locus, the expected dimension is
\[
-\frac14d^2+\frac52d+2
=
\frac{-d^2+10d+8}{4}.
\]
The roots of the quadratic polynomial above are \( d=5\pm\sqrt{33}\), so the formula gives a negative value for every even $d\geq12$. Since the intersection is nevertheless nonempty, it cannot be
transverse at any of its points.

Similarly, for the arrangement locus the expected dimension is
\[
-\frac34d^2+2d+2
=
\frac{-3d^2+8d+8}{4},
\]
whose roots are \(
d=\frac{4\pm2\sqrt{10}}{3}\). Thus it is negative for every even $d\geq4$, and hence
$I_{d/2}^{j_e}\cap A_d$ is nowhere transverse in these degrees.
\end{proof}

\begin{Remark}\label{rmk:known-free-families-of-divisors}
Let us see how our framework (especially Proposition~\ref{prop:free-strata-smooth} and Corollary~\ref{cor:free-points-picture}) relates to the following known families of free divisors on $\mathbb{P}^2$:
\begin{itemize}
    \item[(a)] The \emph{Cayley sextics} (studied in \cite{simis2006depth}):
    $$
    f_{a,b}(x,y,z) = 4(x^2 + b y^2 + a xz)^3 - 27 a^2 (x^2 + b y^2)^2 z^2
    $$
    where $(a,b) \in \mathbb{A}^2_k$ such that $ab \neq 0$. For each $f = f_{a,b}$ gradient ideal, the syzygy sheaf is $\T_{J_f} \simeq \mathcal{O}_{\mathbb{P}^2}(-2) \oplus \mathcal{O}_{\mathbb{P}^2}(-3)$.

    Here $d=5$, $e=2$, and \(j=d(d-e)+e^2=19\). By Corollary~\ref{cor:free-points-picture}, the stratum $I_2^{19}$ is smooth and $Z_{19}$ is smooth along it. Moreover,
    \[
    \dim I_2^{19}=43.
    \]
    The intersection \(G_5\cap I_2^{19}\) has dimension at least $8$, whereas the family $f_{a,b}$ is only two-dimensional, so it does not fill an irreducible component of the intersection.
    
    \item[(b)] The \emph{Simis-Tohaneanu} family (see \cite[Proposition 2.2]{simis2014homology}): for $D = d+1 \geq 5$, the polynomials
    $$
    g_{a,b,c,t}(x,y,z) = y^{D-1}z + a x^D+ b x^2 y^{D-2} + c x y^{D-1} + t y^D
    $$
    for $a b \neq 0$. For each $g = g_{a,b,c,t}$, the syzygy sheaf is $\T_{J_g} \simeq \mathcal{O}_{\mathbb{P}^2}(-2) \oplus \mathcal{O}_{\mathbb{P}^2}(2-d)$. Hence, one has $e = 2$, so $j = (D-1)^2 - 2(D-3)$. 
    
    For $d = 4$, $e = 2 = d/2$ and $j = 12$. The whole parameter space for $d = 4$ is $\mathbb{P}(W_4) \simeq \mathbb{P}^{44}$. The stratum $I_2^{12}$ has dimension $32$, and the gradient locus $G_4 \cap I_2^{12}$ has dimension at least $8$, so this family also does not fill up a component of the intersection. 
    
    \item[(c)] The \emph{Nanduri} family (see \cite[Theorem 2.3]{nanduri2015family}): for $D = d+1 \geq 5$ integer and $v = \lfloor \frac{D}{2} \rfloor$, the polynomials 
    $$
    h_{F_1, F_2}(x,y,z) = x^{D-\alpha}F_1(x,y) + y^{v+\alpha+1} F_2(x,y) + x^\beta y^{D-\beta-1} z
    $$
    with $\alpha, \beta \geq 0$ and $0 \leq \alpha + \beta \leq \lfloor \frac{D+1}{2} \rfloor - 3$, $F_1(x,y)$ is square-free, homogeneous of degree $\alpha$ with $x \nmid F_1$ and $y \nmid F_1$, and $F_2(x,y)$ is homogeneous of degree $D-v-\alpha-1$ with $x \nmid F_2$ and $y \nmid F_2$. For each $h = h_{F_1, F_2}$, the syzygy sheaf is 
    $$
    \T_{J_h} \simeq \mathcal{O}_{\mathbb{P}^2}(-v)\oplus \mathcal{O}_{\mathbb{P}^2}(v-d),
    $$
    hence $e =  \min\{v, d-v\} = \lfloor d/2 \rfloor$. In particular, the initial degree is independent of the parameters
$\alpha$ and $\beta$.
\end{itemize}

Let us compare the dimension of the Nanduri family with the
dimension of the corresponding free stratum.  For fixed $(\alpha,\beta)$, the parameters are precisely the
coefficients of $F_1$ and $F_2$. The two corresponding summands
\[
x^{D-\alpha}F_1(x,y)
\qquad\text{and}\qquad
y^{v+\alpha+1}F_2(x,y)
\]
have disjoint monomial supports, while the coefficient of
\[
x^\beta y^{D-\beta-1}z
\]
has been normalized to be $1$. Hence two choices of $(F_1,F_2)$ give
the same projective polynomial only when they coincide. The
square-freeness and non-divisibility are open conditions, so
the Nanduri family for fixed $(\alpha,\beta)$ has dimension
\[
\begin{aligned}
N_{\alpha,\beta}
&=(\alpha+1)+(D-v-\alpha)\\
&=D-v+1\\
&=
D-\left\lfloor\frac D2\right\rfloor+1.
\end{aligned}
\]
In particular, for fixed $D$ this dimension is independent of
$(\alpha,\beta)$.

Since every member of the Nanduri family is free, it belongs to $
I_e^{d(d-e)+e^2}$ for $e=\lfloor\frac d2\rfloor$. By Corollary~\ref{cor:free-points-picture}, every such point is a
smooth point of the corresponding stratum $Z_{d(d-e)+e^2}$.

Notice that $N_{\alpha,\beta}$ grows linearly with $D$, whereas both
the free stratum $I_e^j$ and the gradient locus $G_d$ have quadratic
dimension in $D$. For orientation, the first cases are:
\[
\begin{array}{c|c|c|c|c}
D & (d,e,j)
& \dim I_e^j
& \dim_{\mathrm{exp}}(G_d\cap I_e^j)
& N_{\alpha,\beta}
\\ \hline
5 & (4,2,12) & 32 & 8 & 4\\
6 & (5,2,19) & 43 & 8 & 4\\
7 & (6,3,27) & 56 & 8 & 5
\end{array}
\]
For $D=5,6$ one necessarily has $\alpha+\beta=0$, whereas for
$D=7$ the possibilities $\alpha+\beta=0,1$ yield the same splitting
type and hence lie in the same free stratum.

More generally, let first \(D=2m+1\). Then
\(
d=2m\), \(e=m\), and \(N_{\alpha,\beta}=m+2\), while Proposition~\ref{prop:dimension-bound-free-families} gives
\[
\dim_{\mathrm{exp}}(G_d\cap I_e^j)
=
-m^2+5m+2.
\]
Therefore \(N_{\alpha,\beta}
-
\dim_{\mathrm{exp}}(G_d\cap I_e^j)
=
m(m-4)\).

If instead \(D=2m\), then \(
d=2m-1\), \(e=m-1\) and \(N_{\alpha,\beta}=m+1\), while
\[
\dim_{\mathrm{exp}}(G_d\cap I_e^j)
=
-m^2+6m-1.
\]
Hence \(N_{\alpha,\beta}
-
\dim_{\mathrm{exp}}(G_d\cap I_e^j)
=
m^2-5m+2\).

Consequently, for every even $D\geq10$ and every odd $D\geq11$, the Nanduri family has dimension strictly larger than the expected
dimension of $G_d\cap I_e^j$. Every irreducible component containing it is therefore non-transverse.
\end{Remark}

In this final remark, we give a geometric interpretation of \cite[Question 2.7]{JMNRS2026}.

\begin{Remark}\label{rmk:gaps-integrable-geometry}
For fixed $d$, the absence of integrable gaps is equivalent to
\[
I_e^j\neq\emptyset
\quad\Longrightarrow\quad
I_e^j\cap G_d\neq\emptyset.
\]

For the free strata this implication always holds. Indeed, by
\cite[Theorem~1.2]{dimca2015exponents}, for every
$1\le e\le\lfloor d/2\rfloor$ there exists a free arrangement of
$d+1$ lines with exponents $(e,d-e)$. Hence
\[
I_e^{d(d-e)+e^2}\cap A_d\neq\emptyset,
\]
and therefore also
\(
I_e^{d(d-e)+e^2}\cap G_d\neq\emptyset.
\)

The question of integrable gaps is therefore genuinely relevant for
the non-free strata. Even when $G_d\cap Z_j$ is nonempty, it may
happen that this intersection is contained entirely in the union of
some of the locally closed strata $I_e^j$, while missing others.

For instance, suppose that $Z_j$ is irreducible and let $e_0$ be its
generic initial degree. Then $I_{e_0}^j$ contains a dense open subset
of $Z_j$. Nevertheless, it is still possible a priori that \( G_d\cap I_{e_0}^j=\emptyset\), since the closed linear space $G_d$ could meet $Z_j$ entirely inside
the proper closed subset \(
Z_j\setminus I_{e_0}^j\). Thus density of $I_{e_0}^j$ in $Z_j$ alone does not rule out an
integrable gap.
\end{Remark}

\section{Syzygy bundles, semistability, and refined du Plessis--Wall bounds}\label{sec:semistable-syzygies}

The goal of this section is to study the syzygy bundle \(\T_J\) associated with a triple $[J]\in I_e^j$ from the point of view of
stability on $\PP^2$. We first use slope semistability directly,
without invoking a moduli space, to obtain a stronger numerical upper
bound for $j$ in the semistable range. Remarkably, this bound agrees,
for gradient triples, with the refined du Plessis--Wall bound.

We then consider the usual Gieseker moduli space of sheaves. In the
strictly slope-stable range $e>d/2$, the bundles $\T_J$ are also
Gieseker stable, and the universal syzygy family therefore induces a
natural morphism from $I_e^j$ to the corresponding moduli space. The
balanced case $e=d/2$ requires separate treatment, since slope
semistability does not in general imply Gieseker semistability.

\begin{Lemma}\label{lem:mu-semistability}
Let $d\geq1$, $1\leq j\leq d^2$, and let
$[J]\in I_e^j$. Then
\[
\T_J \text{ is $\mu$-semistable}
\iff
e\geq\left\lceil\frac d2\right\rceil,
\]
and
\[
\T_J \text{ is $\mu$-stable}
\iff
e>\frac d2.
\]
\end{Lemma}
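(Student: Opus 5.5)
The plan is to use the standard criterion for rank-two bundles on $\PP^2$ (Picard group $\mathbb Z$): a rank-two reflexive sheaf $\mathcal E$ is $\mu$-semistable (resp.\ $\mu$-stable) if and only if $H^0(\mathcal E_{\mathrm{norm}}(-1))=0$ (resp.\ $H^0(\mathcal E_{\mathrm{norm}})=0$), where $\mathcal E_{\mathrm{norm}}$ is the twist with $c_1\in\{0,-1\}$. Rather than quoting this, it is cleaner to argue directly. A destabilizing subsheaf of a rank-two bundle may be taken saturated of rank one, hence reflexive of rank one on a smooth surface, hence a line bundle $\mathcal O_{\PP^2}(-a)\hookrightarrow\T_J$. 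Such injections correspond to nonzero sections of $\T_J(a)$, i.e. to elements of $\syz(J)_a$. Thus the maximal slope of a rank-one subsheaf of $\T_J$ is exactly $-e$, with $e=\indeg(\syz(J))$, and it is realized by a minimal syzygy $\nu$, which is injective by the argument in Lemma~\ref{lem:universal-Bourbaki-construction}.

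Next, compare this with $\mu(\T_J)=c_1/\mathrm{rk}=-d/2$, using $\det\T_J\simeq\mathcal O_{\PP^2}(-d)$ from the proof of Lemma~\ref{lem:obstruction-cohomology}. Semistability means every rank-one subsheaf $\mathcal O(-a)$ satisfies $-a\le -d/2$; by the first step this is equivalent to $-e\le -d/2$, i.e. $e\ge d/2$, which for an integer $e$ is $e\ge\lceil d/2\rceil$. Stability requires strict inequality for all rank-one subsheaves, giving $e>d/2$. Both implications are immediate once the maximal destabilizing slope is identified with $-e$: if $e<d/2$ the syzygy $\nu$ itself destabilizes, and if $e=d/2$ it violates strict stability.

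The only point needing care is the reduction to line subbundles: a subsheaf $\mathcal F\subset\T_J$ of rank one has saturation $\mathcal F'$ with $\mu(\mathcal F')\ge\mu(\mathcal F)$, and $\mathcal F'$ is reflexive because $\T_J/\mathcal F'$ is torsion-free. On $\PP^2$ it is therefore $\mathcal O(-a)$. Rank-two subsheaves have the same rank as $\T_J$ and $c_1$ at most $c_1(\T_J)$, so they do not matter for the slope condition; some conventions exclude them altogether. Hypotheses such as $j\ge1$ are not needed beyond ensuring that $J$ lies in $U_d$. I expect no real obstacle, only bookkeeping with this saturation argument.
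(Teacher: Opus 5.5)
Your proposal is correct and follows essentially the same route as the paper. Both identify saturated rank-one subsheaves of $\T_J$ with line bundles $\mathcal O_{\PP^2}(-l)$ given by nonzero sections of $\T_J(l)$, so the maximal slope of such a subsheaf is $-e$, and then compare this with $\mu(\T_J)=-d/2$; your remarks on reflexivity of the saturation and on rank-two subsheaves just make explicit details the paper leaves implicit.
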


\begin{proof}
Since \(c_1(\T_J)=-d\), we have \(\mu(\T_J)=-d/2\). A nonzero section of $\T_J(l)$ induces a rank-one subsheaf
\[
\mathcal O_{\PP^2}(-l)\hookrightarrow\T_J.
\]
Conversely, after saturation, any rank-one destabilizing subsheaf
produces such a section. Hence $\T_J$ is $\mu$-semistable precisely
when \(H^0(\T_J(l))=0\) for every \( l < \frac d2\), and it is $\mu$-stable precisely when
\(
H^0(\T_J(l))=0\) for every \(l\leq d/2\).
Since \( e=\indeg(\T_J)=
\min\{l:H^0(\T_J(l))\neq0\}
\), the assertions follow.
\end{proof}

\begin{Proposition}[Semistability and the refined upper bound]
\label{prop:semistable-refined-bound}
Let $[J]\in I_e^j$ and suppose \(
e\geq \lceil d/2\rceil\). Then
\begin{equation}\label{eq:refined-upper-general-triples}
j
\leq
\frac{d(d+1)}2+(d-1)e-e^2.
\end{equation}
Equivalently,
\begin{equation}\label{eq:refined-upper-dpw-form}
j
\leq
d(d-e)+e^2
-
\binom{2e-d+1}{2}.
\end{equation}

Consequently, in the semistable range the possible values of $j$ are
contained in
\begin{equation}\label{eq:semistable-refined-interval}
I^{\mathrm{ss}}_{d,e}
\doteq
\left[
d(d-e),\,
d(d-e)+e^2-\binom{2e-d+1}{2}
\right].
\end{equation}
\end{Proposition}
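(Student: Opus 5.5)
The plan is to apply Riemann--Roch to the twist $\T_J(e-1)$, as indicated in Theorem~C. By definition of $e=\indeg(\syz(J))$ we have $H^0(\T_J(e-1))=0$. For $H^2$, Serre duality gives $H^2(\T_J(e-1))\simeq H^0(\T_J^\vee(-e-2))^*$. Since $\T_J$ has rank two with $\det\T_J\simeq\mathcal O_{\PP^2}(-d)$, we have $\T_J^\vee\simeq\T_J(d)$, so this group is $H^0(\T_J(d-e-2))^*$. In the semistable range $e\geq\lceil d/2\rceil$ we get $d-e-2\leq e-2<e$, hence $H^0(\T_J(d-e-2))=0$ by minimality of $e$. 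One can alternatively phrase this via Lemma~\ref{lem:mu-semistability}: a nonzero section of $\T_J(d-e-2)$ would produce a subsheaf $\mathcal O(e+2-d)\hookrightarrow\T_J$ of slope $e+2-d>-d/2$. Either way, both vanishings hold, so
\[
\chi(\T_J(e-1))=-h^1(\T_J(e-1))\leq 0.
\]

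Next compute $\chi(\T_J(l))$ using the Hilbert polynomial already obtained in the proof of Theorem~\ref{thm:strata}:
\[
\chi(\T_J(l))=3\binom{l+2}{2}-\binom{l+d+2}{2}+j.
\]
Setting $l=e-1$, the inequality $\chi\leq0$ becomes
\[
j\leq\binom{e+d+1}{2}-3\binom{e+1}{2}.
\]
Expanding gives $\tfrac12\bigl[(e+d+1)(e+d)-3e(e+1)\bigr]=\tfrac{d(d+1)}2+(d-1)e-e^2$, which is \eqref{eq:refined-upper-general-triples}. A second routine expansion shows that $d(d-e)+e^2-\binom{2e-d+1}{2}$ equals the same quantity: $\binom{2e-d+1}{2}=\tfrac12(2e-d+1)(2e-d)$, and subtracting this from $d^2-de+e^2$ simplifies to $\tfrac{d^2+d}{2}+de-e-e^2$. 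This yields \eqref{eq:refined-upper-dpw-form}.

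Finally, the lower end of the interval $I^{\mathrm{ss}}_{d,e}$ is the already known bound $j\geq d(d-e)$, equivalent to $\Bour(J)\leq e^2$, which follows from \cite[Theorem~2.1]{JMNRS2026} via \eqref{eq:Bourbaki-deg-formula}. The only delicate point is the $H^2$ vanishing. There one must check the inequality $d-e-2<e$ carefully in the boundary case $e=d/2$ (where $d-e-2=e-2$) and for odd $d$ with $e=(d+1)/2$ (where $d-e-2=e-3$). Both are fine, so I expect no real obstacle beyond the bookkeeping in the binomial expansion.
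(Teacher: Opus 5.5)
Your proposal is correct and is essentially the paper's proof: Riemann--Roch for $\T_J(e-1)$ combined with $h^0(\T_J(e-1))=h^2(\T_J(e-1))=0$, followed by the same binomial identity. The only difference is cosmetic: you get the $H^2$-vanishing from $\T_J^\vee\simeq\T_J(d)$ and minimality of $e$ (since $d-e-2<e$), whereas the paper argues that $\T_J^\vee(-e-2)$ is $\mu$-semistable of negative slope, so your version is slightly more elementary and does not actually need semistability.
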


\begin{proof}
Set \(E\doteq\T_J\). By Lemma~\ref{lem:mu-semistability}, $E$ is $\mu$-semistable. By the
definition of $e$,
\[
H^0(E(e-1))=0.
\]

We claim also that \(H^2(E(e-1))=0\). Indeed, by Serre duality,
\[
H^2(E(e-1))
\simeq
H^0(E^\vee(-e-2))^\vee.
\]
Since $E$ is $\mu$-semistable, so is
$E^\vee(-e-2)$, and
\[
\mu(E^\vee(-e-2))
=
\frac d2-e-2<0.
\]
A semistable sheaf of negative slope has no nonzero global sections,
and therefore
\[
0=H^0(E^\vee(-e-2)) = H^2(E(e-1))^\vee.
\]

It follows that \(\chi(E(e-1))=-h^1(E(e-1))\leq0\). Since
\[
\rank(E)=2,\qquad
c_1(E)=-d,\qquad
c_2(E)=d^2-j,
\]
Riemann--Roch on $\PP^2$ gives
\[
\chi(E(t))
=
t^2+(3-d)t
+
2-\frac{3d}{2}-\frac{d^2}{2}+j.
\]
Putting $t=e-1$, we obtain
\[
\chi(E(e-1))
=
j+e^2-(d-1)e-\frac{d(d+1)}2.
\]
Thus
\[
j
\leq
\frac{d(d+1)}2+(d-1)e-e^2,
\]
which proves~\eqref{eq:refined-upper-general-triples}. Finally,
\[
\frac{d(d+1)}2+(d-1)e-e^2
=
d(d-e)+e^2
-
\binom{2e-d+1}{2},
\]
giving~\eqref{eq:refined-upper-dpw-form}.
\end{proof}

\begin{Remark}[Relation with the refined du Plessis--Wall bound]
\label{rmk:refined-dpw-semistability}
For gradient triples in the range \(e> d/2\), the estimate~\eqref{eq:refined-upper-dpw-form} is precisely the
refined du Plessis--Wall upper bound
\cite[Theorem~3.2]{CTC-Plessis}. Indeed, for a reduced plane curve
of degree $D=d+1$, with
\[
e=\indeg(\syz(J_F))
\qquad\text{and}\qquad
j=\tau(F),
\]
their bound becomes
\[
j
\leq
d(d-e)+e^2-\binom{2e-d+1}{2}.
\]
When $d$ is even and $e=d/2$, the same inequality remains valid, but
\[
\binom{2e-d+1}{2}=0,
\]
so one recovers the ordinary du Plessis--Wall upper bound.

Thus Proposition~\ref{prop:semistable-refined-bound} gives a
vector-bundle interpretation of the correction term
\[
\binom{2e-d+1}{2}.
\]
For three-generated ideals the same numerical restriction
follows from slope semistability of the syzygy bundle together with
the vanishings 
\[
h^0(\T_J(e-1))=h^2(\T_J(e-1))=0.
\]

In this sense, the du Plessis--Wall correction term arises as a
cohomological consequence of semistability of the syzygy bundle.
\end{Remark}

\begin{Proposition}\label{prop:morphism-gieseker}
Let $d\geq1$, $j\geq0$, and suppose \(
e> d/2\). Let \(M^{\mathrm{Gs}}_{d,j}\) denote the moduli space of Gieseker-stable rank-two torsion-free sheaves $E$ on $\PP^2$ with $c_1(E) = -d$ and $c_2(E) = d^2-j$.

Then the universal syzygy bundle induces a natural morphism
\[
\Sigma:
I_e^j
\longrightarrow
M^{\mathrm{Gs}}_{d,j},
\qquad
[J]\longmapsto[\T_J].
\]

If $d$ is odd, this construction applies to the entire
$\mu$-semistable range
\[
e\geq\left\lceil\frac d2\right\rceil.
\]
\end{Proposition}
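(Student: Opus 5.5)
The plan is to produce a flat family of Gieseker-stable sheaves over $I_e^j$ and then invoke the coarse moduli property of $M^{\mathrm{Gs}}_{d,j}$. Restrict the universal syzygy sheaf $\T_S$ from Theorem~\ref{thm:strata} (over $S=Z_j$) along $I_e^j\hookrightarrow Z_j$. By Lemma~\ref{lem:compatibility-determinant} it is locally free of rank two and flat over $I_e^j$, and its formation commutes with base change. Hence its fiber at a geometric point $[J]$ is $\T_J$, with $c_1=-d$ and $c_2=d^2-j$, as computed from $\ch(\T_J)$ in Lemma~\ref{lem:dimension-Z-j-smooth-points}.

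Next I check stability fiberwise. By Lemma~\ref{lem:mu-semistability}, $e>d/2$ gives that every fiber $\T_J$ is $\mu$-stable. For torsion-free sheaves, $\mu$-stability implies Gieseker stability: a saturated rank-one subsheaf $\mathcal{O}(-l)\otimes\mathcal I_W\subset\T_J$ has $-l<-d/2$, so its reduced Hilbert polynomial is eventually strictly smaller, because the linear terms already differ. Thus $\T_S|_{I_e^j}$ is a flat family of Gieseker-stable sheaves with fixed Chern classes, and hence a fixed Hilbert polynomial. By the universal (coarse) property of the moduli space of Gieseker-semistable sheaves (Maruyama, Simpson, Huybrechts--Lehn Thm.~4.3.4), any flat family over a base $T$ induces a classifying morphism $T\to M$. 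The stable locus $M^{\mathrm{Gs}}_{d,j}$ is open, and the morphism sends $[J]\mapsto[\T_J]$. Naturality follows because the family commutes with base change.

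For $d$ odd, $\lceil d/2\rceil>d/2$, so $e\ge\lceil d/2\rceil$ already means $e>d/2$. The first part therefore covers the whole $\mu$-semistable range; equivalently, strictly semistable bundles cannot occur, since $c_1=-d$ is odd and no rank-one subsheaf can have slope $-d/2$.

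The main point needing care is the use of the scheme structure on $I_e^j$. This structure is the determinantal one, possibly nonreduced, so I would make sure the family is flat over it and not merely over its reduction. This holds because flatness was proved over $Z_j$ and is preserved by pullback along the locally closed immersion. Also, the moduli space is only coarse, so the morphism comes from the functorial map $\mathcal M(T)\to\Hom(T,M)$ rather than from representability; this suffices for the statement.
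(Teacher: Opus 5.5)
Your proposal is correct and follows the paper's proof. Both arguments restrict the universal syzygy bundle to $I_e^j$, where it is flat because it is flat over $Z_j$. Both then get fiberwise $\mu$-stability from Lemma~\ref{lem:mu-semistability}, pass from $\mu$-stability to Gieseker stability, invoke the coarse moduli property of $M^{\mathrm{Gs}}_{d,j}$, and handle the odd case via $\lceil d/2\rceil>d/2$. Your extra remarks on the possibly nonreduced determinantal scheme structure and on why $\mu$-stable implies Gieseker-stable spell out what the paper leaves implicit.
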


\begin{proof}
Let \(q:\PP^2\times I_e^j\lar I_e^j\) be the projection and let $\mathscr T$ be the universal syzygy
bundle. Since $I_e^j\subset Z_j$ and the universal quotient over
$Z_j$ is flat, $\mathscr T$ is flat over $I_e^j$. Its fibers have
fixed Chern classes \(c_1=-d\) and \(c_2=d^2-j\).

By Lemma~\ref{lem:mu-semistability}, the inequality $e>d/2$ implies that every fiber $\T_J$ is $\mu$-stable. Hence every fiber is Gieseker stable. Therefore $\mathscr T$ is a
flat family of stable sheaves with fixed Hilbert polynomial. By the
standard construction and universal property of the coarse moduli
space of Gieseker-semistable sheaves
(see, for example, \cite[Chapter~4]{huybrechts2010geometry}),
the family determines a morphism
\[
\Sigma:I_e^j\longrightarrow M^{\mathrm{Gs}}_{d,j}.
\]

If $d$ is odd, then \(e\geq \lceil d/2 \rceil\)
automatically implies $e>d/2$, proving the last assertion.
\end{proof}

\begin{Remark}[The balanced even case]
\label{rmk:balanced-gieseker}
Suppose \(
d=2m\) and \(e=m\). Then $\T_J$ is $\mu$-semistable but not necessarily Gieseker semistable. Indeed, set \(
F\doteq\T_J(m)\). Then
\[
c_1(F)=0 \text{ and }H^0(F)\neq0.
\]

The minimality of $e$ implies that a nonzero section of $F$ has no
divisorial zero. Hence it gives an exact sequence
\[
0
\longrightarrow
\mathcal O_{\PP^2}
\longrightarrow
F
\longrightarrow
\mathcal I_B
\longrightarrow0,
\]
where $B\subset\PP^2$ is zero-dimensional and
\[
\deg(B)
=
c_2(F)
=
\frac{3d^2}{4}-j.
\]

If $\deg(B)>0$, then
\[
P_F(t)
=
P_{\mathcal O_{\PP^2}}(t)
+
P_{\mathcal I_B}(t)
=
2P_{\mathcal O_{\PP^2}}(t)-\deg(B).
\]
Since $\rank(F)=2$, its reduced Hilbert polynomial is
\[
p_F(t)
=
\frac{P_F(t)}2
=
P_{\mathcal O_{\PP^2}}(t)
-\frac{\deg(B)}2.
\]
Thus
\(
p_{\mathcal O_{\PP^2}}(t)>p_F(t)\), so the subsheaf
\(\mathcal O_{\PP^2}\subset F\) Gieseker-destabilizes $F$.

At the extremal value \(
j=3d^2/4\), one has $B=\emptyset$, so
\[
0
\longrightarrow
\mathcal O_{\PP^2}
\longrightarrow
F
\longrightarrow
\mathcal O_{\PP^2}
\longrightarrow0.
\]
Since \(H^1(\mathcal O_{\PP^2})=0\),
the sequence splits, and therefore
\[
\T_J
\simeq
\mathcal O_{\PP^2}(-d/2)^{\oplus2}.
\]
Thus, among the balanced points with \(e=d/2\), the extremal value
\[
j=\frac{3d^2}{4}
\]
is the unique case in which $\T_J$ is Gieseker semistable.
\end{Remark}

\begin{Corollary}\label{cor:exceptional-sheaves}
Let $d\geq3$ and set
\[
j_{\max}(d)
=
\begin{cases}
\dfrac{3d^2}{4},
& d\ \mathrm{even},\\[2mm]
\dfrac{3d^2-3}{4},
& d\ \mathrm{odd}.
\end{cases}
\]
Suppose
\[
e\geq\left\lceil\frac d2\right\rceil
\qquad\text{and}\qquad
j=j_{\max}(d).
\]
Then:

\begin{itemize}
    \item[(a)] If $d$ is even, necessarily \(
    e=d/2\) and \(\T_J
    \simeq
    \mathcal O_{\PP^2}(-d/2)^{\oplus2}.
    \)

    \item[(b)] If $d$ is odd, necessarily \(
    e=(d+1)/2\), and the corresponding extremal stable bundle is \(
    \T_J
    \simeq
    T_{\PP^2}\left(-(d+3)/2\right).
    \)
\end{itemize}
\end{Corollary}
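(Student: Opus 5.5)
The plan is to use the refined bound of Proposition~\ref{prop:semistable-refined-bound}, viewed as a function of $e$, to pin down $e$. Then I identify $\T_J$ through its cohomology: in the even case via Remark~\ref{rmk:balanced-gieseker}, in the odd case via a Beilinson-type or cohomological characterization of the twisted tangent bundle.

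\emph{Step 1: $j_{\max}$ forces $e$.} The right-hand side of \eqref{eq:refined-upper-general-triples},
\[
g(e)=\tfrac{d(d+1)}{2}+(d-1)e-e^2,
\]
is a concave quadratic maximized at $e=(d-1)/2$. On the range $e\geq\lceil d/2\rceil$ it is therefore strictly decreasing. For $d=2m$ we have $g(m)=3m^2=3d^2/4$ and $g(m+1)<g(m)$, so $j=3d^2/4$ forces $e=m$. For $d=2m+1$ we have $g(m+1)=(2m+1)(m+1)+2m(m+1)-(m+1)^2=3m^2+3m=(3d^2-3)/4$, and larger $e$ give strictly smaller values, so $e=m+1=(d+1)/2$.

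\emph{Step 2 (even case).} With $e=d/2$ and $j=3d^2/4$, the computation in Remark~\ref{rmk:balanced-gieseker} gives $\deg B=3d^2/4-j=0$. The extension $0\to\mathcal O\to\T_J(m)\to\mathcal O\to0$ splits because $\Ext^1(\mathcal O,\mathcal O)=H^1(\mathcal O)=0$. Hence $\T_J\simeq\mathcal O(-d/2)^{\oplus2}$.

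\emph{Step 3 (odd case).} Set $F\doteq\T_J(m+1)$, where $d=2m+1$. Then $c_1(F)=1$ and $c_2(F)=c_2(\T_J)-(m+1)d+(m+1)^2$. Substituting $c_2(\T_J)=d^2-j$ gives $c_2(F)=(m+1)^2-3m^2-3m+\ldots$; I will check that this equals $1$, matching $c_2(T_{\PP^2}(-1))=1$ and $c_1=1$.

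Since $e=m+1$, we have $H^0(F(-1))=H^0(\T_J(m))=0$. Equality in the bound forces $h^1(\T_J(e-1))=-\chi=0$, so $H^1(F(-1))=0$. For $H^2(F(-1))$, Serre duality and stability (slope of the dual twist is negative) give vanishing exactly as in the proof of Proposition~\ref{prop:semistable-refined-bound}. In addition, $H^0(F(-2))=0$ and $H^2(F(-2))\simeq H^0(F^\vee(-1))^\vee=0$ by stability, since $F^\vee\simeq F(-1)$ has $c_1=-1$.

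I then want to conclude $F\simeq T_{\PP^2}(-1)$. The cleanest route is to take a nonzero section of $F$; it exists because $\chi(F)=3>0$ and $h^2(F)=0$. This section vanishes at a single point $p$, using $c_2(F)=1$ and the fact that it has no divisorial zero by minimality of $e$. This gives $0\to\mathcal O\to F\to\mathcal I_p(1)\to0$. Such a nonsplit extension is unique up to isomorphism, since $\Ext^1(\mathcal I_p(1),\mathcal O)\simeq H^1(\mathcal I_p(-2))^\vee$ is one-dimensional, and it is realized by the Euler sequence restricted to a point, namely $T_{\PP^2}(-1)$. Moving $p$ is harmless up to automorphisms; alternatively, $F$ is a stable bundle with $c_1=1$, $c_2=1$, and the moduli space of such bundles is a point. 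The extension is nonsplit because $F$ is locally free while $\mathcal O\oplus\mathcal I_p(1)$ is not.

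Therefore $\T_J\simeq T_{\PP^2}(-1)(-(m+1))=T_{\PP^2}(-(d+3)/2)$.

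The main obstacle is the Chern class bookkeeping in Step 3, and confirming that the section of $F$ vanishes in a reduced point (length $1$ automatically gives a reduced point). Both are routine once written out.
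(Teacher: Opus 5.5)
Your proposal is correct and follows essentially the same route as the paper. The refined bound of Proposition~\ref{prop:semistable-refined-bound} pins down $e$ (your monotonicity of $g(e)$ is the paper's substitution $e=m+s$), and the even case is read off from Remark~\ref{rmk:balanced-gieseker}; in the odd case, $F=\T_J(m+1)$ with $c_1(F)=c_2(F)=1$ is identified with $T_{\PP^2}(-1)$ through a section vanishing at one point and the one-dimensionality of $\Ext^1(\mathcal I_p(1),\mathcal O_{\PP^2})$. The extra vanishings of $H^1(F(-1))$ and $H^i(F(-2))$ are not needed, and the section of $F$ exists simply because $e=m+1$ is the initial degree.
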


\begin{proof}
Suppose first that $d=2m$. Write $e=m+s$ with $s\geq0$. By Proposition~\ref{prop:semistable-refined-bound}, \(j \leq 3m^2-s(s+1)\).
Since
\[
j=j_{\max}(d)=3m^2,
\]
we must have $s=0$, hence $e=m=d/2$. The conclusion then follows
from Remark~\ref{rmk:balanced-gieseker}.

Now suppose that $d=2m+1$ and write
\[
e=m+1+s,
\qquad s\geq0.
\]
The same bound gives
\[
j
\leq
3m(m+1)-s(s+2).
\]
Since \(
j=j_{\max}(d)=3m(m+1)\), we obtain $s=0$, so
\[
e=m+1=\frac{d+1}{2}.
\]

Set \(F\doteq\T_J(m+1)\). A direct Chern class computation gives
\[
c_1(F)=1,
\qquad
c_2(F)=1.
\]
Moreover, \(h^0(F)\neq0\) and \(h^0(F(-1))=0\). Thus a nonzero section of $F$ has no divisorial zero and determines
an exact sequence
\[
0\longrightarrow
\mathcal O_{\PP^2}
\longrightarrow
F
\longrightarrow
\mathcal I_p(1)
\longrightarrow0
\]
for some point $p\in\PP^2$. 

Since $F$ is locally free, this extension is non-split. On the other
hand,
\[
\Ext^1(\mathcal I_p(1),\mathcal O_{\PP^2})\simeq k,
\]
so there is, up to isomorphism, a unique nontrivial locally free
extension of this form. The bundle
$T_{\PP^2}(-1)$ realizes this extension. Hence \(F\simeq T_{\PP^2}(-1)\), and therefore
\[
\T_J
\simeq
T_{\PP^2}(-m-2)
=
T_{\PP^2}\left(-\frac{d+3}{2}\right).
\]
\end{proof}

\section{The quartic strata of fixed Bourbaki degree}\label{sec:quartic-strata}

In this section, we specialize to equigenerated triples with $d=3$,
with particular emphasis on the gradient locus $G_3$ arising from
quartic plane curves. In Theorem~\ref{thm:strata-d-3}, we prove that every nonempty stratum $Z_j$ is smooth and irreducible and determine its decomposition into initial-degree strata $I_e^j$.

Let \(X=V(F)\subseteq\PP^2\) be a reduced singular quartic. We assume throughout this section that
$X$ is not a cone, equivalently that the three partial derivatives
$F_x,F_y,F_z$ are $k$-linearly independent. Thus
\[
J_F=(F_x,F_y,F_z)
\]
is minimally generated by three cubic forms and has height two. The
excluded reduced cone is a union of four concurrent lines; its
gradient ideal is generated by only two cubic forms and therefore
does not belong to the three-generated setting considered here.

For $d=3$, \(\PP(W_3)=\PP^{29}\) and \(G_3\simeq\PP(R_4)=\PP^{14}\) by Proposition~\ref{prop:parameter-space-gradient}. We denote by
\[
B_{3,b}\subseteq G_3\simeq\PP^{14}
\]
the quartic locus of fixed Bourbaki degree $b$.
The numerical classification
\cite[Theorem~4.1]{JMNRS2026}, together with Wall's classification
of reduced singular quartics~\cite{Wall-quartics}, determines the
possible Bourbaki degrees in terms of the singularity configuration
and the initial Jacobian syzygy degree.

The only singularity configurations for which the Bourbaki degree is
not determined by the configuration alone are $3A_1$ and $A_3$;
in those cases the additional datum is
\[
e(X)=\indeg(\syz(J_F)).
\]

If the same ADE configuration occurs in different geometric realizations, we distinguish the corresponding families by superscripts
\[
\mathrm{irr},\qquad (3+1),\qquad (2+2),\qquad (2+1+1),\qquad (1+1+1+1),
\]
according to whether the quartic is irreducible, a cubic plus a line, two conics, a conic plus two lines, or four lines. Thus, for instance, $4A_1^{(2+2)}$ denotes the family of quartics which are unions of two conics meeting transversely in four points.

The following result describes the quartic Bourbaki strata. For all
but two singularity configurations, the Bourbaki degree is determined
by the configuration alone. The only ambiguity occurs for
\[
3A_1
\qquad\text{and}\qquad
A_3,
\]
for which the Bourbaki degree may be $4$ or $6$ according to the
initial degree \(e(X)=\indeg(\syz(J_F))\).

\begin{Theorem}\label{quartic-Bour-strata}
The quartic Bourbaki strata satisfy the following properties.

\begin{enumerate}
\item[\rm(i)] One has
\[
B_{3,5}=\varnothing,
\qquad
B_{3,b}\neq\varnothing \quad \text{for } b\in\{0,1,2,3,4,6,7,8\}.
\]

\item[\rm(ii)] For $b\in\{0,1,2,3,7,8\}$, the locus $B_{3,b}$ is a finite union of irreducible locally closed families, all of the same dimension. These families are the $\PGL_3$-saturations of the corresponding Wall normal-form families listed in Table~\ref{tab:quartic-Bour-strata}. In particular, all locally closed irreducible families appearing in $B_{3,b}$ have the dimension indicated in the table.

\item[\rm(iii)]
The union $B_{3,4}\cup B_{3,6}$ is supported on the four
$11$-dimensional equisingular families
\[
3A_1^{\mathrm{irr}},\qquad
3A_1^{(3+1)},\qquad
(A_1+A_2)^{\mathrm{irr}},\qquad
A_3^{\mathrm{irr}}.
\]
More precisely,
\[
B_{3,4}
=
\left\{
X:
\Sigma_X\in
\{3A_1^{\mathrm{irr}},
  3A_1^{(3+1)},
  A_3^{\mathrm{irr}}\},
\ e(X)=2
\right\},
\]
whereas
\[
B_{3,6}
=
\left\{
X:
\Sigma_X\in
\{3A_1^{\mathrm{irr}},A_3^{\mathrm{irr}}\},
\ e(X)=3
\right\}
\sqcup
(A_1+A_2)^{\mathrm{irr}}.
\]
In particular, the Bourbaki degree is not determined by the
singularity configuration only for the configurations
$3A_1$ and $A_3$.
\end{enumerate}
\end{Theorem}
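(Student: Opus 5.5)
We will work entirely with the Bourbaki degree formula \eqref{eq:Bourbaki-deg-formula} specialized to $d=3$:
\[
\Bour(J_F)=e(e-3)+9-\tau(F),
\]
where $\tau(F)=\deg V(J_F)$ is the global Tjurina number. The plan has four steps.

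\textbf{Step 1: admissible values.} For a reduced quartic that is not a cone, $\tau(F)$ is the sum of the Tjurina numbers of its singular points, and $1\le\tau\le 7$ by du Plessis--Wall. We also have $e\in\{1,2,3\}$:
\begin{itemize}
\item $e=1$ forces freeness or nearly-freeness by the numerical classification \cite[Theorem~4.1]{JMNRS2026};
\item $e=2$ gives $b=7-\tau$;
\item $e=3$ gives $b=9-\tau$.
\end{itemize}
Combining this with the refined bound of Proposition~\ref{prop:semistable-refined-bound}, which for $d=3$, $e=2$ gives $\tau\le 5$ and for $e=3$ gives $\tau\le 3$, we obtain $b\ge 2$ when $e=2$ and $b\ge 6$ when $e=3$. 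For $e=1$ we get $b=7-\tau$ with $\tau\in\{6,7\}$, so $b\in\{0,1\}$.

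\textbf{Step 2: which configurations realize which $b$.} The value $b=5$ could only arise from $e=2,\tau=2$ or $e=3,\tau=4$. The second is excluded by the bound above. For the first, $\tau=2$ means $2A_1$ or $A_2$, and we need to show that $e=3$ for these (the generic value for small $\tau$). We will take this from \cite[Theorem~4.1]{JMNRS2026}, or check it directly: a quadratic syzygy forces $\tau\ge 3$ by the lower bound $d(d-e)=3$ in the admissible range. This settles $B_{3,5}=\varnothing$.

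The remaining values then split as follows:
\begin{itemize}
\item $b=7,8$ come from $\tau=2,1$ with $e=3$;
\item $b=6$ comes from $\tau=3$, $e=3$;
\item $b=4$ comes from $\tau=3$, $e=2$;
\item $b=3,2$ come from $\tau=4,5$ with $e=2$, using that $e=3$ is impossible for $\tau\ge 4$;
\item $b=1,0$ come from $\tau=6,7$ (nearly free and free respectively).
\end{itemize}

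\textbf{Step 3: ambiguity and dimensions.} Using Wall's list \cite{Wall-quartics}, we group the configurations by $\tau$. When $\tau\ne 3$, $e$ is forced by Step~2, so $b$ depends only on $\Sigma_X$. This gives (ii). Each stratum is a $\PGL_3$-saturation of a Wall normal-form family, so its dimension is $8$ plus the number of moduli minus the stabilizer dimension, read off from the table. For equisingular families the count is $14-\tau$ (the codimension equals the total Tjurina number for versal plane quartics), which gives equal dimensions within each $b$.

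The configurations with $\tau=3$ are $3A_1$ (irreducible, cubic plus line, or conic plus line arrangements, depending on realization), $A_1+A_2$ and $A_3$. For each we must decide whether $e=2$ occurs (giving $b=4$), whether $e=3$ occurs (giving $b=6$), or both.

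\textbf{Step 4: the main obstacle.} The hard step is this case-by-case realization in (iii). The plan is to exhibit explicit normal forms and compute $\syz(J_F)_2$ directly:
\begin{itemize}
\item for $3A_1^{\mathrm{irr}}$ and $A_3^{\mathrm{irr}}$, find both a member with a quadratic syzygy and a general member without one, using lower semicontinuity (Lemma~\ref{lem:indeg-lowersemicontinuous}) so that $e=3$ holds on a dense open subset;
\item for $3A_1^{(3+1)}$, show that a quadratic syzygy always exists, presumably coming from the line component;
\item for $(A_1+A_2)^{\mathrm{irr}}$, show that none exists.
\end{itemize}
The dimension $11=14-3$ follows as in Step~3.
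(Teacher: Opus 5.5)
Your derivation of $B_{3,5}=\varnothing$ is a more self-contained route than the paper's, which simply quotes \cite[Theorem~4.1]{JMNRS2026} for the admissible Bourbaki values. You instead combine $\Bour=e(e-3)+9-\tau$, the lower bound $d(d-e)\le\tau$, and Proposition~\ref{prop:semistable-refined-bound}, and that part works.

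There is an arithmetic slip, however. For $(d,e)=(3,2)$ the refined bound is $\frac{d(d+1)}{2}+(d-1)e-e^2=6$, not $5$. So $e=2$ allows $\tau=6$ and $b=1$, and your claims ``$b\ge2$ when $e=2$'' and ``$e$ is forced when $\tau\neq3$'' are false. The paper's own $E_6$ family $x^4+tx^2y^2+y^3z$ with $t\neq0$ has $e=2$, $\tau=6$, $\Bour=1$. Part (ii) survives only because $b=7-\tau$ for both $e=1$ and $e=2$, and you should argue it that way. Separately, a conic plus two lines has $\tau\ge5$, so $3A_1$ occurs only as $3A_1^{\mathrm{irr}}$ and $3A_1^{(3+1)}$.

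The genuine gap is Step~4. It is only a plan, yet it carries all of (iii) and the nonemptiness of $B_{3,4}$ and $B_{3,6}$ in (i); the paper obtains these from \cite[Theorem~4.1]{JMNRS2026}. Worse, one sub-goal is impossible: no irreducible $3$-nodal quartic has a quadratic syzygy.

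What you are missing is a criterion for $e=2$. For $d=3$ one has $\T_J^\vee\simeq\T_J(3)$. Serre duality together with $0\to\T_J(-2)\to\mathcal O_{\PP^2}(-2)^{\oplus3}\to\mathcal I_Z(1)\to0$ then gives
\[
\syz(J)_2\simeq H^0(\T_J(2))\simeq H^2(\T_J(-2))^\vee\simeq H^1(\mathcal I_Z(1))^\vee .
\]
For $j=3$ one has $h^1(\mathcal I_Z(1))=h^0(\mathcal I_Z(1))$. Hence, when $\tau=3$, $e=2$ if and only if the length-three Tjurina scheme lies on a line. This settles each family:
\begin{itemize}
\item[(a)] $3A_1^{\mathrm{irr}}$: never, by B\'ezout, since a line through the three nodes would meet $X$ with multiplicity at least $6$.
\item[(b)] $(A_1+A_2)^{\mathrm{irr}}$: never, since the cuspidal tangent through the node would meet $X$ with multiplicity at least $5$.
\item[(c)] $3A_1^{(3+1)}$: always. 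If $F=xC$, then $(0,C_z,-C_y)$ is exactly the syzygy you anticipated.
\item[(d)] $A_3^{\mathrm{irr}}$: write $F=y^2z^2+y\,q(x,y)z+p(x,y)$ with tacnode $[0:0:1]$ and tangent $y=0$. The Tjurina ideal is locally $(2y+q_0x^2+\cdots,\,x^3)$, so $e=2$ exactly when the $x^2yz$ coefficient $q_0$ vanishes. An example is $y^2z^2+x^4+y^4$, with syzygy $(0,\,yz,\,-(z^2+2y^2))$. The general member has $e=3$.
\end{itemize}

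Thus the ambiguity for $3A_1$ comes from $\mathrm{irr}$ versus $(3+1)$, and the set of $3A_1^{\mathrm{irr}}$ quartics with $e=2$ in the statement is empty. That is harmless for the set equality, but your plan to exhibit such a member would fail. With this criterion, or the citation, Step~4 closes.
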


\begin{table}[ht]
\caption{Quartic strata of fixed Bourbaki degree for the non-exceptional values.}
\centering
\renewcommand{\arraystretch}{1.15}
{\small
\begin{tabular}{c|c|p{9.5cm}}
\hline
$b$ & dimension & Irreducible locally closed families in $B_{3,b}$ \\
\hline
$8$ & $13$ &
$A_1^{\mathrm{irr}}$ \\[0.4ex]
\hline
$7$ & $12$ &
$2A_1^{\mathrm{irr}}$,\ $A_2^{\mathrm{irr}}$ \\[0.4ex]
\hline
$3$ & $10$ &
$(2A_1+A_2)^{\mathrm{irr}}$,\ $(A_1+A_3)^{\mathrm{irr}}$,\ $D_4^{\mathrm{irr}}$,\ $2A_2^{\mathrm{irr}}$,\ $A_4^{\mathrm{irr}}$,\ $(A_1+A_3)^{(3+1)}$,\ $4A_1^{(3+1)}$,\ $4A_1^{(2+2)}$ \\[0.4ex]
\hline
$2$ & $9$ &
$(A_1+2A_2)^{\mathrm{irr}}$,\ $(A_2+A_3)^{\mathrm{irr}}$,\ $(A_1+A_4)^{\mathrm{irr}}$,\ $A_5^{\mathrm{irr}}$,\ $D_5^{\mathrm{irr}}$,\ $A_5^{(3+1)}$,\ $(2A_1+A_3)^{(3+1)}$,\ $(2A_1+A_3)^{(2+2)}$,\ $(A_1+D_4)^{(3+1)}$,\ $(3A_1+A_2)^{(3+1)}$,\ $(A_1+D_4)^{(2+1+1)}$,\ $5A_1^{(2+1+1)}$ \\[0.4ex]
\hline
$1$ & $8$ &
$3A_2^{\mathrm{irr}}$,\ $(A_2+A_4)^{\mathrm{irr}}$,\ $A_6^{\mathrm{irr}}$,\ $E_6^{\mathrm{irr}}$,\ $(A_1+A_5)^{(3+1)}$,\ $(A_1+A_2+A_3)^{(3+1)}$,\ $(A_1+D_5)^{(3+1)}$,\ $D_6^{(3+1)}$,\ $2A_3^{(2+2)}$,\ $(A_1+A_5)^{(2+2)}$,\ $(3A_1+A_3)^{(2+1+1)}$,\ $(2A_1+D_4)^{(2+1+1)}$,\ $6A_1^{(1+1+1+1)}$ \\[0.4ex]
\hline
$0$ & $7$ &
$(A_2+A_5)^{(3+1)}$,\ $A_7^{(2+2)}$,\ $(A_1+2A_3)^{(2+1+1)}$,\ $(A_1+D_6)^{(2+1+1)}$,\ $(3A_1+D_4)^{(1+1+1+1)}$,\ $E_7^{(3+1)}$ \\[0.4ex]
\hline
\end{tabular}}
\label{tab:quartic-Bour-strata}
\end{table}

\begin{proof}
By Wall's classification of reduced singular quartic curves, refined
by decomposition type in the reducible case, the quartic locus is
decomposed into finitely many normal-form families. Each such family
is irreducible, and its $\PGL_3$-saturation is again irreducible.
For every family appearing in the classification, the explicit
normal-form parameter count gives dimension
\[
14-\tau(\Sigma),
\]
where $\tau(\Sigma)$ denotes the sum of the local Tjurina numbers of
the singularities occurring in the configuration $\Sigma$.
Since all singularities involved are simple, this agrees with the
expected equianalytic codimension. We emphasize that the global
dimension statement here comes from Wall's normal-form
classification, rather than merely from the absence of local analytic
moduli.

\cite[Theorem~4.1]{JMNRS2026} shows that the only possible Bourbaki values are
\[
0,1,2,3,4,6,7,8,
\]
and that the value $5$ never occurs. This proves \rm(i).

Let $b\in\{0,1,2,3,7,8\}$. For these values, \cite[Theorem~4.1]{JMNRS2026} shows that $\Bour(X)=b$ is determined by the singularity configuration. Hence $B_{3,b}$ is exactly the union of the equisingular families with the configurations listed in Table~\ref{tab:quartic-Bour-strata}. In each row of the table, all configurations have the same total Tjurina number. Therefore, all irreducible locally closed families appearing in $B_{3,b}$ have the same dimension $14-\tau(\Sigma)$, namely the value indicated in the second column. This proves \rm(ii).

Finally, \cite[Theorem~4.1]{JMNRS2026} shows that the only
singularity configurations for which the Bourbaki degree is not
determined by the singularity type are
\[
3A_1
\qquad\text{and}\qquad
A_3.
\]
The configuration $A_1+A_2$, on the other hand, necessarily has
initial degree $e=3$, and hence Bourbaki degree $6$.

Refining by decomposition type gives the families appearing in
part~(iii). Since all of them have total Tjurina number $3$, the
Bourbaki formula becomes
\[
\Bour(X)=e^2-3e+6,
\]
so that
\[
\Bour(X)=4 \quad\text{for }e=2,
\qquad
\Bour(X)=6 \quad\text{for }e=3.
\]
The refined quartic classification then gives precisely the
decomposition of $B_{3,4}$ and $B_{3,6}$ stated above.
\end{proof}

\begin{Remark}
The stratum $B_{3,8}$ is the dense open stratum in the discriminant
of quartic plane curves and corresponds to quartics with exactly one node. The value $5$ is a genuine gap in the quartic Bourbaki spectrum.
The only singularity configurations for which the Bourbaki degree is
not determined by the configuration alone are $3A_1$ and $A_3$; in
these cases one must additionally specify the initial degree of the
Jacobian syzygy module.
\end{Remark}

\begin{Lemma}\label{lem:d-3-generic-smoothness}
For every $0\leq j\leq7$, the scheme $Z_j$ is smooth. Moreover,
the morphism
\[
\Phi_j:Z_j\longrightarrow\Hilbert^j(\PP^2)
\]
is smooth.
\end{Lemma}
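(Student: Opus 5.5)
By Lemma~\ref{lem:obstruction-cohomology}(a), it suffices to show that $H^1(\mathcal I_Z(3))=0$ for every $[J]\in Z_j$ with $0\leq j\leq 7$, where $Z=V(J)$. That lemma then gives smoothness of $Z_j$ and of $\Phi_j$ at $[J]$ simultaneously. By Lemma~\ref{lem:obstruction-cohomology}(b), we have $H^1(\mathcal I_Z(3))\simeq H^0(\T_J)^*$. The plan is therefore to show $H^0(\T_J)=0$, i.e.\ $\syz(J)_0=0$ in the paper's grading, for every point of $U_3$.

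This vanishing is exactly condition~(b) of Section~\ref{sec:framework}: the three cubics are $k$-linearly independent. Indeed, $H^0(\T_J)=\ker\bigl(k^3\xrightarrow{(f_1,f_2,f_3)}R_3\bigr)=\syz(J)_0$, and this kernel is zero for points of $U_3$. Hence $H^1(\mathcal I_Z(3))=0$ holds on all of $U_3$, and in particular on every $Z_j$.

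The main point to watch is the grading. Lemma~\ref{lem:obstruction-cohomology}(b) twists by $d-3$, which equals $0$ when $d=3$. So the criterion ``$e\geq d-2=1$'' is automatic, because every point of $U_d$ has $e\geq1$. As a sanity check independent of the syzygy interpretation, one can use Proposition~\ref{prop:Z-smooth-irr-low-j}: the residual-line induction there gives $H^1(\mathcal I_Z(m))=0$ for $m\geq j-1$, which covers $j\leq 4$ but not $j$ up to $7$. The syzygy argument above is what handles $j=5,6,7$, so no case split by $j$ is needed.

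Finally, we record that $j\leq 7$ is only used to restrict to the nonempty strata. By the bound $j\leq d(d-e)+e^2$ with $e\geq1$, we get $j\leq 7$ for $d=3$, so the argument covers all strata.
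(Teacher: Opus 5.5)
Your proof is correct, but it establishes the key vanishing $H^1(\mathcal I_Z(3))=0$ differently from the paper.

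The paper argues geometrically. A line meeting $Z$ in length $\geq 4$ would divide all three cubics, so $Z$ has no $4$-secant line. Since $\deg Z\leq 2\cdot 3+1$, the postulation lemma \cite[Lemma~34]{BernardiGimiglianoIda2011} then shows that $H^1(\mathcal I_Z(3))\neq 0$ would force a $5$-secant line, a contradiction. This is where the hypothesis $j\leq 7$ enters.

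You instead apply part~(b) of Lemma~\ref{lem:obstruction-cohomology}, namely $H^1(\mathcal I_Z(3))\simeq H^0(\T_J)^\vee=(\syz(J)_0)^\vee$, which vanishes by minimal generation alone. This is just the case $d=3$ of the criterion $e\geq d-2$ already recorded in that lemma. Your route is shorter, stays inside the paper, and avoids the external postulation result. It also proves the vanishing at every point of $U_3$, so the hypothesis $j\leq 7$ plays no role in your argument. The paper's route, in return, yields the extra geometric fact that $Z$ has no $4$-secant line.

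One correction to your final paragraph, which lies outside the logical path of the proof. For $d=3$ the bound $j\leq d(d-e)+e^2$ gives only $j\leq 9$ when $e=3$, so by itself it does not exclude $j=8,9$. Excluding them requires either the refined bound of Proposition~\ref{prop:semistable-refined-bound}, which gives $j\leq 3$ for $e=3$, or the Cayley--Bacharach argument in the proof of Theorem~\ref{thm:strata-d-3}. Since your vanishing holds on all of $U_3$, this does not affect your proof of the lemma.
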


\begin{proof}
Let $[J]=[f_1,f_2,f_3]\in Z_j$, and let \(Z=V(J)\subset\PP^2\). We first observe that
\begin{equation}\label{eq:no-four-secant-cubic-base}
\deg(Z\cap L)\leq3
\end{equation}
for every line $L\subset\PP^2$. Indeed, suppose that \(
\deg(Z\cap L)\geq4\). For each $i=1,2,3$, the restriction
\[
\restr {f_i} {L}\in H^0(L,\mathcal O_L(3))
\]
vanishes on the zero-dimensional scheme $Z\cap L$ of length at
least four. Since a nonzero section of
$\mathcal O_{\PP^1}(3)$ has a zero divisor of degree $3$, it follows
that
\[
\restr {f_i} {L}=0.
\]
Hence the equation of $L$ divides each of $f_1,f_2,f_3$, contradicting
the assumption that the triple has no common divisorial factor.

Now suppose, by contradiction, that \(
H^1(\mathcal I_Z(3))\neq0\). Since
\[
\deg Z=j\leq7=2\cdot3+1,
\]
the postulation lemma
\cite[Lemma~34]{BernardiGimiglianoIda2011} implies that there exists a line $L\subset\PP^2$ such that
\[
\deg(Z\cap L)\geq3+2=5.
\]
This contradicts \eqref{eq:no-four-secant-cubic-base}. Therefore \(
H^1(\mathcal I_Z(3))=0\) for every $[J]\in Z_j$ and every $0\leq j\leq7$. Lemma~\ref{lem:obstruction-cohomology} now gives both the smoothness
of $Z_j$ and the smoothness of
\[
\Phi_j:Z_j\longrightarrow\Hilbert^j(\PP^2).
\]
\end{proof}

\begin{Corollary}\label{cor:d3-Zj-smooth-irreducible}
For every $0\leq j\leq7$, the scheme $Z_j$ is smooth and
irreducible. Moreover,
\[
\dim Z_j=29-j.
\]
\end{Corollary}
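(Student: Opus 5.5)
The plan is to combine Lemma~\ref{lem:d-3-generic-smoothness} with Lemma~\ref{lem:irreducibility-from-Hilbert-map} and Lemma~\ref{lem:dimension-Z-j-smooth-points}. This corollary comes before Theorem~D's assertion that $Z_0,\ldots,Z_7$ are precisely the nonempty strata, so I will read it as a statement about nonempty strata. Alternatively, I will exhibit one triple in each $Z_j$; for instance, gradient triples of quartics with total Tjurina number $j$ from Table~\ref{tab:quartic-Bour-strata} realize $j=1,\dots,7$, and a general triple gives $j=0$.

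For \emph{smoothness}, fix $0\le j\le 7$ and assume $Z_j\neq\emptyset$. Lemma~\ref{lem:d-3-generic-smoothness} shows that $Z_j$ is smooth and that $\Phi_j\colon Z_j\to\Hilbert^j(\PP^2)$ is a smooth morphism.

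For \emph{irreducibility}, Lemma~\ref{lem:irreducibility-from-Hilbert-map} applies directly, since $Z_j$ is nonempty and $\Phi_j$ is smooth. Its proof uses only that $\Phi_j$ is open, that the nonempty fibers are open subsets of $\PP(H^0(\mathcal I_Z(3))^{\oplus3})$ (Theorem~\ref{thm:hilb-map-j}), and that $\Hilbert^j(\PP^2)$ is irreducible (Fogarty). It follows that $Z_j$ is irreducible. For $j=0$ the statement is simpler: $Z_0$ is open in $\PP(W_3)$, hence irreducible of dimension $29$.

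For the \emph{dimension}, the proof of Lemma~\ref{lem:d-3-generic-smoothness} actually shows $H^1(\mathcal I_Z(3))=0$ at every point. The long exact sequence used in Lemma~\ref{lem:obstruction-cohomology} then gives $\Ext^1(\T_J,\mathcal I_Z(3))=0$, because $\Ext^2(\mathcal I_Z,\mathcal I_Z)=0$. So every point of $Z_j$ is unobstructed, and Lemma~\ref{lem:dimension-Z-j-smooth-points} applies with $d=3$:
\[
\dim Z_j=\tfrac32\cdot 9+\tfrac92\cdot 3+2-j=29-j .
\]
As a cross-check I will also compute the dimension via $\Phi_j$. The image of $\Phi_j$ is open dense of dimension $2j$, and each fiber has dimension $3h^0(\mathcal I_Z(3))-1=3(10-j)-1$. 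The total is $2j+29-3j=29-j$, in agreement.

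The main obstacle is nonemptiness for each $j\le 7$, which the statement implicitly needs. The first attempt is to cite the quartic table. Failing that, for general $j$ points with $j\le 7$ I will take $Z$ in general position (no four on a line) and use $H^1(\mathcal I_Z(3))=0$ to check that $\mathcal I_Z(3)$ is generated by three general cubics with no common factor. This last check is the delicate part, and I would settle it with explicit monomial examples if needed.
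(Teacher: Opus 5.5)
Your proposal is correct and follows the paper's route. Smoothness of $Z_j$ and $\Phi_j$ comes from Lemma~\ref{lem:d-3-generic-smoothness}, nonemptiness from smooth quartics and Theorem~\ref{quartic-Bour-strata}, irreducibility from Lemma~\ref{lem:irreducibility-from-Hilbert-map}, and the dimension from Lemma~\ref{lem:dimension-Z-j-smooth-points}; you also usefully spell out that $\Ext^1(\T_J,\mathcal I_Z(3))=0$ at every point, which the paper leaves implicit. One small citation fix: $j=3$ is not covered by Table~\ref{tab:quartic-Bour-strata} but by part~(iii) of Theorem~\ref{quartic-Bour-strata}, namely the $3A_1$, $A_1+A_2$ and $A_3$ families.
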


\begin{proof}
By Lemma~\ref{lem:d-3-generic-smoothness}, every $Z_j$ is smooth and $\Phi_j$ is smooth. For $j=0$, nonemptiness follows from the
existence of smooth quartics, while for $1\leq j\leq7$ it follows
from Theorem~\ref{quartic-Bour-strata}.

Hence Lemma~\ref{lem:irreducibility-from-Hilbert-map} shows that
$Z_j$ is irreducible for every $0\leq j\leq7$. Finally,
Lemma~\ref{lem:dimension-Z-j-smooth-points} gives
\[
\dim Z_j
=
3h^0(\mathcal O_{\PP^2}(3))-j-1
=
29-j.
\]
\end{proof}

\begin{Theorem}\label{thm:strata-d-3}
For $d=3$, the nonempty strata are precisely
\[
Z_0,\ldots,Z_7.
\]
Each $Z_j$ is smooth and irreducible of dimension \(\dim Z_j=29-j\).

Moreover, for every $0\leq j\leq7$, the gradient locus $G_3$ meets
$Z_j$ properly in $\PP(W_3)$:
\[
\dim(G_3\cap Z_j)
=
14-j
=
\dim G_3+\dim Z_j-\dim\PP(W_3).
\]
The decomposition of $Z_j$ into initial-degree strata and the
corresponding quartic gradient loci are given in
Table~\ref{table:geom-strata-d-3}.
\begin{table}[ht]
\caption{Geometry of the strata of $3$-equigenerated ideals.}
\label{table:geom-strata-d-3}
\centering
\begin{tabular}{|c|l|c|c|l|}
\hline
$j$ & strata & smooth? & irreducible? & gradient quartics\\
\hline
$0$ & $I_3^0$
    & yes & yes & smooth quartics\\
\hline
$1$ & $I_3^1$
    & yes & yes & $B_{3,8}=A_1$\\
\hline
$2$ & $I_3^2$
    & yes & yes & $B_{3,7}=2A_1\sqcup A_2$\\
\hline
$3$ & $I_2^3\sqcup I_3^3$, with $I_3^3$ dense open
    & yes & yes & $B_{3,4}\sqcup B_{3,6}$\\
\hline
$4$ & $I_2^4$
    & yes & yes & $B_{3,3}$\\
\hline
$5$ & $I_2^5$
    & yes & yes & $B_{3,2}$\\
\hline
$6$ & $I_1^6\sqcup I_2^6$, with $I_2^6$ dense open
    & yes & yes & $B_{3,1}$\\
\hline
$7$ & $I_1^7$
    & yes & yes & $B_{3,0}$ (free quartics)\\
\hline
$\geq8$ & $\varnothing$
    & -- & -- & --\\
\hline
\end{tabular}
\end{table}
\end{Theorem}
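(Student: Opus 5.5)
The plan is to combine three inputs: the numerical bounds of Section~\ref{sec:semistable-syzygies} together with the bounds $d(d-e)\leq j\leq d(d-e)+e^2$ recalled in Section~\ref{sec:framework}; Corollary~\ref{cor:d3-Zj-smooth-irreducible} for the geometry; and Theorem~\ref{quartic-Bour-strata} (Wall's classification) for the gradient loci.

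\emph{Emptiness for $j\geq 8$.} For $e=1$ the bounds give $j\leq 3\cdot 2+1=7$. For $e\in\{2,3\}$ we have $e\geq\lceil 3/2\rceil$, so Proposition~\ref{prop:semistable-refined-bound} applies and gives $j\leq 6+2e-e^2$, which equals $6$ for $e=2$ and $3$ for $e=3$. Hence $Z_j=\varnothing$ for $j\geq 8$. For $0\leq j\leq 7$, nonemptiness, smoothness, irreducibility and $\dim Z_j=29-j$ are exactly Corollary~\ref{cor:d3-Zj-smooth-irreducible}.

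\emph{Initial-degree strata.} The same inequalities decide which $e$ can occur for each $j$.
\begin{itemize}
\item If $j\leq 2$, then $e\leq 2$ would force $j\geq 3(3-e)\geq 3$, so $e=3$.
\item If $j=3$, the value $e=1$ is excluded since it needs $j\geq 6$, so $e\in\{2,3\}$.
\item If $j\in\{4,5\}$, then $e=3$ is excluded ($j\leq 3$) and $e=1$ is excluded ($j\geq 6$), so $e=2$.
\item If $j=6$, then $e\in\{1,2\}$.
\item If $j=7$, then $e=2$ is excluded by the refined bound ($j\leq 6$), so $e=1$.
\end{itemize}
Whenever two values of $e$ are possible, Proposition~\ref{prop:generic-initial-degree} (using irreducibility of $Z_j$) shows that the maximal one gives the dense open stratum. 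It remains to see that both strata are nonempty in those cases.
\begin{itemize}
\item $I_2^3\neq\varnothing$ follows from $B_{3,4}\neq\varnothing$ in Theorem~\ref{quartic-Bour-strata}, since those quartics have $e=2$ and $\tau=3$.
\item $I_3^3\neq\varnothing$ follows from $(A_1+A_2)^{\mathrm{irr}}$.
\item $I_1^6\neq\varnothing$ is witnessed by the point $\gamma(0)$ of the example following Proposition~\ref{prop:generic-initial-degree}, which is a cubic triple with $j=6$ and $e=1$.
\item $I_2^6\neq\varnothing$ is witnessed by $\gamma(t)$ for $t\neq 0$ in the same example.
\end{itemize}
This last nonemptiness check is the step I expect to need the most care; if needed I would instead exhibit explicit gradient triples from Wall's normal forms.

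\emph{Gradient column and properness.} A point of $G_3\cap U_3$ is $[\nabla F]$ with $F$ having no repeated factor (otherwise the partials share a factor) and not a cone (by linear independence). Hence $G_3\cap Z_j$ is the locus of reduced quartics with $\tau(F)=j$. The Bourbaki formula $\Bour=e^2-3e+9-j$ then translates each $I_e^j\cap G_3$ into the loci $B_{3,b}$ of the table: $j=0,1,2$ give $b=9-j$ with $e=3$; $j=3$ gives $b=4$ or $6$; $j=4,5$ give $b=3,2$; $j=6$ gives $b=1$ for both $e=1,2$; $j=7$ gives $b=0$. Finally, every irreducible component of $G_3\cap Z_j$ has dimension at least $\dim G_3+\dim Z_j-\dim\PP(W_3)=14+(29-j)-29=14-j$, by the intersection inequality used in Proposition~\ref{prop:dimension-bound-free-families}. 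Conversely, Theorem~\ref{quartic-Bour-strata} and its proof show that every Wall family with total Tjurina number $j$ has dimension $14-j$, and there are finitely many such families. Hence $\dim(G_3\cap Z_j)=14-j$ and the intersection is proper. For $j=0$ this is simply the open set of smooth quartics, of dimension $14$.
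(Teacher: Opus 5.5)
Your proof is correct, and for most of the statement it follows the paper's proof step by step:
\begin{itemize}
\item smoothness, irreducibility and $\dim Z_j=29-j$ come from Corollary~\ref{cor:d3-Zj-smooth-irreducible};
\item the list of strata $I_e^j$ comes from $d(d-e)\leq j\leq d(d-e)+e^2$ together with Proposition~\ref{prop:semistable-refined-bound};
\item density of the top stratum comes from Proposition~\ref{prop:generic-initial-degree};
\item the witnesses for the mixed strata are the same: $B_{3,4}$, $B_{3,6}$, and the family $J_t$ with $J_0=(4x^3,3y^2z,y^3)$;
\item $\dim(G_3\cap Z_j)=14-j$ comes from Wall's classification via Theorem~\ref{quartic-Bour-strata}.
\end{itemize}

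The genuinely different step is the emptiness of $Z_j$ for $j\geq 8$.

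\emph{Your route.} You read emptiness off the list of admissible pairs $(e,j)$. Here the refined bound is essential. For $e=3$ the basic bound $j\leq d(d-e)+e^2=9$ would still allow $j=8,9$. Only the vanishing $H^0(\T_J(2))=H^2(\T_J(2))=0$, which forces $\chi(\T_J(2))=j-3\leq 0$, removes these values.

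\emph{The paper's route.} The paper argues directly on the net $V=\langle f_1,f_2,f_3\rangle$. Two general members cut out a $(3,3)$ complete intersection $\Gamma\supseteq Z$ of length $9$. If $\deg Z\in\{8,9\}$, then (using Cayley--Bacharach when $\deg Z=8$) $H^0(\mathcal I_Z(3))=H^0(\mathcal I_\Gamma(3))$ is two-dimensional. This contradicts $\dim V=3$.

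\emph{Comparison.} The paper's argument is elementary and independent of the stability machinery of Section~\ref{sec:semistable-syzygies}. Yours is shorter, and it makes explicit that the vanishing is already encoded in the numerical table of admissible $(e,j)$, which the paper derives anyway.

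Two small remarks.
\begin{enumerate}
\item Your intersection-inequality lower bound on the components of $G_3\cap Z_j$ is harmless but redundant, since Wall's families already have dimension exactly $14-j$.
\item To \emph{identify} $G_3\cap Z_j$ with the listed $B_{3,b}$, rather than only show inclusion, add the converse. The same list of admissible $(e,j)$ shows that each Bourbaki value $b$ in the table comes from a single $j$.
\end{enumerate}
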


\begin{proof}
By Corollary~\ref{cor:d3-Zj-smooth-irreducible}, for every \(0\leq j\leq7\), the scheme $Z_j$ is smooth and irreducible, with \(
\dim Z_j=29-j\).

We first determine the possible initial degrees. Since $d=3$, the
general numerical bounds give
\[
\begin{array}{c|c}
e & \text{possible values of }j\\
\hline
1 & 6\leq j\leq7,\\
2 & 3\leq j\leq6,\\
3 & 0\leq j\leq3.
\end{array}
\]
For $e=2,3$, Proposition~\ref{prop:semistable-refined-bound}
specializes to the ranges displayed above. Combining these with the
basic Bourbaki bounds gives exactly the decompositions
\[
\begin{aligned}
Z_0&=I_3^0,\\
Z_1&=I_3^1,\\
Z_2&=I_3^2,\\
Z_3&=I_2^3\sqcup I_3^3,\\
Z_4&=I_2^4,\\
Z_5&=I_2^5,\\
Z_6&=I_1^6\sqcup I_2^6,\\
Z_7&=I_1^7.
\end{aligned}
\]

The required nonemptiness is also explicit. Smooth quartics give
$Z_0\neq\emptyset$, while Theorem~\ref{quartic-Bour-strata} gives
gradient examples for every $1\leq j\leq7$. In the mixed stratum
$Z_3$, the loci $B_{3,4}$ and $B_{3,6}$ realize respectively the
initial degrees $e=2$ and $e=3$. In the mixed stratum $Z_6$,
Example~\ref{ex:subscheme-flat-and-quot-flat} realizes both
$I_1^6$ and $I_2^6$.

Since $Z_3$ and $Z_6$ are irreducible, Proposition~
\ref{prop:generic-initial-degree} shows that the strata of maximal
initial degree, \(I_3^3\subset Z_3\) and \(I_2^6\subset Z_6\) are dense open subsets.

We next show that no further $Z_j$ occur. Suppose that $[J]=[f_1,f_2,f_3] \in U_3 \subset \mathbb{P}(W_3)$ has zero-dimensional base scheme $Z$, and let
\[
V=\langle f_1,f_2,f_3\rangle\subset H^0(\mathcal O_{\PP^2}(3)).
\]
Two general members $F,G\in V$ have no common component, and hence
define a complete intersection
\[
\Gamma=V(F,G)
\]
of type $(3,3)$ and length $9$. Since $Z\subseteq\Gamma$, one has
$\length(Z)\leq9$.

If $\length(Z)=9$, then $Z=\Gamma$, and therefore
\[
V\subseteq H^0(\mathcal I_\Gamma(3))
=
\langle F,G\rangle,
\]
contradicting $\dim V=3$. If $\length(Z)=8$, the residual scheme of
$Z$ in $\Gamma$ has length one. By the Cayley--Bacharach theorem for
the $(3,3)$ complete intersection $\Gamma$, every cubic through $Z$
also passes through the residual point. Hence again
\[
H^0(\mathcal I_Z(3))
=
H^0(\mathcal I_\Gamma(3))
=
\langle F,G\rangle,
\]
contradicting the fact that it contains the three-dimensional space
$V$. Thus \(Z_j=\varnothing\) for \(j\geq8\).

It remains to describe the intersection with the gradient locus.
A point of $G_3\cap U_3$ corresponds to a reduced non-cone quartic
$X=V(F)$, and
\[
\deg V(J_F)=\tau(F).
\]
Hence $G_3\cap Z_j$ is precisely the locus of reduced non-cone
quartics of total Tjurina number $j$.

For $j=0$, this is the open locus of smooth quartics in \(\PP(R_4)\simeq\PP^{14}\),
and therefore has dimension $14$. For $1\leq j\leq7$,
Theorem~\ref{quartic-Bour-strata} together with Wall's classification
shows that every irreducible locally closed family occurring in
$G_3\cap Z_j$ has dimension
\(14-j\). Thus, for every $0\leq j\leq7$,
\[
\dim(G_3\cap Z_j)=14-j.
\]

Since
\[
\dim G_3=14,
\qquad
\dim Z_j=29-j,
\qquad
\dim\PP(W_3)=29,
\]
we have
\[
14-j
=
\dim G_3+\dim Z_j-\dim\PP(W_3).
\]
Therefore $G_3$ and $Z_j$ meet properly for every $0\leq j\leq7$.
\end{proof}

\textbf{Declaration of generative AI and AI-assisted technologies in the manuscript preparation process}

During the preparation of this work, the author used Chatgpt for revision organization. The author reviewed and edited the output as needed and take full responsibility for the content of the published article. 



\end{document}